\newcommand{\Kcal}{\mathcal{K}}
\newcommand{\I}{\textrm{I}}
\newcommand{\II}{\textrm{II}}
\newcommand{\corner}[1]{\,\tikz[baseline=(todotted.base)]{
        \node[inner sep=1pt,outer sep=0pt] (todotted) {$#1$};
        \draw (todotted.north west) -- (todotted.north east);
        \draw (todotted.south west) -- (todotted.north west);
    }}
\titleformat{\subsubsection}[runin]{\normalfont}{\thesubsubsection}{0pt}{}[.]
\renewcommand{\thesubsubsection}{\arabic{section}.\arabic{subsection}.\arabic{subsubsection}}
\theoremstyle{plain}
\newtheorem{theorem}{Theorem}[subsection]
\newtheorem{lemma}[theorem]{Lemma}
\newtheorem{prop}[theorem]{Proposition}
\newtheorem*{prop*}{Proposition}
\newtheorem{cor}[theorem]{Corollary}
\theoremstyle{definition}
\newtheorem{definition}[theorem]{Definition}
\newtheorem{remark}[theorem]{Remark}
\newtheorem{notation}[theorem]{Notation}
\theoremstyle{remark}
\newtheorem{npar}[theorem]{\unskip}
\DeclareMathOperator{\Sd}{\texttt{Sd}}
\DeclareMathOperator{\Ex}{\texttt{Ex}}
\DeclareMathOperator*{\colim}{Colim}
\DeclareMathOperator*{\fprod}{\times}
\DeclareMathOperator{\Hom}{Hom}
\begin{document}

\pagestyle{plain}
\title{A constructive account of the Kan-Quillen model structure and of Kan's $\Ex^{\infty}$ functor}
\date{}

\author{Simon Henry}

\maketitle

\begin{abstract}
We give a fully constructive proof that there is a proper cartesian $\omega$-combinatorial model structure on the category of simplicial sets, whose generating cofibrations and trivial cofibrations are the usual boundary inclusion and horn inclusion. The main difference with classical mathematics is that constructively not all monomorphisms are cofibrations (only those satisfying some decidability conditions) and not every object is cofibrant.

The proof relies on three main ingredients: First, our construction of a weak model categories on simplicial sets, then the interplay with the semi-simplicial versions of this weak model structure and finally, the use of Kan $\Ex^{\infty}$-functor, and more precisely of S.Moss' direct proof that the natural map $X \rightarrow \Ex^{\infty} X$ is an anodyne morphism, which we show is constructive when $X$ is cofibrant.
\end{abstract}

\renewcommand{\thefootnote}{\fnsymbol{footnote}} 
\footnotetext{\emph{Keywords.} Model categories, constructive mathematics, simplicial sets, $\Ex^{\infty}$-functor.}
\footnotetext{\emph{2010 Mathematics Subject Classification.} 55U35,55U40,18G30.}
\footnotetext{This work was supported by the Operational Programme Research, Development and Education Project ``Postdoc@MUNI'' (No. CZ.02.2.69/0.0/0.0/16\_027/0008360)}
\renewcommand{\thefootnote}{\arabic{footnote}}



\tableofcontents

\makeatletter 
\renewcommand{\thetheorem}{\thesection.\arabic{theorem}} 
\@addtoreset{theorem}{section}
\makeatother

\section{Introduction}

The goal of this paper is to give a fully constructive proof of the existence of the usual Kan-Quillen model structure on simplicial sets, and of some of its classical properties. Constructive can be taken to mean ``Without the axiom of choices and the law of excluded middle'', or a bit more precisely as ``in the internal logic of an elementary topos with a natural number objects''. It can also be formalized in Aczel's (CZF) \cite{Aczel2010CST} and probably in considerably weaker foundation as well, see remark \ref{Rk:predicativity}.

Our main theorem is:

\begin{theorem} \label{main_theorem}

There is a proper cartesian Quillen model structure on the category of simplicial sets such that:

\begin{itemize}

\item The trivial fibrations are the morphisms with the right lifting property against all boundary inclusion $\partial \Delta [n] \hookrightarrow \Delta[n]$.

\item Cofibrations are the monomorphisms $f:A \rightarrow B$ which are ``level wise complemented''  (i.e. for all integer $n$ for each $b \in B([n])$ it is decidable if $b \in A([n])$ or not), and such that for all $b \in B([n]) - A([n])$ it is decidable if $b$ is degenerated or not. 

\item The fibrations are the ``Kan fibrations'', i.e. they are the morphisms with the right lifting property against the horn inclusion: $\Lambda^k[n] \hookrightarrow \Delta[n]$. Dually trivial cofibrations are the retract of $\omega$-transfinite composition of pushout of coproduct of horn inclusions (called anodyne morphisms).

\end{itemize}

\end{theorem}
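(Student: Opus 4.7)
The plan follows the three-pronged strategy announced in the abstract. First I would establish a \emph{weak} model structure on $\mathbf{sSet}$ with the classes of cofibrations and fibrations stated in the theorem, using a constructive small object argument: the generating cofibrations $\partial\Delta[n] \hookrightarrow \Delta[n]$ and horn inclusions $\Lambda^k[n] \hookrightarrow \Delta[n]$ have decidable image, so pushouts along them preserve the ``level-wise complemented with decidable degeneracy'' property that characterises cofibrations. This yields factorisations (cofibration, trivial fibration) and (anodyne, Kan fibration), at least when the source of the map to be factored is cofibrant. Everything here should be formulated inside the weak model structure framework assumed to be already developed earlier in the paper.

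To upgrade the weak structure to a full Quillen model structure one must define a class of weak equivalences satisfying two-out-of-three and verify that trivial cofibrations coincide with anodyne maps. I would define $f : X \to Y$ to be a weak equivalence when $\Ex^{\infty} f$ is a simplicial homotopy equivalence between the Kan complexes $\Ex^{\infty} X$ and $\Ex^{\infty} Y$. The crucial input is the constructive form of Moss's theorem: for cofibrant $X$, the unit $X \to \Ex^{\infty} X$ is anodyne, hence a trivial cofibration and a weak equivalence. This provides a functorial fibrant replacement on cofibrant objects. The semi-simplicial auxiliary mentioned in the abstract serves to sidestep the absence of cofibrancy for arbitrary simplicial sets: every semi-simplicial set is automatically cofibrant (there are no degeneracies to decide), so one can transfer arguments via the forgetful/free adjunction between simplicial and semi-simplicial sets, using a parallel semi-simplicial weak model structure to bootstrap the simplicial one.

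With $\Ex^{\infty}$-replacement on cofibrant objects, two-out-of-three and the identification of trivial cofibrations with anodyne maps follow from standard arguments carried out inside the weak model category: one replaces sources and targets by their $\Ex^{\infty}$ to reduce lifting or weak-equivalence questions to ones between Kan complexes, where homotopies can be exhibited explicitly. Right properness reduces to showing that the pullback of a weak equivalence along a Kan fibration is a weak equivalence, which can be checked after applying $\Ex^{\infty}$. Left properness and the cartesian (monoidal) axiom are purely combinatorial: the pushout-products of the two families of generators yield anodynes and cofibrations of the expected type, and all decidability conditions are preserved because the non-degenerate simplices of $\Delta[n]\times\Delta[m]$ are enumerated by shuffles, a decidable combinatorial datum.

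The main obstacle is the constructive re-proof of Moss's statement that $X \to \Ex^{\infty} X$ is anodyne for cofibrant $X$. Moss's argument is ``direct'' in the sense that it does not loop through a weak-equivalence / fibrant-replacement argument, but it performs numerous case distinctions on simplices of $X$ and $\Sd X$ which must be shown effective under the cofibrancy hypothesis, and it uses transfinite compositions whose indexing must be made explicit. Getting this combinatorial construction to go through without the law of excluded middle, while also preserving the level-wise complemented and decidable-degeneracy conditions at each stage of the filtration exhibiting the anodyne map, is the technical heart of the paper; the rest of the proof of Theorem~\ref{main_theorem} is a formal consequence given a good fibrant replacement and the weak model structure.
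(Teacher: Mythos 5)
Your overall architecture (weak model structure, constructive small object argument, Moss's proof that $X \to \Ex^{\infty}X$ is anodyne for cofibrant $X$, Ex-based arguments for fibration identification and right properness) matches the paper, but your central definitional choice creates a genuine gap. You define $f\colon X \to Y$ to be a weak equivalence when $\Ex^{\infty}f$ is a simplicial homotopy equivalence. Constructively this is exactly where the trouble lies: $\Ex^{\infty}X$ is only shown to be a Kan complex, and the unit $X \to \Ex^{\infty}X$ only anodyne, when $X$ is \emph{cofibrant} (without cofibrancy one gets merely an existential lifting property with no uniform choice of fillers). Since not every simplicial set is cofibrant here, your definition must handle maps between objects for which $\Ex^{\infty}$ gives non-fibrant outputs; ``simplicial homotopy equivalence'' between objects that are neither fibrant nor cofibrant is not known to satisfy two-out-of-three (the homotopy relation is not even a congruence), so the model category axioms cannot be verified from this definition by the ``standard arguments'' you invoke. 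The paper avoids this precisely by defining equivalences through the forgetful functor to semi-simplicial sets, where \emph{every} object is cofibrant, so the semi-simplicial weak model structure already supplies a well-behaved equivalence class for arbitrary maps; your passing mention of the semi-simplicial adjunction as a ``bootstrap'' does not replace this, because the definition you actually use is the $\Ex^{\infty}$ one.

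A second concrete problem: you claim left properness is ``purely combinatorial'' and follows from the pushout-product condition. It does not; classically it follows from all objects being cofibrant, which fails here, and with your $\Ex^{\infty}$-based equivalences it is worse, since $\Ex^{\infty}$ is a right adjoint and does not commute with pushouts. The paper proves left properness by observing that the pushout square is also a pushout of semi-simplicial sets, where all objects are cofibrant and pushout along a cofibration is left Quillen. Finally, two smaller points: the small object argument gives both factorizations for \emph{all} maps (your caveat ``when the source is cofibrant'' is unnecessary), and for the identification of Kan fibrations with the fibrations of the model structure you need the relative statement that $X \to \Ex^{\infty}X \times_{\Ex^{\infty}Y} Y$ is anodyne for $X$ cofibrant (so that a Kan fibration between cofibrant objects is a retract of a pullback of $\Ex^{\infty}$ of itself), plus a separate reduction from arbitrary Kan fibrations to those between cofibrant objects; your sketch of ``reducing to Kan complexes'' glosses over both steps.
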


Note that assuming the law of excluded middle the class of cofibrations boils down to the class of all monomorphisms and hence one recovers the usual Kan-Quillen model structure.

After we announced this result, two other proofs, relying on different tools, have been found by N.Gambino, C.Sattler and K.Szumilo and should appears soon.

\bigskip

This theorem is obtained by patching together the following results: theorem \ref{th:Simplicial_Quillen_MS_1} gives the existence of a model structure with the appropriate cofibrations and trivial fibrations, proposition \ref{prop:leftproper} gives left properness, proposition \ref{prop:Kan=Strong} shows that the fibrations and trivial cofibrations are indeed as specified here and proposition \ref{prop:rightProperness} shows that it is also right proper. Cartesianess was already known, but reproved in \ref{prop:pushoutProdCond}.

\bigskip

One can also say a few word about the equivalences of the model structure of \ref{main_theorem}: they are defined (in \ref{def:Simplicial_weak_equivalence}) using the forgetful functor to semi-simplicial sets and the weak model structure on semi-simplicial sets constructed in \cite[theorem 5.5.6]{henry2018weakmodel}. Proposition \ref{prop:Simp_equiv_vs_SemiSimp_equiv} shows that this notion of equivalence is compatible to the notion we used in \cite{henry2018weakmodel}, in particular \cite[proposition 5.2.5]{henry2018weakmodel} shows that they admit the usual characterization in terms of homotopy groups, as long as the homotopy groups are defined not as quotient sets but as \emph{setoids}.

\bigskip

As we do not assume the axiom of choice, one needs to precise some details regarding theorem \ref{main_theorem}: a ``structure of fibration'' (resp. trivial fibration) on a map $f$ is the choice of a solution to each lifting problem of a horn inclusion (resp. boundary inclusion) against $f$. No uniformity condition is required on these lift. A fibration (resp. trivial fibration) is a morphism which admits at least one structure of fibration (resp. trivial fibration), but the choice of the structure is considered irrelevant.

More generally, we will follow the convention that (unless exceptionally stated otherwise) every statement of the form $\forall a, \exists b$ should be interpreted as the existence of a function that given ``$a$'' produces a ``$b$''. In particular, when one says that a morphism has the lifting property against some set of arrow it means that one has a function that produces a solution to each lifting problem. We will use the convention constantly in the present paper, i.e. every time we say that ``there exists'' some $x$, we mean that one specific $x$ has been chosen for each possible value of the parameters involved in the statement.

\bigskip

As fibrations and trivial fibrations are defined by the right lifting property against a small set of morphisms between finitely presented objects, it is very easy to apply a constructive version of the small object argument to show that one has two weak factorization systems, which will be called as follows:

\begin{definition} \label{def:kanfib-anodyne-cof-fibtriv}
\begin{itemize}

\item[]

\item The weak factorization system cofibrantly generated by the boundary inclusion $\partial \Delta[n] \hookrightarrow \Delta[n]$ is called ``cofibrations/trivial fibrations.

\item The weak factorization system cofibrantly generated by the horn inclusion $\Lambda^k[n] \hookrightarrow \Delta[n]$ is called ``Anodyne morphisms/Kan fibrations''.

\end{itemize}
\end{definition}

We have discussed the constructive validity of the small object argument in appendix D of \cite{henry2018weakmodel}, though there are probably other references doing this.

Note that anodyne morphisms will in the end be the trivial cofibrations, and Kan fibrations will be what we have called fibrations in the statement of the main theorem \ref{main_theorem}, but this will be one of the last result we will prove. In the meantime we will distinguish between Kan fibrations and ``strong fibrations'' and between anodyne morphisms and ``trivial cofibrations'' (these two other concept being defined in \ref{def:Simplicial_weak_equivalence}). Simplicial sets whose map to the terminal simplicial set is a Kan fibration will be called either Kan complexes, or fibrant simplicial sets.

\begin{remark} Before going any further, we should pause here to insist on a very important remark: one of the key difference between what we are doing in the present paper and the usual construction of the Kan-Quillen model structure in classical mathematics is that the cofibrations are no longer exactly the monomorphisms. It can be shown, see for example proposition 5.1.4 in \cite{henry2018weakmodel}, that the class of cofibrations generated by the boundary inclusion, i.e. the class of arrow which have the left lifting property against all trivial fibration is exactly the class of cofibrations described in the statement of theorem \ref{main_theorem}. In particular one has: 

\begin{center}
\fbox{ \parbox{8cm}{\vspace{-0.3cm}\begin{center}\textbf{Not every simplicial set is cofibrant !} \\ A simplicial set $X$ is cofibrant if and only if it is decidable whether a cell of $X$ is degenerate or not.\end{center}\vspace{-0.3cm}}}
\end{center}

This introduces some changes compared to the classical situation, for example the left properness of the model structure on simplicial set is no longer automatic, and the assumption that certain objects needs to be cofibrant tends to appears in a lot of results. Compare for example \ref{cor:Ex_infty_fibrant}, \ref{prop:Ex_infty_Preserves_Fib} or \ref{prop_XtoExyXisAnodyne} to their classical counterparts. 

One can also show the classical Eilenberg-Zilber lemma, asserting that a cell $x \in X([n])$ can be written uniquely as $\sigma^*y$ for $\sigma$ a degeneracy and $y$ a non-degenerated cells holds if and only if $X$ is cofibrant. A general constructive version of the Eilenberg-Zilber lemma can be found as Lemma 5.1.2 in \cite{henry2018weakmodel} and does implies that the statement above holds for cofibrant simplicial sets. The converse (that the validity of the Eilenberg-Zilber lemma implies cofibrancy of $X$) is immediate from the decidability of equality between morphisms of the category $\Delta$: if a cell is written $\sigma^* y $ with $y$ non-degenerate one can decide if it is degenerate or not depending on if $\sigma$ is the the identity (an isomorphism)  or not.
\end{remark}

\bigskip

The general structure of the proof of this theorem (and in fact of the paper) is as follows:

\begin{itemize}

\item In subsection \ref{subsection:WMS} we review the existence of a  ``weak model structure'' on simplicial sets and semi-simplicial sets from \cite{henry2018weakmodel}, which is our starting point.

\item In subsection \ref{subsec:SimpMS}, more precisely in theorem \ref{th:Simplicial_Quillen_MS_1}, we will (up to a technical detail, see the remark~\ref{rk:gap_in_the_proof} below) extend this in a model structure on the category of simplicial sets with cofibrations (and trivial fibration) as specified above, but we will not show that trivial cofibrations are the same as anodyne morphisms, or equivalently that the fibrations (called ``strong fibrations'') are the Kan fibrations.  This part is based on the use of semi-simplicial sets.

\item Left properness of this model structure follow also from semi-simplicial techniques (see proposition \ref{prop:leftproper}).

\item The overall goal\footnote{We will give a more detailed account of its contents at the beginning of this section.} of section \ref{sec:KanEx} is to introduce Kan's $\Ex^{\infty}$-functor. This is done following the work of S.Moss from \cite{moss2015another}, which can be made constructive at the cost of only minor modification. This will allows us to show that the fibrations of the model structure above are exactly the Kan fibrations (proposition \ref{prop:Kan=Strong}) and to prove the right properness of this model structure (proposition \ref{prop:rightProperness}), as well as to fix a small gap in constructiveness of subsection \ref{subsec:SimpMS} (see the remark below).

\end{itemize}

\begin{remark} \label{rk:gap_in_the_proof}
The gap we are referring too in this last point is that in subsection \ref{subsec:SimpMS}, the ``strong fibrations'' (i.e. the fibrations of the model structure on simplicial sets) are defined as the map having the right lifting property against all cofibrations which are equivalences. It is unclear if they can be defined by a lifting property against a small set and hence if trivial cofibration/strong cofibration do form a weak factorization system as a model category structure should require. In proposition \ref{prop:Jiri_WFS} we give a formal argument that shows it is the case, but it is unlikely that this argument can be made constructive. What definitely solve the problem constructively is the proof in \ref{prop:Kan=Strong} that this factorization is actually just the ``anodyne morphisms/Kan fibrations'' factorization, but this require all the material of section \ref{sec:KanEx}. 

This being said, the reader should note that even before section \ref{sec:KanEx}, it holds constructively that the anodyne/Kan fibration of an arrow with fibrant target is a ``trivial cofibration/strong fibration'' factorization (because of the third point of lemma \ref{Lem:Cof+eq=trivCof_and_=anodyne_if_fibranttarget}). Hence it holds constructively, even without the results of section \ref{sec:KanEx}, that any arrow with fibrant target admit such a factorization, i.e. one already has something similar to a (right\footnote{not exactly though: in the standard terminology a right semi-model structure concern a weakening of the cofibration/trivial fibration factorization to arrows with fibrant target, where here it is the other weak factorization system which is concerned.}) semi-model category without invoking the properties of Kan $\Ex^{\infty}$ functor.
\end{remark}

\begin{remark}

The fact that we need to invoke the good properties of Kan's $\Ex^{\infty}$ functor to show that the class of fibration is indeed the class of Kan fibrations of course remind us of D-C.Cisinski's approach to the construction of Kan-Quillen model structure in \cite{cisinski2006prefaisceaux}. We do not really know how deep are the similarities between our proof and D-C.Cisinski's proof.  Our initial plan on this problem was actually to try to see if this approach of Cisinski can be made constructive or not. While we definitely do not exclude this is the case, it seemed to represent a considerably harder task than what we have achieved here. One of the problem is that Cisinski's theory relies heavily on a set theoretical argument similar to the one we mention in the proof of \ref{prop:Jiri_WFS}, whose constructiveness seems unlikely. The other problem being simply that Cisinski approach, while very elegant, relies on a considerable amount of machinery whose constructivity would have to be carefully checked.
\end{remark}

\begin{remark} \label{Rk:predicativity} Finally, I only said that ``constructive'' meant something like internal logic of an elementary topos with a natural number object for simplicity, but everything is actually completely predicative for some, relatively strong, sense of this word.
I believe that everything can be formalized within the internal logic of an ``Arithmetic universe'', i.e. a pretopos with parametrized list objects (see for example \cite{maietti2012induction} ). Such a formalization of course requires some modification: for example it wouldn't make sense to say that a morphisms ``is a fibration'' in the sense that ``there exists a structure of fibrations on the morphisms'' as the set of all ``structure of fibration'' on a given morphism cannot be defined, but it would make sense to consider a morphism endowed with a structure of fibration, and to show that given such a pair one can perform some construction.

Though working in such framework in an explicitly way forces to be extremely careful about a huge number of details and makes everything considerably more complicated, and would make the paper considerably more complicated. For this reason we will not do it explicitly. It seems to me that this is typically the sort of thing that should be done with a proof assistant.

There is one part of this claim that I have not checked carefully: Whether such a weak framework is sufficient to use the case of the small object argument that we need, i.e. construct the cofibration/trivial fibration and the anodyne/Kan fibrations factorization systems (generated respectively by boundary inclusion and horn inclusion) on simplicial set and semi-simplicial sets, though it seems reasonable that a complicated encoding with list object can achieve this. More precisely this should follow from the fact that the initial model theorem for partial horn theories of Vickers and Palmgren in \cite{palmgren2007partial} is believed to be provable internally in an arithmetic universe, and the factorization obtained from R.Garner's version of the small object argument (from \cite{garner2009understanding}) are constructed as certain initial structure that can be described using partial horn logic.
\end{remark}

\begin{remark} In a joint paper with Nicola Gambino (\cite{Gambino2019simplicial}), we will show that this Quillen model structure on simplicial sets admit all the necessary structure to interpret homotopy type theory, with type and context being interpreted as bifibrant objects. This was the main motivation for the present paper and the two papers have been written in close connection. I would also like to thanks Nicola Gambino for the helpful comments he made about earlier version of the present paper.
\end{remark}

\begin{notation} $\Delta$ and $\Delta_+$ denotes the category of finite non-empty ordinal, respectively with non-decreasing map and non-decreasing injection between them. $\widehat{\Delta}$ is the category of simplicial sets and $\widehat{\Delta_+}$ is the category of semi-simplicial sets (see \ref{claim:SemiSimplicialWMS_exists}). One denotes by $\Delta[n]$ and $\Delta_+[n]$ the representable simplicial and semi-simplicial sets corresponding to the ordinal $[n]=\{0,\dots,n\}$. Our usual notation for the boundary of the $n$-simplex and its $k$-th horn, both for simplicial and semi-simplicial versions are: $ \partial \Delta[n] \quad  \Lambda^k[n] \quad \partial \Delta_+ [n] \quad \Lambda^k_+[n] $

The boundary inclusion map is denotes $\partial_n$ or $\partial [n] : \partial \Delta[n] \rightarrow \Delta[n]$, the $i$-th face maps is denoted $\partial^i[n]$ or $\partial^i_n$ or just $\partial^i : \Delta[n-1] \rightarrow \Delta[n]$, for the map corresponding to the order preserving injection from $[n-1]$ to $[n]$ which only skip $i$. The degeneracy $\Delta[n+1] \rightarrow \Delta[n]$ that hits $i$ twice is denoted $\sigma^i$.

Given a simplicial or semi-simplicial sets $X$, the image of a cell $x \in X_n$ be the $i$-th face map is denoted $d_i x$.

\end{notation}

\makeatletter 
\renewcommand{\thetheorem}{\thesubsection.\arabic{theorem}} 
\makeatother

\section{Constructing the model structure}

\subsection{Review of the weak model structures}
\label{subsection:WMS}

\begin{npar} \label{claim:SimplicialWMS_exists}
One of the achievement of \cite{henry2018weakmodel}, which is the starting point of the present paper, is the construction of a ``weak model structure'' on the category of simplicial sets where fibrations (between fibrant objects) and cofibrations (between cofibrant objects) are as specified above.

More explicitly this means that there is a class of maps called ``equivalences\footnote{In most of the literature this are called weak equivalence, though we can't think of any reasons to keep the adjective ``weak'' other than history, so we will simply drop it.}'' in the category of simplicial sets that are either fibrant or cofibrant (in the sense above) such that:

\begin{itemize}

\item Weak equivalences (between objects that are either fibrants or cofibrant) contains isomorphisms, are stable under composition and satisfies $2$-out-of-$3$ (and the stronger $2$-out-of-$6$ property).

\item A cofibration between cofibrant objects is a weak equivalence if and only if it has the left lifting properties against all fibrations between fibrant objects (such a map is called a trivial cofibration).

\item A fibrations between fibrant objects is a trivial fibrations if and only if it is a weak equivalence\footnote{Here we use the fact that that trivial fibrations are characterized by a lifting property against cofibration between cofibrant objects, which might not be the case in a general weak model category.}.

\item The localization of the category of fibrant or cofibrant objects at the weak equivalences can be described as the category of fibrant and cofibrant objects with homotopy classes of maps between them. Where the homotopy relation is defined as usually, using equivalently a path object or a cylinder object. This localization is called the homotopy category.

\item The weak equivalences are exactly the morphisms that are invertible in the homotopy category (which proves the first point immediately).

\end{itemize}

One can deduce from this various characterization of weak equivalences: for example, a map from a cofibrant object to a fibrant object is a weak equivalence if and only if it can be factored as a trivial cofibration followed by a trivial fibration. Note that at this point it does not makes sense to ask whether a map $X \rightarrow Y$ is a weak equivalence if one of $X$ or $Y$ is neither fibrant nor cofibrant. 

\end{npar}

\begin{npar} \label{claim:SemiSimplicialWMS_exists} In \cite[theorem 5.5.6]{henry2018weakmodel} we also showed that a similar ``weak model structure'' exists on the category of semi-simplicial sets. Semi-simplicial sets are ``simplicial sets without degeneracies'', i.e. collection of sets $X_0,\dots,X_n,\dots$ with ``faces maps'' satisfying the same relations as the face maps of a simplicial sets. Equivalently they are presheaves on the category $\Delta_+$ of finite non-empty ordinals and injective order preserving maps between them.

The generating cofibrations in the category of semi-simplicial sets are the semi-simplicial boundary inclusion:

 \[ \partial \Delta_+[n] \hookrightarrow \Delta_+[n], \]

where $ \partial \Delta_+[n]$ and $\Delta_+[n]$ respectively denotes the semi-simplicial subset of non-degenerated cells in $\Delta[n]$ and $\partial \Delta[n]$. Note that the $\Delta_+[n]$ also corresponds to the representable semi-simplicial sets, so that a morphism $\Delta_+[n] \rightarrow X$ is the same as an $n$-cell of $X$ and a morphism $\partial \Delta_+[n] \rightarrow X$ is the data of a collection of $n$ cells of dimension $n-1$ with compatible boundary exactly as simplicial morphisms from $\partial \Delta[n]$ to a simplicial sets $X$. In particular a morphism $f :X \rightarrow Y$ of simplicial sets is a trivial fibration if and only if its image by the forgetful functor to semi-simplicial sets is a trivial fibration (in the sense that it has the right lifting property against the generating cofibration).

As there is no degeneracies anymore in $\widehat{\Delta_+}$ the description of cofibrations simplifies to just ``levelwise complemented monomorphisms''  i.e. the class of monomorphism $f :X \rightarrow Y$ such that for each $n$, and for each $ y \in Y([n])$ it is decidable whether $y \in X([n])$ or not (this is also discussed in \cite[theorem 5.5.6]{henry2018weakmodel}). In particular, every object is cofibrant.

Similarly, a morphism of semi-simplicial sets is said to be a Kan fibration when it has the lifting property against the semi-simplicial version of the horn inclusion $\Lambda^k_+[n] \hookrightarrow \Delta_+[n]$, where $\Lambda^k_+[n]$ and $\Delta_+[n]$ respectively denotes respectively the semi-simplicial sets of non-degenerate cells in $\Lambda^k[n]$ and $\Delta[n]$). As above a simplicial morphisms between simplicial sets is a Kan fibration if and only if its image by the forgetful functor to simplicial set is a Kan fibration of semi-simplicial sets.

In this weak model structure on semi-simplicial sets, the cofibration are as described above, the fibrant objects are the semi-simplicial Kan complexes and the fibrations and trivial fibration between fibrant object are the Kan fibrations and trivial fibrations. The big difference with the model structure on simplicial sets is that as every semi-simplicial set is cofibrant, the classes of weak equivalences is defined between arbitrary objects of the category.
Note that we do not claim that every trivial cofibration (i.e. cofibration which is an equivalence) is an anodyne morphism (i.e. a retract of a transfinite composition of pushout of coproducts of semi-simplicial horn inclusion) : the anodyne morphism have the left lifting property against all Kan fibrations, the trivial cofibration only against Kan fibration between Kan complexes.

\end{npar}

\begin{remark} \label{Rk:SemiSimpNotQuillen}Note that it is well known, even classically, that this model structure cannot be a Quillen model structure. As every object is cofibrant, it can be seen by a combinatorial argument that, at least classically, it is a ``right semi-model structure'' in the sense of \cite{barwick2010left}). But for example the codiagonal map $\Delta_+[0] \coprod \Delta_+[0] \rightarrow \Delta_+[0]$, where $\Delta_+[0]$ denotes the representable semi-simplicial sets by the ordinal $[0]= \{0\}$ is easily seen to have the lifting property of trivial fibrations (there is no higher cells to lift ! ) while it is clearly not a weak equivalence. \end{remark}

 The forgetful functor from simplicial sets to semi-simplicial sets is very well behaved: we showed in \cite[theorem 5.5.6]{henry2018weakmodel} that it is both a left and right Quillen equivalence, and we will prove in \ref{prop:Simp_equiv_vs_SemiSimp_equiv} that it preserves and detect weak equivalences without any assumption of fibrancy/cofibrancy. As all object in $\widehat{\Delta_+}$ are cofibrant, this will allow to remove some assumption of cofibrancy in various places.

\begin{proof}[Sketch of proof of \ref{claim:SimplicialWMS_exists}] We finish this section by presenting the main steps of the argument given in \cite{henry2018weakmodel} of the existence of the weak model structure on simplicial sets, i.e. all the claims made in \ref{claim:SimplicialWMS_exists}. The details of this can be found in \cite{henry2018weakmodel}, but we hope the following summary will be of help to the reader. The proof for semi-simplicial sets is quite similar.

The first (and essentially only) important technical step is the proof of the so-called ``pushout-product'' or ``corner-product'' conditions for the simplicial generating cofibrations and trivial cofibrations. This follows from a completely constructive results of Joyal (theorem 3.2.2 of \cite{joyal2008notes}), in \cite{henry2018weakmodel} it corresponds to lemma 5.2.2 (and how it is used in the proof of theorem 5.2.1 in 5.2.3). In the present paper we also reproduce a different proof of this claim as \ref{prop:pushoutProdCond}, which is due to S.Moss (see \cite[2.12]{moss2015another}).

From the corner-product conditions one deduces formally\footnote{using the so-called ``Joyal-Tierney calculus'' presented in the appendix of \cite{joyal2006quasi}, though this types of manipulation were known before, maybe in a less elegant or general way.} all the usual property of stability of cofibrations, anodyne morphisms, fibrations, and trivial fibrations under product and exponential expected in a cartesian model category (see proposition \ref{prop:pushoutProdCond} and the remark directly below it).

This allows to construct nicely behaved cylinder objects as $\Delta[1] \times X$ and path objects as $X^{\Delta[1]}$, whose legs are appropriately (trivial) (co)fibrations as soon as $X$ is (co)fibrant. More generally, one can construct relative path object for any fibration $X \twoheadrightarrow Y$ and relative cylinder object for any cofibration $A \hookrightarrow Y$. Having such relative cylinders and path objects is the definition of weak model structure that we gave in section 2 of \cite{henry2018weakmodel}. The precise observation that one get a weak model structure from such a tensor product satisfying the corner-product condition is essentially the construction done in section 3 of \cite{henry2018weakmodel}, summarized by theorem 3.2 there.

Then all the claims made in \ref{claim:SimplicialWMS_exists} follows from the general theory of weak model structure developed in section 2.1 and 2.2 of \cite{henry2018weakmodel}. We sketch the general strategy here, though at this point we recommend looking directly at subsection 2.1 and 2.2 of \cite{henry2018weakmodel} which are mostly self contained.

One uses these cylinders and path objects to define the homotopy relation between maps from a cofibrant object to a fibrant objects. Using the lifting property one show that the homotopy relation with respect to any cylinder object is equivalent to the homotopy relation with respect to any path object and that these define an equivalence relation compatible to pre-composition and post-composition. The proof is essentially the same as in a full Quillen model structure: the definition of weak model structure is exactly tailored so that the usual proof of these claims can be applied.

This allows to give a first definition of the homotopy category as the category whose objects are the fibrant-cofibrant objects and the maps are the homotopy class of maps. One then proves formally that this homotopy category is equivalent to various localization (see theorem 2.2.6 in \cite{henry2018weakmodel}), the last one being the localization of the category of simplicial sets that are either fibrant or cofibrant at all trivial cofibration with cofibrant domain and all trivial fibration with fibrant target. One can then defines weak equivalences as the arrow that are invertible in this localization, and one automatically have $2$-out-of-$6$ and all the other good properties of weak equivalences. The fact that trivial fibration (with fibrant domain) are exactly the fibration that are equivalence is a little harder and use again the property of the relative path objects (see proposition 2.2.9 in \cite{henry2018weakmodel}), and similarly for cofibrations.\end{proof}

\subsection{The simplicial model structure}
\label{subsec:SimpMS}

To obtain that simplicial sets form a full Quillen model structure we first need to extend the meaning of ``equivalences'' so that it makes sense also for arrows between objects that are neither fibrant nor cofibrant.  We will do this by exploiting the forgetful functor from the category of simplicial sets to the category $\widehat{\Delta_+}$ of semi-simplicial sets. As in the category of semi-simplicial sets every object is cofibrant the notion of weak equivalence there is defined for arbitrary arrows, and we will show it is reasonable to define equivalences of simplicial sets as arrow that are equivalences of the underlying semi-simplicial sets.

We start by the following observation:

\begin{lemma} \label{lem:TrivFib_anodyne_Are_Eq_in_Delta+}

\begin{itemize}

\item[]

\item If $f:X \rightarrow Y$ is an anodyne morphism in $\widehat{\Delta}$, then its image in $\widehat{\Delta_+}$ is also anodyne, and in particular is an equivalence.

\item Let $f:X \twoheadrightarrow Y$ be a trivial fibration in $\widehat{\Delta}$. Then the image of $f$ in $\widehat{\Delta_+}$ is an equivalence.

\end{itemize}
\end{lemma}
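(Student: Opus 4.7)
\textbf{For the first part,} the plan is to exploit that $U\colon\widehat{\Delta}\to\widehat{\Delta_+}$ is restriction along $\Delta_+\hookrightarrow\Delta$ and therefore has both a left and a right adjoint (the two Kan extensions). In particular $U$ preserves all colimits and retracts, so it suffices to prove that each image $U(\Lambda^k[n]\hookrightarrow\Delta[n])$ is anodyne in $\widehat{\Delta_+}$. I would build $U(\Delta[n])$ over $U(\Lambda^k[n])$ as a transfinite composition of pushouts of semi-simplicial horn inclusions, beginning with the pushout of $\Lambda^k_+[n]\hookrightarrow\Delta_+[n]$ that simultaneously attaches the top cell $\mathrm{id}_{[n]}$ and the face $\partial^k$. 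The subsequent steps attach the remaining (now all degenerate) cells $\alpha\colon[m]\to[n]$ of $U(\Delta[n])\setminus U(\Lambda^k[n])$---namely those whose image contains $\{0,\dots,\widehat{k},\dots,n\}$---by pairing each such $\alpha$ with a chosen ``companion'' cell $\gamma_\alpha$ of dimension $m+1$ satisfying $d_j\gamma_\alpha=\alpha$ for some $j$, and attaching the pair through the horn $\Lambda^j_+[m+1]\hookrightarrow\Delta_+[m+1]$.

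\textbf{The main obstacle} is the combinatorial bookkeeping in choosing the companions. A given degenerate $\alpha$ typically appears as \emph{several} faces of any $(m+1)$-cell containing it, so one must arrange a global well-ordering of the missing cells---for instance by dimension and by the position of the first ``repeat'' in the sequence $(\alpha_0,\dots,\alpha_m)$---such that, at the stage when $\alpha$ is to be attached, all faces of $\gamma_\alpha$ other than the distinguished $d_j\gamma_\alpha$ have already been produced. Setting up this ordering and verifying that it terminates in countably many steps is the technical heart of part~(1); once this is done, preservation of colimits propagates the conclusion from the generators to all anodyne morphisms.

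\textbf{For the second part,} the plan is shorter. By the observation in \ref{claim:SemiSimplicialWMS_exists} that a simplicial morphism is a trivial fibration if and only if its image under $U$ is, $U(f)$ is a trivial fibration in $\widehat{\Delta_+}$. Since every semi-simplicial set is cofibrant, solving the lifting problem against $\emptyset\hookrightarrow U(Y)$ produces a section $s\colon U(Y)\to U(X)$ with $U(f)\circ s=\mathrm{id}_{U(Y)}$. It remains to construct a homotopy $sU(f)\simeq\mathrm{id}_{U(X)}$: I would take a cylinder object $IU(X)$ (available because $U(X)$ is cofibrant), use the cofibration $U(X)\sqcup U(X)\hookrightarrow IU(X)$ on the left and the composite $U(f)\circ\pi\colon IU(X)\to U(Y)$ on the right, and lift the map $(\mathrm{id}_{U(X)},sU(f))\colon U(X)\sqcup U(X)\to U(X)$. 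The lift exists because $U(f)$, being a trivial fibration, has the right lifting property against all cofibrations. It realises $U(f)$ as a two-sided homotopy equivalence with inverse $s$, hence as an equivalence in the weak model structure on $\widehat{\Delta_+}$.
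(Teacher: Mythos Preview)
Your approach is sound for both parts, and in fact Part~2 is handled more directly than in the paper.

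\medskip

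\textbf{Part 1.} The paper simply cites \cite[Cor.~5.5.15(ii)]{henry2018weakmodel} and does not reprove the statement. Your plan---reduce to generators via colimit-preservation of $U$, then exhibit $U(\Lambda^k[n]\hookrightarrow\Delta[n])$ as a semi-simplicial anodyne map by a cell-pairing argument---is exactly the kind of argument that underlies the cited result. The combinatorics you outline (pairing each surjection $\alpha:[m]\twoheadrightarrow[n]$ missing only row $k$ with a companion of one higher dimension) is correct in spirit and can be made to work; the bookkeeping is essentially a $P$-structure in the sense of Section~\ref{subsec:PStructure}. You rightly flag that verifying the well-ordering is the nontrivial step; as written this is a plan rather than a proof, but the plan is the right one.

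\medskip

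\textbf{Part 2.} Here your argument is the same section-plus-homotopy strategy as the paper's, but you take a genuinely shorter route. The paper first assumes $X$ is cofibrant \emph{in $\widehat{\Delta}$}, builds the cylinder $\Delta[1]\times X$ there, and invokes Part~1 to transport it to a good cylinder in $\widehat{\Delta_+}$; it then needs a separate cofibrant-replacement argument to remove the cofibrancy hypothesis on $X$. You instead take a cylinder $IU(X)$ directly in $\widehat{\Delta_+}$, which exists for \emph{every} $X$ since all semi-simplicial sets are cofibrant. This avoids both the appeal to Part~1 and the final reduction step. The tradeoff is that your argument relies on the abstract existence of cylinders in the weak model structure on $\widehat{\Delta_+}$ (granted by \cite[Thm.~5.5.6]{henry2018weakmodel}), whereas the paper's cylinder is explicit; but for the purpose of this lemma your version is cleaner.
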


Note that in the second case, it is obvious that $f$ is a trivial fibration in $\widehat{\Delta_+}$, but this is not enough to deduce that is is an equivalence in general, unless its target is fibrant, as $\widehat{\Delta_+}$ only has a weak model structure.

\begin{proof}
\begin{itemize}

\item[]

\item This is corollary 5.5.15.(ii) of \cite{henry2018weakmodel}.

\item We first assume that $X$ is cofibrant. In this case one can construct a strong cylinder object for $X$ using the cartesian structure of simplicial sets: 

\[ X \coprod X \hookrightarrow \Delta[1] \times X  \overset{\sim}{\rightarrow} X \]

with the two maps $X \hookrightarrow \Delta[1] \times X$ being anodyne morphisms (this follows from the fact that $X$ is cofibrant and the corner-product conditions). Because of the previous point this produces a strong cylinder object for the underlying semi-simplicial set of $X$ in the category of semi-simplicial sets.

In $\widehat{\Delta_+}$, every object is cofibrant, and the arrow $f:X \rightarrow Y$ is still a trivial fibration, so one can find some dotted lifting for the following two squares in $\widehat{\Delta_+}$:

\[\begin{tikzcd}
\emptyset \ar[d,hook] \ar[r] & X \ar[d,two heads] \\
Y \ar[r] \ar[ur,dotted,"s"] & Y\\
\end{tikzcd} \qquad 
\begin{tikzcd}
X \coprod X \ar[d,hook]  \ar[rr,"{(Id_X , s f)}"] & & X \ar[d,two heads]  \\
\Delta[1] \times X \ar[r] \ar[urr,dotted,"h"] & X \ar[r] & Y \\ 
\end{tikzcd}
\]

In particular, $s$ is a section of $f$, i.e. $f s =Id_Y$, and $h$ an homotopy between $Id_X$ and $sf$. Hence $s$ is an inverse of $f$ in the homotopy category of $\widehat{\Delta_+}$, which makes $f$ an equivalence in $\widehat{\Delta_+}$.

In the general case (when we do not assume that $X$ is cofibrant), one take a cofibrant replacement (with a trivial cofibration) $X^c \overset{\sim}{\twoheadrightarrow} X$ and the result above applies to both the trivial fibration $X^c \overset{\sim}{\twoheadrightarrow} X$ and the composite trivial fibration $X^c \overset{\sim}{\twoheadrightarrow} Y$. By $2$-out-of-$3$ for weak equivalences in $\widehat{\Delta_+}$ this implies that the map $X \overset{\sim}{\twoheadrightarrow} Y$ is indeed an equivalence in $\widehat{\Delta_+}$.
\end{itemize}
\end{proof}

\begin{prop} \label{prop:Simp_equiv_vs_SemiSimp_equiv}
For a morphism $f:X \rightarrow Y$ between simplicial sets that are either fibrant or cofibrant the following are equivalent:

\begin{itemize}

\item $f$ is an equivalence for the weak model structure in $\widehat{\Delta}$.

\item The image of $f$ in $\widehat{\Delta_+}$ is an equivalence for the weak model structure on $\widehat{\Delta_+}$

\end{itemize}

\end{prop}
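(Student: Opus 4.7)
The plan is to handle both directions simultaneously by reducing to the bifibrant case and then factoring via the anodyne/Kan fibration weak factorization system.

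\textbf{Reduction to the bifibrant case.} I would first show that any fibrant-or-cofibrant simplicial set $X$ admits a bifibrant replacement connected to $X$ by a morphism that is simultaneously an equivalence in $\widehat{\Delta}$ and in $\widehat{\Delta_+}$. If $X$ is cofibrant, take an anodyne extension $X \hookrightarrow X^f$; then $X^f$ is both fibrant (by construction) and cofibrant (since anodyne maps are cofibrations), and $X \hookrightarrow X^f$ is a trivial cofibration between cofibrant objects in $\widehat{\Delta}$ and an equivalence in $\widehat{\Delta_+}$ by the first point of Lemma \ref{lem:TrivFib_anodyne_Are_Eq_in_Delta+}. If instead $X$ is fibrant, take a cofibrant replacement $X^c \twoheadrightarrow X$ via the cofibration/trivial fibration factorization; since trivial fibrations are Kan fibrations and Kan fibrations compose, $X^c$ is again fibrant, so it is bifibrant, and the map is a trivial fibration between fibrant objects (hence an equivalence in $\widehat{\Delta}$) as well as an equivalence in $\widehat{\Delta_+}$ by the second point of Lemma \ref{lem:TrivFib_anodyne_Are_Eq_in_Delta+}. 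Applying this to both $X$ and $Y$ and using $2$-out-of-$3$ in each category, the proposition reduces to the case where $X$ and $Y$ are both bifibrant.

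\textbf{The bifibrant case.} For $f:X\rightarrow Y$ with $X,Y$ bifibrant, factor $f$ in $\widehat{\Delta}$ via the anodyne/Kan fibration system as $X \overset{i}{\hookrightarrow} Z \overset{p}{\twoheadrightarrow} Y$. The anodyne map $i$ is a cofibration so $Z$ is cofibrant; and $p$ is a Kan fibration with $Y$ fibrant, so $Z$ is fibrant; hence $Z$ is bifibrant. The map $i$ is a trivial cofibration between cofibrant objects, so an equivalence in $\widehat{\Delta}$, and by Lemma \ref{lem:TrivFib_anodyne_Are_Eq_in_Delta+} also an equivalence in $\widehat{\Delta_+}$ (the forgetful functor sends anodyne maps to anodyne maps).

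\textbf{Closing both directions.} For the forward direction, if $f$ is an equivalence in $\widehat{\Delta}$, then $2$-out-of-$3$ gives that $p$ is; being a Kan fibration between fibrant objects, $p$ is then a trivial fibration in $\widehat{\Delta}$; Lemma \ref{lem:TrivFib_anodyne_Are_Eq_in_Delta+} makes it an equivalence in $\widehat{\Delta_+}$, and so $f = p \circ i$ is. For the converse, if $f$ is an equivalence in $\widehat{\Delta_+}$, then $2$-out-of-$3$ gives that $p$ is; now $p$ is a Kan fibration in $\widehat{\Delta_+}$ between fibrant objects (fibrancy is preserved by the forgetful functor since it both preserves and detects Kan fibrations, as noted in \ref{claim:SemiSimplicialWMS_exists}), so by the weak model structure on $\widehat{\Delta_+}$ it is a trivial fibration in $\widehat{\Delta_+}$; by the preservation/detection of trivial fibrations by the forgetful functor, $p$ is a trivial fibration in $\widehat{\Delta}$, hence an equivalence there, and $f = p \circ i$ is too.

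\textbf{Main obstacle.} There is no single hard step; the delicate point is rather the bookkeeping of which classes of maps are preserved and reflected by the forgetful functor $\widehat{\Delta}\rightarrow\widehat{\Delta_+}$. What makes the argument go through is the combination of three facts recalled in the paper: the forgetful functor preserves and detects both Kan fibrations and trivial fibrations, it sends anodyne maps to anodyne maps, and in each weak model structure a Kan fibration between fibrant objects is an equivalence iff it is a trivial fibration. Once these are in hand, the factorization trick via an intermediate bifibrant $Z$ makes the two notions of equivalence match.
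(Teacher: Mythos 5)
Your proposal is correct and follows essentially the same route as the paper: reduce via fibrant/cofibrant replacements and $2$-out-of-$3$, factor the map as an anodyne morphism followed by a Kan fibration between fibrant objects, and transfer the trivial-fibration condition across the forgetful functor to $\widehat{\Delta_+}$. The only cosmetic difference is that you reduce to a fully bifibrant pair, which silently requires constructing maps between the replacements by lifting along trivial fibrations and extending along anodyne maps, whereas the paper reduces only to cofibrant source and fibrant target by pre- and post-composition, so no lifting step is needed.
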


\begin{proof}

If $Y$ is cofibrant, then one can take a fibrant replacement $Y \overset{\sim}{\hookrightarrow} Y^f$. The map $Y  \overset{\sim}{\hookrightarrow} Y^f $ is an equivalence both in $\widehat{\Delta}$ and $\widehat{\Delta_+}$, so in both category $f$ is an equivalence if and only if the composite $X \rightarrow Y^f$ is an equivalence, so it is enough to prove the result when $Y$ is fibrant. A similar argument using a cofibrant replacement allows to assume that $X$ is cofibrant.

Assuming both $X$ cofibrant and $Y$ fibrant, one factors $f$ as an anodyne morphism (with cofibrant domain) followed by a Kan fibration (with fibrant target). The anodyne morphism is an equivalence in both categories, hence (in both category) $f$ is an equivalence if and only if the Kan fibration part is a trivial fibration. But for a map in $\widehat{\Delta}$, being a trivial fibration in $\widehat{\Delta}$ and in $\widehat{\Delta_+}$ are the exact same condition (the lifting property only involves face operation, no degeneracies).

\end{proof}

This last proposition makes the following definition very reasonable:

\begin{definition} \label{def:Simplicial_weak_equivalence}

\begin{itemize}

\item[]

\item An arrow in $\widehat{\Delta}$ is said to be an \emph{equivalence} if its image by the forgetful functor to $\widehat{\Delta_+}$ is an equivalence for the semi-simplicial version of the Kan-Quillen weak model structure mentioned in \ref{claim:SemiSimplicialWMS_exists}.

\item A trivial cofibration is a cofibration which is also an equivalence.

\item A strong fibration is an arrow that has the right lifting property against all trivial cofibrations.

\end{itemize}
\end{definition}

We remind that the reader, that we will prove in \ref{prop:Kan=Strong} that these notion of strong fibrations and trivial cofibrations are equivalent to the usual notion of Kan fibrations and anodyne morphisms.

\begin{remark} \label{rk:def_WE_fibrations}With this definition it is immediate that: 

\begin{itemize}

\item Isomorphisms are equivalences, and equivalences are stable under composition, satisfies the $2$-out-of-$3$ and even the $2$-out-of-$6$ properties.

\item Anodyne morphisms are trivial cofibrations. Indeed they are cofibrations by definition and they are equivalences in the sense of definition \ref{def:Simplicial_weak_equivalence} by the first point of lemma \ref{lem:TrivFib_anodyne_Are_Eq_in_Delta+}.

\item As a consequence, strong fibrations are Kan fibrations.

\item Trivial fibrations, defined by the right lifting property against boundary inclusion, are both strong fibrations because they have the right lifting property against all cofibrations, and equivalence because of lemma \ref{lem:TrivFib_anodyne_Are_Eq_in_Delta+}.

\item A Kan fibration (or strong fibrations) with fibrant target is a trivial fibrations if and only if it is an equivalence (this follows from proposition \ref{prop:Simp_equiv_vs_SemiSimp_equiv} and the fact that this fact holds in weak model categories).

\end{itemize}
\end{remark}

Maybe it is a good point to recall the following very classical lemma that we will use constantly in this paper:

\begin{lemma} \label{lem:retract}
Assume that a map $f$ is factored as $f = p i$. If $i$ has the left lifting property against $f$, then $f$ is a retract of $p$. If $p$ has the right lifting property against $f$ then $f$ is a retract of $i$.
\end{lemma}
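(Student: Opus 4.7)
The plan is to use the standard retract argument, which reduces to constructing a single diagonal lift in each case and then reading off the retract diagram directly. Let $f: A \to B$, $i: A \to C$, $p: C \to B$ with $f = p i$.

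For the first statement, assuming $i$ has the left lifting property against $f$, I would apply that lifting property to the commutative square with $\mathrm{id}_A$ on top, $i$ on the left, $p$ on the bottom, and $f$ on the right (it commutes precisely because $p i = f$). This gives a map $r : C \to A$ satisfying $r i = \mathrm{id}_A$ and $f r = p$. I would then exhibit $f$ as a retract of $p$ by the diagram
\[
\begin{tikzcd}
A \ar[r,"i"] \ar[d,"f"'] & C \ar[r,"r"] \ar[d,"p"] & A \ar[d,"f"] \\
B \ar[r,equal] & B \ar[r,equal] & B
\end{tikzcd}
\]
where the only nontrivial commutations are $r i = \mathrm{id}_A$ (from the lift) and $f r = p$ (also from the lift), and the top row composes to the identity of $A$ by construction.

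For the second statement, assuming $p$ has the right lifting property against $f$, the same square (now viewed with $f$ as left leg and $p$ as right leg) admits a diagonal $s : B \to C$ with $s f = i$ and $p s = \mathrm{id}_B$. This gives the retract diagram
\[
\begin{tikzcd}
A \ar[r,equal] \ar[d,"f"'] & A \ar[r,equal] \ar[d,"i"] & A \ar[d,"f"] \\
B \ar[r,"s"'] & C \ar[r,"p"'] & B
\end{tikzcd}
\]
whose commutativity is precisely the two identities $s f = i$ and $p s = \mathrm{id}_B$ produced by the lift.

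The proof is essentially bookkeeping: the single conceptual step is recognizing that the factorization $f = p i$ itself furnishes the commuting square whose diagonal lift one is forced to consider. There is no real obstacle, and in particular no issue constructively, since the argument produces the retract data from the chosen lift with no further choices or case distinctions; it is manifestly valid in the internal logic of any category.
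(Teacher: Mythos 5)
Your proof is correct and follows exactly the paper's argument: the same lifting square with the identity on top yields the lift $r$ (the paper's $h$), and the same retract diagrams; the only difference is that you write out the dual case explicitly where the paper simply invokes duality. No gaps.
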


\begin{proof}
We only prove the first half of the claim, the second is just the dual statement. One form a morphism $h$ as the dotted diagonal filler in first square below  (obtained by the lifting property of $i$ against $f$), which can then be used to form a retract diagram:

\[
\begin{tikzcd}
A \ar[d,"i"] \ar[r,equal] & A \ar[d,"f"] \\ 
B \ar[r,"p"] \ar[ur,dotted,"h"{description}] & C \\
\end{tikzcd}
\qquad
\begin{tikzcd}
  A \ar[rr,bend left =40, equal] \ar[d,"f"] \ar[r,"i"] & B \ar[d,"p"] \ar[r,"h"] & A \ar[d,"f"]\\
  C \ar[r,equal] & C \ar[r,equal] & C \\
\end{tikzcd}
\]

\end{proof}

\begin{lemma} \label{Lem:Cof+eq=trivCof_and_=anodyne_if_fibranttarget}

\begin{enumerate}[label=(\roman*)]

\item[]

\item A cofibration is a trivial cofibration if and only if it has the left lifting property against all Kan fibrations between Kan complexes.

\item An arrow whose target is a Kan complex is a trivial cofibration if and only if it is anodyne.

\item An arrow whose target is a Kan complex is a strong fibration if and only if it is a Kan fibration.

\item A map is a trivial fibration if and only if it is a strong fibration and an equivalence.

\end{enumerate}

\end{lemma}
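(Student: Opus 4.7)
The plan is to prove the four parts in the order (ii), (i), (iii), (iv), since the Kan-target statement (ii) feeds into (i) via a fibrant-replacement trick, and (iii)--(iv) then follow quickly using lemma \ref{lem:retract}.

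For (ii), the ``if'' direction is already contained in the second bullet of remark \ref{rk:def_WE_fibrations}. For ``only if'', I would take a trivial cofibration $i:A\to B$ with $B$ a Kan complex and factor it as $A\xrightarrow{j} Z\xrightarrow{p} B$ with $j$ anodyne and $p$ a Kan fibration. Since $B$ is Kan and $p$ is a Kan fibration, $Z$ is also Kan; so $p$ is a Kan fibration between Kan complexes, and two-out-of-three applied to $i=p j$ (with $j$ an equivalence by the second bullet of remark \ref{rk:def_WE_fibrations} and $i$ an equivalence by hypothesis) shows $p$ is an equivalence. The last bullet of remark \ref{rk:def_WE_fibrations} then upgrades $p$ to a trivial fibration, so $i$ (being a cofibration) has the left lifting property against $p$, and lemma \ref{lem:retract} exhibits $i$ as a retract of the anodyne map $j$, hence anodyne.

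For (i), I would fibrantly replace the target as $B\xrightarrow{j_B} B^f$ with $j_B$ anodyne and $B^f$ Kan, and transfer the hypothesis on $i$ to the composite $i' := j_B i:A\to B^f$. In the forward direction, $i'$ is a cofibration and an equivalence (both as composites of such), so (ii) makes it anodyne; given a lifting problem of $i$ against a Kan fibration $p:X\to Y$ between Kan complexes, I first extend $B\to Y$ to $B^f\to Y$ using that $j_B$ is anodyne and $Y\to 1$ is a Kan fibration, then solve the resulting anodyne/Kan-fibration lifting problem for $i'$, and restrict the resulting lift back to $B$. In the backward direction, an analogous extension argument shows that $i'$ inherits the LLP against Kan fibrations between Kan complexes (given a lifting square for $i'$, restrict along $j_B$, lift along $i$ by hypothesis, then extend back across the anodyne $j_B$). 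Factoring $i'$ as an anodyne $A\to Z$ followed by a Kan fibration $Z\to B^f$, the target $B^f$ being Kan forces $Z$ to be Kan, and lemma \ref{lem:retract} now exhibits $i'$ as a retract of the anodyne piece; so $i'$ is anodyne, and two-out-of-three with $j_B$ yields that $i$ is an equivalence.

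For (iii), the forward direction is already in remark \ref{rk:def_WE_fibrations}; conversely, if $p:X\to B$ is a Kan fibration with $B$ Kan, then $X$ is also Kan (composite of Kan fibrations to the terminal object), so $p$ is a Kan fibration between Kan complexes and (i) makes trivial cofibrations lift against it. For (iv) the forward direction is again contained in remark \ref{rk:def_WE_fibrations}; conversely, I would factor $f$ as a cofibration followed by a trivial fibration $q$, use two-out-of-three (with $q$ an equivalence and $f$ an equivalence) to promote the cofibration to a trivial cofibration against which the strong fibration $f$ has RLP, and conclude via lemma \ref{lem:retract} that $f$ is a retract of $q$, hence itself a trivial fibration. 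The only step that requires a bit of care is the LLP-inheritance in the backward direction of (i), which rests on the compatibility between restriction along $j_B$ and re-extension across the anodyne $j_B$; elsewhere the argument is a routine application of two-out-of-three, the retract lemma, and the characterization of equivalent Kan fibrations between Kan complexes as trivial fibrations.
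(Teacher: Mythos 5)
Your proposal is correct, and in places it takes a genuinely different route from the paper. The paper proves the forward direction of (i) by reducing an arbitrary lifting square against a Kan fibration between Kan complexes to one in which both horizontal maps are equivalences (factor the bottom map as anodyne followed by Kan fibration, pull back, then factor the induced top map), at which point $2$-out-of-$3$ makes the fibration a trivial fibration; it then extracts (ii) as a by-product of the converse direction of (i). You instead prove (ii) first, directly, and then obtain the forward direction of (i) by fibrant replacement of the target: $i'=j_B i$ is anodyne by (ii), and lifting squares for $i$ are solved by extending the bottom map along the anodyne $j_B$ (using fibrancy of $Y$) and restricting the resulting anodyne-versus-Kan-fibration lift back along $j_B$. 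Similarly, for the converse of (iv) the paper again argues by reducing the lifting square (factor the top map, push out, factor the bottom map), whereas you use the standard retract argument: factor as cofibration followed by trivial fibration, promote the cofibration by $2$-out-of-$3$, and apply lemma \ref{lem:retract} together with closure of the right class of a weak factorization system under retracts. Your backward direction of (i) coincides with the paper's, and you usefully make explicit the transfer of the lifting property along $j_B$ that the paper only asserts. Both arguments rest on the same ingredients -- the bullets of remark \ref{rk:def_WE_fibrations}, $2$-out-of-$3$, the anodyne/Kan-fibration and cofibration/trivial-fibration factorizations, and lemma \ref{lem:retract} -- and both are equally constructive, since every lift and factorization invoked comes from a cofibrantly generated weak factorization system; your version is somewhat shorter, while the paper's square-reduction technique for (i) is the template it later dualizes verbatim for (iv) and reuses in proposition \ref{prop:Kan=Strong}.
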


Because of the third point it is equivalent for a simplicial set $X$ that $X \rightarrow 1$ is a a Kan fibration (i.e. $X$ is a Kan complex) and that $X \rightarrow 1$ is a strong fibration. One will simply say that $X$ is fibrant.

\begin{proof}

\begin{enumerate}[label=(\roman*)]
\item[]
\item Let $f :A \hookrightarrow B$ be a cofibration that is also an equivalence, and we consider a lifting problem of $f$ against a Kan fibration between Kan complexes:

\[\begin{tikzcd}
A \ar[d,hook,"f"] \ar[r,"v"] & X \ar[d,two heads,"p"] \\
B \ar[r,"u"] & Y \\
\end{tikzcd}\]

In the special case where both $u$ and $v$ are equivalences, then by $2$-out-of-$3$, the map $p$ is also an equivalence. As it is a Kan fibration between Kan complexes it is also a trivial fibration, and hence the lifting problem has a solution because $f$ is a cofibration. We will now show that one can bring back the general case to this situation:

One can factor $u$ as an anodyne morphism followed by a Kan fibration: $B \overset{\sim}{\hookrightarrow} Y' \twoheadrightarrow Y$ and complete the diagram above by forming the pullback $P = Y' \times_Y X$ :

\[\begin{tikzcd}
A \ar[d,hook,"f"] \ar[r,"v'"] & P \ar[d,two heads] \ar[r,two heads] & X \ar[d,two heads] \\
B \ar[r,hook,"\sim"] & Y' \ar[r,two heads] & Y \\
\end{tikzcd}\]

The map $v'$ can be factorized as an anodyne morphism followed by a Kan fibration:

\[\begin{tikzcd}
A \ar[d,hook,"f"] \ar[r,"\sim"] & P' \ar[r,two heads] & P \ar[d,two heads] \ar[r,two heads] & X \ar[d,two heads] \\
B \ar[rr,hook,"\sim"] & & Y' \ar[r,two heads] & Y \\
\end{tikzcd}\]

The case treated above, where the two horizontal maps are equivalences, allows to produce a dotted diagonal lifting of the form:

\[\begin{tikzcd}
A \ar[d,hook,"f"] \ar[r,"\sim"] & P' \ar[dr,two heads] \ar[r,two heads] & P \ar[d,two heads] \ar[r,two heads] & X \ar[d,two heads] \\
B \ar[rr,hook,"\sim"] \ar[ur,dotted] & & Y' \ar[r,two heads] & Y \\
\end{tikzcd}\]

and this concludes the proof in the general case.

Conversely, assume $i:A \hookrightarrow B$ is a cofibration that has the left lifting property against all Kan fibrations between Kan complexes. One needs to show that $i$ is an equivalence. By taking an anodyne morphism $B \overset{\sim}{\hookrightarrow} B^f$ to a fibrant objects the composite $A \hookrightarrow B^f$ still has the announced lifting property so one can freely assume that $B$ is fibrant in order to show that $i$ is an equivalence. Under that assumption one factors $i$ as an anodyne morphism followed by a Kan fibration, the Kan fibration has a fibrant target so it has the right lifting property against $i$. Hence by the retract lemma \ref{lem:retract}, $i$ is a retract of the anodyne part of the factorization, hence anodyne itself and hence is an equivalence.

\item This second observation follows from last part of the proof of $(i)$ where we explicitly showed that a trivial cofibration with fibrant target is anodyne.

\item We have mentioned already that strong fibrations are Kan fibrations, and $(i)$ shows that Kan fibrations between Kan complexes are strong fibrations.

\item Trivial fibration have the right lifting property against all cofibrations, in particular against trivial cofibration hence they are strong fibration, and lemma \ref{lem:TrivFib_anodyne_Are_Eq_in_Delta+} shows they are equivalences. For the other direction, the proof is essentially the dual the proof of $(i)$. Let $p$ be a strong fibration that is also a weak equivalence, and consider a lifting problem of $p$ against a cofibration:

\[\begin{tikzcd}
 A \ar[d,hook] \ar[r] & X \ar[d,two heads,"p"] \\
B \ar[r] & Y \\
\end{tikzcd}\]

By factoring the map $A \rightarrow X$ into a cofibration $A \rightarrow A'$ followed by a trivial fibrations and taking the pushout of $A \hookrightarrow B$ along this map $A \rightarrow A'$ one reduces the problem to the case where the top map is an equivalence. One can then factor the bottom map as a cofibration followed a trivial fibration:

\[\begin{tikzcd}
A \ar[r,hook] \ar[dr,phantom,"\ulcorner"{very near end}] \ar[d,hook] & A' \ar[d,hook] \ar[rr,"\sim"] & & X \ar[d,two heads,"p"] \\
B \ar[r,hook] & B' \ar[r,hook] & Y' \ar[r,two heads,"\sim"] \ar[ur,dotted] & Y \\
\end{tikzcd}\]

where the dotted arrow exists because the composed cofibration $A \hookrightarrow Y'$ is a weak equivalence by the $2$-out-of-$3$ properties, and hence has the left lifting property against $p$. This provides a dotted filling for the initial square.

\end{enumerate}
\end{proof}

In order to conclude that one has a Model structure on simplicial sets, one needs one more proposition.

\begin{prop} \label{prop:Jiri_WFS}
Any morphism can be factored as a trivial cofibration followed by a strong fibration. 
\end{prop}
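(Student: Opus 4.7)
The plan is to appeal to a version of Jeff Smith's theorem on combinatorial weak factorization systems (as in Beke's ``Sheafifiable homotopy model categories''): I would produce a small set $J$ of trivial cofibrations with the property that any map right orthogonal to $J$ is already a strong fibration, and then apply the small object argument to $J$ to obtain the desired factorization of any $f$.

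The first and most substantial step is to fix an infinite regular cardinal $\kappa$ (for instance $\kappa = \aleph_1$) large enough that (i) the class of equivalences of Definition \ref{def:Simplicial_weak_equivalence} forms an accessibly embedded accessible subcategory of the arrow category of $\widehat{\Delta}$, in particular is closed under $\kappa$-filtered colimits; and (ii) the usual \emph{bounded cofibration property} holds, namely for any trivial cofibration $A \hookrightarrow B$ and any $\kappa$-presentable $C \subseteq B$ there exists a $\kappa$-presentable $D$ with $C \subseteq D \subseteq B$ such that $A \cap D \hookrightarrow D$ is again a trivial cofibration. Property (i) is the substantive input; I would try to deduce it from the characterisation of equivalences of semi-simplicial sets via homotopy groups of Kan complexes, together with the existence of some accessible fibrant replacement functor. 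Property (ii) would then follow from (i) by a L\"{o}wenheim--Skolem style argument inside the accessible subcategory of trivial cofibrations.

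With such a $\kappa$ fixed, I would take $J$ to be a set of representatives of isomorphism classes of trivial cofibrations between $\kappa$-presentable simplicial sets, and apply the small object argument to $J$. This factors any $f : X \to Y$ as $X \xrightarrow{i} Z \xrightarrow{p} Y$ where $i$ is a transfinite composition of pushouts of coproducts of elements of $J$ and $p$ is right orthogonal to $J$. The map $i$ is a cofibration by the closure of cofibrations under these operations, and it is an equivalence because equivalences are closed under pushouts along cofibrations, coproducts and $\kappa$-filtered colimits; hence $i$ is a trivial cofibration. For $p$, given a lifting problem against an arbitrary trivial cofibration $A \hookrightarrow B$, I would run a Zorn's lemma argument on the poset of partial lifts $(A \subseteq A' \subseteq B, h : A' \to Z)$ with $A \hookrightarrow A'$ a trivial cofibration, extending one $\kappa$-presentable step at a time using the bounded cofibration property and the $J$-injectivity of $p$. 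A colimit of a chain of such extensions is again a trivial cofibration (by closure under $\kappa$-filtered colimits), so Zorn produces a lift defined on all of $B$, which shows that $p$ is a strong fibration.

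The hard part, and the reason the proof is not constructive, is property (i), together with the use of the axiom of choice to pick $J$ and Zorn's lemma for the final lifting step. This is precisely why the author will eventually take a different route in section \ref{sec:KanEx}, identifying strong fibrations with Kan fibrations (Proposition \ref{prop:Kan=Strong}) so that the factorization is just the anodyne/Kan fibration factorization already produced constructively by the small object argument.
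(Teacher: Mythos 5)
Your argument is classically sound: it is the Jeff Smith/Beke route (accessibility of the equivalences, bounded cofibration property, a set $J$ of $\kappa$-presentable trivial cofibrations, small object argument, Zorn for the lifting step), and it is genuinely different from the proof in the paper. The paper's first proof is also non-constructive, but it delegates all set-theoretic content to Theorem 3.2 of \cite{makkai2014cellular}: using the characterization of trivial cofibrations as the cofibrations with the left lifting property against Kan fibrations between Kan complexes (Lemma \ref{Lem:Cof+eq=trivCof_and_=anodyne_if_fibranttarget}.(i)), it exhibits the pair (simplicial sets, trivial cofibrations) as a pseudo-pullback of the cellular categories (simplicial sets, cofibrations) and (algebraic Kan complexes, left class generated by the images of the horns), whence the trivial cofibrations are generated by a set and the small object argument finishes. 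What the paper's route buys is that it entirely avoids your substantive input (i) -- accessibility of the class of equivalences -- together with the L\"owenheim--Skolem and Zorn machinery; what your route buys is independence from the Makkai--Rosick\'y theorem, at the price of actually proving (i), which for the equivalences of Definition \ref{def:Simplicial_weak_equivalence} (defined through semi-simplicial sets) is not available off the shelf and would likely be established by comparison with classical weak homotopy equivalences via $\Ex^{\infty}$, i.e.\ by material the paper only develops later. One small correction: to see that a relative $J$-cell complex is an equivalence you need trivial cofibrations to be closed under pushout along \emph{arbitrary} maps, coproducts and transfinite composition; this is not given by ``equivalences are closed under pushouts along cofibrations'' (which is left properness, Proposition \ref{prop:leftproper}, where the attached map need not be a cofibration), but it does follow cleanly from the lifting characterization in Lemma \ref{Lem:Cof+eq=trivCof_and_=anodyne_if_fibranttarget}.(i), which also feeds your Zorn argument. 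Finally, your closing remark matches the paper exactly: both your proof and the paper's first proof are non-constructive, and the constructive resolution is Proposition \ref{prop:Kan=Strong}, which identifies this factorization with the anodyne/Kan fibration one.
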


Again, we will show in \ref{prop:Kan=Strong} that this factorization system is actually the same as the anodyne/Kan fibrations factorizations system, i.e. that trivial cofibration are the same anodyne morphisms and that strong fibration are the same as Kan fibrations. Note that at this point it is immediate that anodyne morphism are trivial cofibrations, and hence that fibrations are Kan fibrations.

\begin{proof}

We will give two proof of this claim. The first one follows from \cite{makkai2014cellular}, more precisely its theorem $3.2$, which is not known to be constructive but allows to give a simple and direct proof of the present proposition.

In order to fix the issue with constructivity one gives a second, considerably less direct proof: as mentioned above in \ref{prop:Kan=Strong} we will prove independently of the present proposition that trivial cofibrations are the same as anodyne morphisms, hence showing that the weak factorization mentioned in the proposition exists and is simply the anodyne-Kan fibration weak factorization system (whose existence follows from the small object arguments).

We still give the first proof as we believe it is interesting on its own as it allows to construct the model structure on simplicial sets without needing to invoke Kan $\Ex^{\infty}$-functor.

Theorem 3.2 of \cite{makkai2014cellular} claims that the $2$-category of presentable categories endowed with a class of cellular morphisms generated by a set of morphisms is closed under pseudo-pullback, and that these pullback are constructed explicitly: the underlying category is the pullback of categories, and the class of cellular morphisms are the morphisms whose image in each component are in the specified classes. We apply this to the following square:

\[\begin{tikzcd}
  P \ar[r] \ar[d] \ar[dr,phantom,"\hspace{-10pt}\lrcorner"{description,very near start}] & (\text{Kan-Cplx},\text{TrivFib}) \ar[d] \\
(\widehat{\Delta},Cof) \ar[r] & (\text{Kan-Cplx},\text{All arrows}) \\
\end{tikzcd}\]

Where ``Cof'' denotes the class of cofibration in $\widehat{\Delta}$ which is generated by a set. Kan-Cplx denotes the category of ``algebraic Kan complexes'', i.e. simplicial set endowed with chosen lifting against horn inclusion and of morphisms compatible to these choices of lifting. The functor $\widehat{\Delta} \rightarrow \text{Kan-Cplx}$ send any simplicial set to the ``free algebraic Kan complexes it generates'',i.e. the left adjoint to the forgetful functor from algebraic Kan complex to simplicial set, or equivalently the functor sending a simplicial set to its canonical fibrant replacement as produced by R.Garner version of the small object argument.

The class $\text{TrivFib}$ is the left class of the weak factorization on Kan-Cplx cofibrantly generated by the image of the horn inclusion in $\widehat{\Delta}$. The right class of the weak factorization system are hence exactly the morphism whose image by the forgetful functor to $\widehat{\Delta}$ are Kan fibrations. It follows that the morphism in $\widehat{\Delta}$ which are sent to ``trivial cofibrations'' in Kan-Cplx are exactly the arrows that have the left lifting property against all Kan fibration between Kan complexes. Hence in this case the pullback is the category of simplicial sets with as set of cellular morphisms the map that are both cofibrations and have the left lifting property against Kan fibration between Kan complexes, i.e. the ``trivial cofibrations'' as defined above, hence this class of arrow is generated by a set, and hence by the small object argument it is one half of a weak factorization system.
\end{proof}

\begin{theorem} \label{th:Simplicial_Quillen_MS_1}

There is a model structure on the category of simplicial sets such that:

\begin{itemize}

\item The equivalences are as defined in \ref{def:Simplicial_weak_equivalence}.

\item The cofibrations and trivial fibrations are the same as in theorem \ref{main_theorem}.

\item The fibrations are the strong fibration of definition \ref{def:Simplicial_weak_equivalence}.

\end{itemize}

\end{theorem}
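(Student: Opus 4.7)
The plan is to verify the Quillen model structure axioms by assembling the results already established in this subsection. We take cofibrations as in Definition \ref{def:kanfib-anodyne-cof-fibtriv}, equivalences as in Definition \ref{def:Simplicial_weak_equivalence}, and fibrations as the strong fibrations from Definition \ref{def:Simplicial_weak_equivalence}. Since simplicial sets trivially have all small limits and colimits, the nontrivial axioms are: two-out-of-three for equivalences, retract closure of the three classes, and the two weak factorization systems (cofibration, trivial fibration) and (trivial cofibration, strong fibration), with trivial fibration $=$ fibration $\cap$ equivalence and trivial cofibration $=$ cofibration $\cap$ equivalence.

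Two-out-of-three (in fact two-out-of-six) is recorded in Remark \ref{rk:def_WE_fibrations}, inherited from the corresponding property in $\widehat{\Delta_+}$ through the forgetful functor. Retract closure is straightforward from the formalism: cofibrations and trivial fibrations are closed under retracts because they form a weak factorization system obtained from the constructive small object argument applied to boundary inclusions; strong fibrations are closed under retracts since they are defined by a right lifting property; and equivalences are closed under retracts because, as recalled in \ref{claim:SemiSimplicialWMS_exists}, they are characterized as the maps becoming invertible in the homotopy category of $\widehat{\Delta_+}$, and retracts of invertible morphisms are invertible in any category.

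Next I would exhibit the two weak factorization systems. The (cofibration, trivial fibration) factorization is the one produced by the small object argument of Definition \ref{def:kanfib-anodyne-cof-fibtriv}; it remains to check that its right class coincides with strong fibrations $\cap$ equivalences. Every trivial fibration has the right lifting property against all cofibrations and hence against all trivial cofibrations, making it a strong fibration, and is an equivalence by Lemma \ref{lem:TrivFib_anodyne_Are_Eq_in_Delta+}; conversely, that a strong fibration which is an equivalence is already a trivial fibration is exactly Lemma \ref{Lem:Cof+eq=trivCof_and_=anodyne_if_fibranttarget}(iv). For the (trivial cofibration, strong fibration) factorization, the factorization half is Proposition \ref{prop:Jiri_WFS} and the lifting half is by definition of strong fibrations; the matching of the left class with cofibrations $\cap$ equivalences is again by definition.

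The only genuinely nontrivial ingredient, and hence the main obstacle, is Proposition \ref{prop:Jiri_WFS}: it is not a priori clear that the class of trivial cofibrations is the left half of a weak factorization system, because it is not obviously cofibrantly generated by a small set. As flagged in Remark \ref{rk:gap_in_the_proof}, this is precisely where constructivity becomes delicate, and the fully constructive fix is postponed: in \ref{prop:Kan=Strong} one identifies trivial cofibrations with anodyne morphisms and strong fibrations with Kan fibrations, so that this second weak factorization system turns out to be none other than the (anodyne, Kan fibration) one already supplied by the small object argument. Granting Proposition \ref{prop:Jiri_WFS}, however, the verification of the present theorem is nothing more than the bookkeeping above.
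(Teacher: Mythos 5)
Your proposal is correct and follows essentially the same route as the paper's own (very terse) proof: both assemble the two weak factorization systems (the small-object one for cofibration/trivial fibration, and Proposition \ref{prop:Jiri_WFS} for trivial cofibration/strong fibration), identify trivial fibrations with strong fibrations that are equivalences via Lemma \ref{Lem:Cof+eq=trivCof_and_=anodyne_if_fibranttarget}(iv), and invoke $2$-out-of-$3$/$2$-out-of-$6$ from the definition of equivalences. Your added remarks on retract closure and on the constructive caveat surrounding Proposition \ref{prop:Jiri_WFS} match what the paper records in Remark \ref{rk:gap_in_the_proof}.
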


\begin{proof}
We have two weak factorization systems, trivial cofibrations have been defined as the cofibrations that are equivalences, and it was shown in \ref{Lem:Cof+eq=trivCof_and_=anodyne_if_fibranttarget} that trivial fibrations are the (strong) fibrations that are equivalences. Equivalences are stable by composition, satisfies $2$-out-of-$6$ and contains isomorphisms by definition, so this concludes the proof.
\end{proof}

\begin{prop} \label{prop:leftproper}

The model structure of theorem \ref{th:Simplicial_Quillen_MS_1} is left proper, i.e. the pushout of weak equivalence along a cofibration is a weak equivalence.

\end{prop}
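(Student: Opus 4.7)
The plan is to push the statement across the forgetful functor $U\colon \widehat{\Delta}\to\widehat{\Delta_+}$ and then establish left properness in $\widehat{\Delta_+}$, exploiting the fact that every object there is cofibrant.

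First, let
\[\begin{tikzcd}
A \ar[d,hook,"i"] \ar[r,"f"] & B \ar[d,"f'"] \\
C \ar[r,"i'"] & D \\
\end{tikzcd}\]
be a pushout square in $\widehat{\Delta}$ with $i$ a cofibration and $f$ an equivalence. The functor $U$ is restriction along $\Delta_+\hookrightarrow\Delta$, hence has both a left and a right Kan extension adjoint, and in particular preserves pushouts. It sends cofibrations of $\widehat{\Delta}$ (as described in theorem \ref{main_theorem}) to level-wise complemented monomorphisms, i.e.\ to cofibrations in $\widehat{\Delta_+}$, and by definition \ref{def:Simplicial_weak_equivalence} it sends equivalences to equivalences. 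Moreover, by that same definition, $i'$ is an equivalence in $\widehat{\Delta}$ if and only if $U(i')$ is an equivalence in $\widehat{\Delta_+}$. So the problem reduces to proving that in $\widehat{\Delta_+}$ the pushout of an equivalence along a cofibration is an equivalence.

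Second, factor the equivalence $f$, viewed in $\widehat{\Delta_+}$, as $A\overset{j}{\hookrightarrow} A'\overset{p}{\twoheadrightarrow} B$ using the cofibration/trivial fibration weak factorization system. Since $B$ is cofibrant (as is every semi-simplicial set), the semi-simplicial analog of the second point of lemma \ref{lem:TrivFib_anodyne_Are_Eq_in_Delta+} (whose proof works internally to $\widehat{\Delta_+}$) shows that $p$ is an equivalence, and then $2$-out-of-$3$ gives that $j$ is a trivial cofibration. Taking the two successive pushouts along $i$ yields
\[\begin{tikzcd}
A \ar[d,hook,"i"] \ar[r,hook,"j"] & A' \ar[d,hook] \ar[r,two heads,"p"] & B \ar[d] \\
C \ar[r,hook,"j'"] & P \ar[r,"p'"] & D \\
\end{tikzcd}\]
with $P=C\cup_A A'$. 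The map $j'$ is a cofibration by pushout stability, and is an equivalence because trivial cofibrations in $\widehat{\Delta_+}$ are characterised (via the semi-simplicial analog of lemma \ref{Lem:Cof+eq=trivCof_and_=anodyne_if_fibranttarget}(i)) by the left lifting property against all Kan fibrations between Kan complexes, a property preserved by pushout.

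Finally, by $2$-out-of-$3$ it suffices to show that $p'$, the pushout of the trivial fibration $p$ along the cofibration $A'\hookrightarrow P$, is an equivalence. Since $B$ is cofibrant, $p$ admits a section $s\colon B\to A'$; since $A'$ is cofibrant and $p$ is a trivial fibration, lifting against a cylinder $A'\coprod A'\hookrightarrow \text{Cyl}(A')$ produces a homotopy $H$ over $B$ between $Id_{A'}$ and $s\circ p$. Pushing $s$ and $H$ out along $A'\hookrightarrow P$ exhibits $p'$ as a homotopy equivalence, and hence an equivalence. The main obstacle is precisely this last step: one needs the pushed-out cylinder to remain an admissible cylinder for $P$, which rests on the stability of anodyne morphisms under pushout and the pushout-product properties of the semi-simplicial tensor structure from \cite[Section 5.5]{henry2018weakmodel}. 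An alternative way to close the argument, bypassing explicit cylinders, is to invoke a gluing lemma for weak model categories in which every object is cofibrant.
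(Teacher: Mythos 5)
Your first step (transporting the square along the forgetful functor $U$, which preserves pushouts, cofibrations and, by definition \ref{def:Simplicial_weak_equivalence}, creates equivalences) is exactly how the paper begins. The problem is how you finish inside $\widehat{\Delta_+}$. Your step 2 rests on the claim that a trivial fibration of semi-simplicial sets is an equivalence because ``the semi-simplicial analog of the second point of lemma \ref{lem:TrivFib_anodyne_Are_Eq_in_Delta+} works internally to $\widehat{\Delta_+}$''. This is false, and the paper says so explicitly: remark \ref{Rk:SemiSimpNotQuillen} exhibits the codiagonal $\Delta_+[0]\coprod\Delta_+[0]\rightarrow\Delta_+[0]$, which has the right lifting property against all semi-simplicial boundary inclusions (there are no higher cells in the target) but is not an equivalence. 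The proof of lemma \ref{lem:TrivFib_anodyne_Are_Eq_in_Delta+} does not transport to $\widehat{\Delta_+}$ because it uses the simplicial cylinder $\Delta[1]\times X$ together with its projection to $X$, and that projection is built from degeneracies; there is no map $\Delta_+[1]\rightarrow\Delta_+[0]$, hence no such cylinder-with-retraction semi-simplicially. Consequently you cannot conclude that $p$ is an equivalence, so $2$-out-of-$3$ does not give you that $j$ is a trivial cofibration, and step 3 collapses for the same reason: the homotopy over $B$ between $Id_{A'}$ and $s\circ p$ that you want to extract by lifting does not exist in general (the same codiagonal example is a trivial fibration with a section admitting no such homotopy), and even setting that aside, the commuting square you lift in needs the nonexistent cylinder projection to define its bottom edge. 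This is not a repairable detail: your route essentially needs ``trivial fibration $\Rightarrow$ equivalence'' in $\widehat{\Delta_+}$, which is precisely what fails there (it is why $\widehat{\Delta_+}$ carries only a weak model structure).

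The correct way to close the argument after your (correct) reduction is the one the paper uses, and it is close to the ``alternative'' you mention only in passing: in $\widehat{\Delta_+}$ every object is cofibrant, and pushout along a cofibration between cofibrant objects is a left Quillen functor (it preserves cofibrations and trivial cofibrations by the left-lifting-property characterizations, which are pushout-stable), hence by the Ken Brown--type argument available in weak model categories it preserves equivalences between cofibrant objects. This avoids any appeal to trivial fibrations being equivalences, to sections, or to cylinders with projections, none of which are available semi-simplicially; if you want to keep an explicit factorization argument, it has to be organized so that only anodyne maps and lifting properties are used, not the false identification of trivial fibrations with equivalences.
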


\begin{proof}

Given a pushout square in the category of simplicial sets:

\[ \begin{tikzcd}
A \ar[d,hook] \ar[r,"\sim"] & C \ar[d,hook] \\
B \ar[r,"f"] & D \\  
\end{tikzcd}
\]

Then as the forgetful functor to semi-simplicial sets preserves all colimits, this square is again a pushout in the category of semi-simplicial sets. In this category every object is cofibrant, and pushout along a cofibration between cofibrant objects is a left Quillen functor hence preserves equivalences between cofibrant objects, hence $f$ is an equivalence in the category of semi-simplicial sets, and hence is an equivalence in $\widehat{\Delta}$ by definition (~\ref{def:Simplicial_weak_equivalence}).
\end{proof}

\section{Kan $\Ex^{\infty}$-functors}
\label{sec:KanEx}

The goal of this section is to introduce Kan's $\Ex$ and $\Ex^{\infty}$ functors and to use them in subsection \ref{subsec:AppOfExInf} to prove the remaining claim concerning the simplicial model structure. Most of the results here were (in their classical form) originally proved by Kan in \cite{kan1957css} (often with quite different proof that the ones we will provide here), but we will mostly follow the approach of S.Moss in \cite{moss2015another} which we will make constructive by only adding some details.

Subsection \ref{subsection:Degen_quo} is a preliminary section that is of some independent interest but which will have only a very marginal role in the paper: it will only be used to prove some decidability conditions (more precisely lemma~\ref{lem:Decidability_for_ExX}, which will be an easy consequence of \ref{lem:decidability_lifting_degenQuo} and proposition \ref{Prop:retractOfPosetAndDegeneracyQuotient}). As such it can be easily ignored by the reader.

Subsection \ref{subsec:PStructure} review the notion of ``P-structure'' introduced by S.Moss, which is mostly a language to talk more conveniently about ``Strongly anodyne morphisms'', i.e. transfinite composition of pushout of horn inclusion. This is a key tool to structure the proof of the main results of section \ref{subsec:KanEx_anodyne}.

Subsection \ref{subsec:KanExfunctorDef} introduce Kan's barycentric subdivision functor $\Sd$, its right adjoint $\Ex$ and Kan's $\Ex^{\infty}$ functor and proves some of their basic properties. This is very classical material that we reproduce here just for completeness and to discuss some constructive aspect.

Subsection \ref{subsec:KanEx_anodyne} reproduces (with some modification to make it constructive) S.Moss' proof in \cite{moss2015another} that the natural transformation $X \rightarrow \Ex^{\infty} X$ is an anodyne extension. Constructively this only works when $X$ is cofibrant. We also noted that S.Moss prove can be used to obtain a result which apparently was not known even classically: for any morphisms $f :X \rightarrow Y$ (with $X$ cofibrant) the natural morphisms:

 \[ X \rightarrow \Ex^{\infty} X \times_{\Ex^{\infty} Y} Y \]

is anodyne. This was known when $Y$ is terminal, or when $X \rightarrow Y$ is a fibration, and we will actually only use it in these two special cases.

Finally subsection \ref{subsec:AppOfExInf} uses the properties of this functor to conclude that all Kan fibrations are strong fibrations (proposition~\ref{prop:Kan=Strong}) and that the model structure on simplicial sets is indeed right proper (proposition \ref{prop:rightProperness}).

\subsection{Degeneracy quotient and questions of decidability} \label{subsection:Degen_quo}

In this section we establish some general results about a notion of ``degeneracy quotient'' that we will introduce. While the notion might have some interest on its own in other context its only use in the present paper is to prove some decidability results, which will follow from lemma \ref{lem:decidability_lifting_degenQuo} below. In fact, the only uses of this section in the present paper is in the proof of the decidability conditions of lemma \ref{lem:Decidability_for_ExX}. Proposition \ref{prop:DQ_stable_under_pullback} is not useful for the present paper, but will serve in some future work, in particular in \cite{Gambino2019simplicial} and it was more natural to include its proof here.

\begin{definition}

A morphism $f:X \rightarrow Y$ between simplicial sets is said to be \emph{degeneracy detecting} if:

\[ \forall x \in X, \quad f(x) \text{ is degenerated} \Rightarrow x \text{ is degenerated} \]

\end{definition}

Of course the converse implication is true for any simplicial map, so one has that $x$ is degenerated if and only if $f(x)$ is degenerated. One says that a cell $x \in X_n$ is $\sigma$-degenerated for some degeneracy $\sigma : [n] \rightarrow [m]$ if $x = \sigma^*y$ for some $y$. 

\begin{lemma}\label{lem:SigmaDegen_in_terms_of_degen}
Let $\sigma : [n] \rightarrow [m]$ be any degeneracy and $x \in X_n$ any cell. The following are equivalent:

\begin{enumerate}[label=(\roman*)]
\item $x$ is $\sigma$-degenerated.

\item For all face map $i:[k] \rightarrow [n]$ such that the composite $\sigma i$ is non-injective, the cell $i^* x$ is degenerated.
\end{enumerate}
\end{lemma}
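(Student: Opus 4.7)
Direction (i)$\Rightarrow$(ii) is immediate: if $x = \sigma^* y$ and $i : [k] \hookrightarrow [n]$ is a face, then $i^* x = (\sigma i)^* y$, and the epi-mono factorization $\sigma i = \iota \rho$ with $\rho : [k] \twoheadrightarrow [k']$ a surjection and $\iota : [k'] \hookrightarrow [m]$ an injection has $\rho$ non-identity precisely when $\sigma i$ is non-injective, in which case $i^* x = \rho^* \iota^* y$ is degenerate.

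For (ii)$\Rightarrow$(i), my plan is to first reduce to the elementary case $\sigma = \sigma^k : [n] \to [n-1]$, and then handle that case by a separate induction on $n$. For the reduction I induct on $n - m$: assuming $\sigma$ non-identity, choose $k$ with $\sigma(k) = \sigma(k+1)$ and factor $\sigma = \sigma'' \circ \sigma^k$ with $\sigma'' : [n-1] \to [m]$. Since composition with a non-injection remains non-injective, condition (ii) for $\sigma$ implies condition (ii) for $\sigma^k$, so the elementary case yields $x = (\sigma^k)^* x'$ for some $x' \in X_{n-1}$. A face $\beta : [l] \hookrightarrow [n-1]$ with $\sigma''\beta$ non-injective lifts via the section $\partial^{k+1}$ of $\sigma^k$ to $\tilde\beta := \partial^{k+1} \beta$, which satisfies $\sigma^k \tilde\beta = \beta$ and hence $\sigma \tilde\beta = \sigma'' \beta$; condition (ii) for $\sigma$ applied to $\tilde\beta$ therefore gives $\beta^* x' = \tilde\beta^* x$ degenerate. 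Thus $x'$ satisfies condition (ii) for $\sigma''$, and the inductive hypothesis produces $x' = (\sigma'')^* y$, so $x = \sigma^* y$.

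The main work, and the expected main obstacle, is the elementary case, proved by induction on $n$. When $n = 1$, condition (ii) applied to the identity face forces $x \in X_1$ to be degenerate, and since the only non-identity surjection out of $[1]$ is $\sigma^0$, this gives $x = (\sigma^0)^* y$. For the induction step, applying (ii) at the identity gives $x = \tau^* z$ for some non-identity surjection $\tau : [n] \to [l]$. If $\tau(k) = \tau(k+1)$, then $\tau$ factors through $\sigma^k$ and we are done; otherwise, choose some $j \neq k$ with $\tau(j) = \tau(j+1)$, factor $\tau = \tau'' \circ \sigma^j$, and set $x' := (\tau'')^* z$, so that $x = (\sigma^j)^* x'$. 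Setting $k' := k$ if $j > k$ and $k' := k - 1$ if $j < k$, the heart of the argument is to verify that $x'$ satisfies condition (ii) for $\sigma^{k'} : [n-1] \to [n-2]$: a face $\beta$ of $[n-1]$ whose image contains $\{k', k'+1\}$ is lifted via $\partial^{j+1}$ (in the case $j > k$) or $\partial^j$ (in the case $j < k$) to a face $\tilde\beta$ of $[n]$ whose image contains $\{k, k+1\}$, and condition (ii) for $x$ with respect to $\sigma^k$ then applies to $\tilde\beta$ and yields $\beta^* x' = \tilde\beta^* x$ degenerate. The inductive hypothesis produces $x' = (\sigma^{k'})^* z'$, and the appropriate simplicial identity for two-fold composites of elementary degeneracies rewrites $x = (\sigma^j)^* (\sigma^{k'})^* z'$ in the required form $(\sigma^k)^* y$.

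The only delicate point is the bookkeeping of indices in this last step: the cases $j > k$ and $j < k$ have to be handled separately, each with its own section of $\sigma^j$ and its own simplicial identity (namely $\sigma^k \sigma^j = \sigma^{j-1} \sigma^k$ when $j > k$, and $\sigma^{k-1} \sigma^j = \sigma^j \sigma^k$ when $j < k$); once the two cases are split, verifying that the lift of $\beta$ still contains $\{k, k+1\}$ in its image and that the simplicial identity reassembles the composition as $(\sigma^k)^* y$ are routine computations.
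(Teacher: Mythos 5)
Your proof is correct, but it follows a genuinely different route from the one in the paper. The paper's argument is: applying (ii) to the identity face shows $x$ is degenerate, say $x = s^* y$ for a non-identity degeneracy $s$; then either $\sigma d$ is injective for every section $d$ of $s$, in which case the auxiliary lemma \ref{lem_section_facto_Delta} (whose proof uses decidability of equality in $[n]$) gives $s = j\sigma$ and hence $x = \sigma^*(j^* y)$, or some section $d$ has $\sigma d$ non-injective, in which case (ii) makes $y = d^* x$ degenerate, so $x = s'^{*} y'$ with $y'$ of strictly smaller dimension, and an induction on dimension concludes. You instead reduce to the elementary degeneracies $\sigma^k$ by factoring $\sigma = \sigma''\sigma^k$ and inducting on $n-m$, and then settle the elementary case by a second induction on $n$, choosing $j \neq k$ with $\tau(j)=\tau(j+1)$ and using the identities $\sigma^k\sigma^j = \sigma^{j-1}\sigma^k$ (for $j>k$) and $\sigma^{k-1}\sigma^j = \sigma^j\sigma^k$ (for $j<k$). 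I checked the delicate points: the lifts $\partial^{k+1}\beta$, $\partial^{j+1}\beta$ and $\partial^{j}\beta$ do satisfy $\sigma\tilde\beta = \sigma''\beta$, respectively $\sigma^j\tilde\beta=\beta$ with image of $\tilde\beta$ containing $\{k,k+1\}$, and the two simplicial identities hold with your indexing, so the argument goes through; it is also constructive, since all the choices involved are finite decidable searches and the inductions are on natural numbers. Comparing the two: the paper's proof is shorter and treats an arbitrary degeneracy in a single induction, at the price of the separate section-factorization lemma; yours avoids that lemma entirely and needs only the standard cosimplicial identities, at the price of a double induction and the $j>k$ versus $j<k$ bookkeeping.
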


\begin{proof}
If $x = \sigma^* y$ then for any such $i$, $i^*x= (\sigma i ) ^* y$ which is degenerated if $\sigma i$ is non-injective, so $(i) \Rightarrow (ii)$.

Conversely, let $x$ satisfying $(ii)$. If $\sigma$ is the identity the result is trivial. If $\sigma$ is not injective, then $x$ is in particular degenerated, i.e. there exist a non-trivial degeneracy $s: [n] \rightarrow [k]$ such that $x=s^*y$. Note that $y = d^*x$ for $d: [k] \rightarrow [n]$ any section of $s$. If for all section $d$ of $s$, $\sigma d$ is injective, then lemma \ref{lem_section_facto_Delta} below shows that $s$ factors as $j \sigma$ for some degeneracy $j:[m] \rightarrow [k]$ and $x=s^*y=\sigma^* j^* y$ is indeed $\sigma$-degenerated. If now $\sigma d$ is non-injective for some section $d$ of $s$, then $y = d^* x$ is degenerated by assumptions, hence one can write $x=s'^* y'$ for $y'$ of lower dimension than $x$ and start the argument above again, an induction on the dimension concludes the proof.
\end{proof}

\begin{lemma} \label{lem_section_facto_Delta}
Let $\sigma:[n] \rightarrow [m] $ and $s:[n] \rightarrow [k]$ be two degeneracy, assume that for all $d:[k] \rightarrow [n]$ a section of $s$, $\sigma d$ is injective, then there exists a (unique) $j:[m] \rightarrow [k]$ such that $s = j \sigma$.
\end{lemma}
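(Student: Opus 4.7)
The plan is to define $j$ by setting $j(\sigma(i)) := s(i)$ for $i \in [n]$; uniqueness is immediate since $\sigma$ is surjective, and once well-definedness is established the monotonicity and surjectivity of $j$ (so that $j$ is a morphism of $\Delta$, and even a degeneracy) follow at once from the corresponding properties of $\sigma$ and $s$ by picking, for any $a \le b$ in $[m]$, elements $i \le i'$ in $[n]$ with $\sigma(i) = a$ and $\sigma(i') = b$. The whole content of the lemma thus reduces to showing that the relation ``$\sigma(i) = \sigma(i')$'' refines the relation ``$s(i) = s(i')$'', i.e.\ that each fiber of $\sigma$ is contained in a fiber of $s$.

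I would argue by contradiction: suppose $\sigma(i) = \sigma(i')$ but $s(i) \ne s(i')$. By decidability of the order on $[n]$ we may assume $i < i'$, and then monotonicity of $s$ together with $s(i) \ne s(i')$ gives $s(i) < s(i')$. I claim there exists an order-preserving section $d : [k] \rightarrow [n]$ of $s$ with $d(s(i)) = i$ and $d(s(i')) = i'$. Given such a $d$, the map $\sigma \circ d$ sends the distinct points $s(i)$ and $s(i')$ to the common value $\sigma(i) = \sigma(i')$, contradicting the hypothesis that $\sigma \circ d$ is injective.

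To build the section, set $d(s(i)) := i$, $d(s(i')) := i'$, and for every other $\ell \in [k]$ take $d(\ell) := \min s^{-1}(\ell)$, a minimum that exists constructively since $s$ is surjective and the fibers are inhabited decidable subsets of $[n]$. The key combinatorial point is that for an order-preserving surjection $s : [n] \twoheadrightarrow [k]$, whenever $\ell_1 < \ell_2$ in $[k]$, every element of $s^{-1}(\ell_1)$ is strictly below every element of $s^{-1}(\ell_2)$ (otherwise applying $s$ would reverse the inequality). Consequently any choice of one representative per fiber yields a strictly order-preserving, hence valid, section of $s$, and in particular the prescribed values at $s(i)$ and $s(i')$ cause no conflict.

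The only part of the argument that needs a moment's attention is this combinatorial fact about fibers of order-preserving surjections between finite ordinals; it is elementary and fully constructive, so I do not anticipate any real obstacle.
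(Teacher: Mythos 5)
Your proof is correct and follows essentially the same route as the paper: the whole content is the implication $\sigma(i)=\sigma(i')\Rightarrow s(i)=s(i')$, proved by exhibiting a section $d$ of $s$ through the two elements and invoking injectivity of $\sigma d$, with decidability of (in)equality in finite ordinals licensing the contrapositive/contradiction step. You merely spell out a bit more explicitly than the paper the construction of the section from fiber representatives and the routine checks that the induced $j$ is order-preserving and surjective.
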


One easily see it is also a necessary condition. 

\begin{proof}
One needs to show that, under the assumption of the lemma, for any two elements $i,j \in [n]$ if $\sigma i = \sigma j$ then $s i = s j$. If $s i \neq s j$, then there is a section $d$ of $s$ such that $ d s i = i$ and $d s j =j$ hence $\sigma j = (\sigma d) (s j)$  and $ \sigma i =(\sigma d) (s i)$, so the injectivity of $\sigma d$ implies that $\sigma i \neq \sigma j$. As equality in $[n]$ is decidable one can take the contraposite and concludes the proof.
\end{proof}

\begin{prop} Let $f :X \rightarrow Y$ be a map between simplicial sets, then the followings conditions are equivalents:
\begin{enumerate}[label=(\roman*)]
\item $f$ is degeneracy detecting.

\item If  $f(x)$ is $\sigma$-degenerated for some degeneracy $\sigma$ then $x$ is $\sigma$-degenerated as well.

\item $f$ has the (unique) right lifting property against all the degeneracy map $\Delta[n] \rightarrow \Delta[m]$.
\end{enumerate}
\end{prop}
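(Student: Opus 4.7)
The plan is to split the equivalence into $(ii) \Rightarrow (i)$, $(i) \Rightarrow (ii)$, and $(ii) \Leftrightarrow (iii)$. The first implication is an immediate specialization, the second invokes lemma~\ref{lem:SigmaDegen_in_terms_of_degen}, and the third is a direct translation of the lifting condition via the Yoneda lemma. I do not foresee any real obstacle here: the one non-trivial ingredient, namely the characterization of $\sigma$-degeneracy in terms of face restrictions, has already been isolated in lemma~\ref{lem:SigmaDegen_in_terms_of_degen}, so each of the three implications reduces to a short symbolic manipulation.

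For $(ii) \Rightarrow (i)$, I would observe that a degenerate cell $f(x)$ is by definition of the form $\sigma^* z$ for some proper (non-identity) degeneracy $\sigma$, so $(ii)$ directly produces $x = \sigma^* x'$; since $\sigma$ is non-injective, $x$ is degenerate. For $(i) \Rightarrow (ii)$, suppose $f(x) = \sigma^* y$ with $\sigma:[n]\to[m]$ a degeneracy. For every face map $i : [k] \to [n]$ such that $\sigma i$ is non-injective, the cell $f(i^* x) = (\sigma i)^* y$ is degenerate, so $(i)$ forces $i^* x$ to be degenerate as well. Lemma~\ref{lem:SigmaDegen_in_terms_of_degen} then yields that $x$ itself is $\sigma$-degenerate.

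For $(ii) \Leftrightarrow (iii)$, I would unpack a lifting problem of $f$ against the simplicial map $\Delta[n] \to \Delta[m]$ induced by a degeneracy $\sigma : [n] \to [m]$: by Yoneda, such a commutative square amounts to a pair $(x \in X_n, y \in Y_m)$ with $f(x) = \sigma^* y$, and a diagonal filler amounts to a cell $\tilde y \in X_m$ satisfying $f(\tilde y) = y$ and $\sigma^* \tilde y = x$. Under this translation $(iii) \Rightarrow (ii)$ is transparent. Conversely, $(ii)$ produces some $x' \in X_m$ with $\sigma^* x' = x$; applying $f$ gives $\sigma^* f(x') = \sigma^* y$, and composing with any section $d : [m] \to [n]$ of $\sigma$ (which exists since $\sigma$ is a surjection in $\Delta$) forces $f(x') = y$, so $\tilde y := x'$ solves the lifting problem. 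Uniqueness of the lift follows from the same sectioning trick: two lifts $\tilde y_1, \tilde y_2$ satisfy $\sigma^* \tilde y_1 = x = \sigma^* \tilde y_2$ and hence coincide after applying $d^*$, which establishes the stronger ``unique'' form of the lifting property stated in $(iii)$.
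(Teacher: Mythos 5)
Your proof is correct and follows essentially the same route as the paper: $(ii)\Rightarrow(i)$ by specialization, $(i)\Rightarrow(ii)$ via lemma~\ref{lem:SigmaDegen_in_terms_of_degen}, and $(ii)\Leftrightarrow(iii)$ by unwinding the lifting square, with uniqueness of the lift coming from the fact that the degeneracy $\Delta[n]\rightarrow\Delta[m]$ is (split) epi. You merely spell out the details the paper leaves implicit; no changes are needed.
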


\begin{proof}

$(ii)$ clearly implies the $(i)$ and the converse is immediate from lemma \ref{lem:SigmaDegen_in_terms_of_degen}. The lifting in $(iii)$ is automatically unique as degeneracy are epimorphisms in the presheaf category and this lifting property is a reformulation of $(ii)$.

\end{proof}

Given a simplicial set $X$, $x \in X([n])$ and $\sigma :[n] \rightarrow [m]$ a degeneracy, one defines $X[(x,\sigma)]$ as the pushout:

\[\begin{tikzcd}
\Delta[n] \ar[d,"\sigma"] \ar[r,"x"] & X \ar[d] \\
\Delta[m] \ar[r] & X[(x,\sigma)]
\end{tikzcd}\]

$X[(x,\sigma)]$ is the universal for map $X \rightarrow Y$ making $x$ ``$\sigma$-degenerated'', i.e. given a morphism $f:X \rightarrow Y$, it factors as $X \rightarrow X[(x,\sigma)]$ if and only if $f(x)= \sigma^*y$ for some $y \in Y([m])$, and such a factorization is unique when it exists.

More generally, given a collection $(x_i \in X([n_i]))_{i \in I}$ and $\sigma_i: [n_i] \rightarrow [m_i]$ one can define an object $X[(x_i,\sigma_i)]$ as the pushout of a coproduct of degeneracy maps, which has the following universal property: a morphism $f:X \rightarrow Y$ factors (uniquely) through $X \rightarrow X[(x_i,\sigma_i)] \rightarrow Y$ if and only if for all $i \in I$, $f(x_i)$ is $\sigma_i$-degenerated.

\begin{definition}

A morphisms is said to be a degeneracy quotient if it is obtain as $X \rightarrow X[(x_i,\sigma_i)]$ for some collection of $x_i \in X([n_i])$ and $\sigma_i:[n_i] \twoheadrightarrow [m_i]$ as above.

\end{definition}

\begin{prop} \label{prop:degenQuo_degenDetect_facto}

Degeneracy quotient and degeneracy detecting map form an orthogonal factorization system.

More precisely, for any morphisms $f: X \rightarrow Y$ its factorization is obtained as:

 \[ X \rightarrow X[(x_i,\sigma_i)] \rightarrow Y \]

where $(x_i,\sigma_i)$ is the collection of all $x_i$ and $\sigma_i$ such that $f(x_i)$ is $\sigma_i$-degenerated. 

\end{prop}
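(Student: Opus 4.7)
The plan is to verify the two defining axioms of an orthogonal factorization system: the unique left lifting of degeneracy quotients against degeneracy detecting maps, and the existence of the stated factorization.

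For the lifting property, consider a degeneracy quotient $p : A \to A' = A[(a_j, \tau_j)]$, a degeneracy detecting map $q : B \to B'$, and a commutative square $v \circ p = q \circ u$. By the universal property defining $A'$, a map $h : A' \to B$ with $h \circ p = u$ corresponds to the data of $u$ together with, for each $j$, a witness that $u(a_j)$ is $\tau_j$-degenerate in $B$. Now $q(u(a_j)) = v(p(a_j))$, and $p(a_j)$ is $\tau_j$-degenerate in $A'$ by the very construction of the pushout, so $q(u(a_j))$ is $\tau_j$-degenerate in $B'$. Condition (ii) of the preceding proposition then yields that $u(a_j)$ is $\tau_j$-degenerate in $B$, producing $h$. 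Compatibility $q \circ h = v$ and uniqueness of $h$ both follow by applying the universal property of $A'$ once more, this time to the map $v \circ p : A \to B'$, comparing two candidate factorizations.

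For the factorization, let $(x_i, \sigma_i)_{i \in I}$ range over all pairs with $f(x_i)$ being $\sigma_i$-degenerate in $Y$. The map $p : X \to X[(x_i, \sigma_i)]$ is then a degeneracy quotient by definition, and the universal property of the pushout produces the induced $g : X[(x_i, \sigma_i)] \to Y$ with $g \circ p = f$. The key technical observation is that $p$ is surjective on cells: any cell $\beta : [k] \to [m_i]$ lying in the summand $\Delta[m_i]$ of the pushout can, using any section $d : [m_i] \to [n_i]$ of the degeneracy $\sigma_i$ (which exists canonically in $\Delta$, e.g.\ by sending $j$ to the least element of $\sigma_i^{-1}(j)$), be rewritten as $\beta = \sigma_i \circ (d\beta)$, so that $\beta$ is identified in the pushout with the cell $(d\beta)^* x_i$ coming from $X$. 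Using this, it suffices to verify condition (ii) of the preceding proposition for $g$: given $z \in X[(x_i, \sigma_i)]_n$ with $g(z) = \sigma^* w$ for some $w \in Y_m$ and degeneracy $\sigma : [n] \to [m]$, lift $z$ to $\tilde z \in X_n$; then $f(\tilde z) = g(z) = \sigma^* w$ places $(\tilde z, \sigma)$ in our collection, and the corresponding leg $\Delta[m] \to X[(x_i, \sigma_i)]$ of the pushout provides the required $m$-cell $\bar z$ with $\sigma^* \bar z = p(\tilde z) = z$.

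The main subtlety is the surjectivity of $p$: everything else is formal and relies only on the universal property of the pushout together with condition (ii) of the previous proposition. Since sections in $\Delta$ can be chosen canonically, the whole argument is fully constructive, with no need for any form of the axiom of choice, notwithstanding the potentially large index set $I$ of the factorization collection.
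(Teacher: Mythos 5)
Your proof is correct and takes essentially the same route as the paper's: both the factorization and the orthogonality are read off from the universal property of $X[(x_i,\sigma_i)]$ together with the $\sigma$-degeneracy condition (ii) characterizing degeneracy detecting maps. The only difference is that you spell out the levelwise surjectivity of a degeneracy quotient via sections of degeneracies in $\Delta$, a point the paper's argument uses implicitly when it says every cell of $X[(x_i,\sigma_i)]$ is the image of a cell of $X$.
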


Note that this is essentially nothing more than the small object argument, though it is notable that in this case it converges in a single step.

\begin{proof}

It is clear from the universal property of $X[(x_i,\sigma_i)]$ that one has a factorization as in the lemma, and the first map is by definition a degeneracy quotient. The map $X[(x_i,\sigma_i)] \rightarrow Y$ is degeneracy detecting: given $x \in X[(x_i,\sigma_i)]$, it is the image of a $x_0 \in X$, if the image of $x$ is degenerated in $Y$ one has $f(x_0)= \sigma^* y$, hence $(x_0,\sigma)$ appears in the definition of $ X[(x_i,\sigma_i)]$, which forces the image of $x_0$, i.e. $x$, to be degenerated.

The orthogonality of the two class is relatively immediate as well: given a lifting problem:

\[\begin{tikzcd}
X \ar[r] \ar[d] & A \ar[d] \\
X[(x_i,\sigma_i) \ar[r] \ar[ur,dotted] & B \\
\end{tikzcd}\]

where the right map is degeneracy detecting, then a diagonal filling exists if and only the image of the $x_i$ in $A$ satisfies the appropriate degeneracy conditions. As their images in $B$ satisfies them because of the existence of the square, and as the map $A \rightarrow B$ is degeneracy detecting, this is immediate.

\end{proof}

The following is more or less a reformulation of what is a degeneracy quotient that will be convenient:

\begin{lemma} \label{CharacDegenLemma}

An epimorphism of simplicial set $p:A \rightarrow B$ is a degeneracy quotient if and only if for any map $f:A \rightarrow X$, the map $f$ factors through $p$ if and only if the following condition holds:

\begin{equation} \tag{D} \label{eq:degenCondition} \forall a \in A([n]) \quad p(a) \text{ is degenerated } \Rightarrow f(a) \text{ is degenerated.} \end{equation}

\end{lemma}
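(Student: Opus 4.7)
The plan is to prove each direction of the stated equivalence, and the key tool will be lemma \ref{lem:SigmaDegen_in_terms_of_degen} which reduces $\sigma$-degeneracy to the plain degeneracy of certain face restrictions.

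For the forward direction, assume $p : A \to B$ is a degeneracy quotient, so $B = A[(x_i,\sigma_i)]$ for some indexing collection. If $f : A \to X$ factors through $p$ as $\overline{f} \circ p$, then for any $a \in A$ with $p(a) = \sigma^* y$ degenerated, $f(a) = \overline{f}(\sigma^* y) = \sigma^* \overline{f}(y)$ is degenerated, so (D) holds. Conversely, assuming (D), I would use the universal property of $A[(x_i,\sigma_i)]$, which reduces the problem of factoring $f$ through $p$ to showing that each $f(x_i)$ is $\sigma_i$-degenerated. Here I invoke lemma \ref{lem:SigmaDegen_in_terms_of_degen}: it is enough to check that for every face $j : [k] \to [n_i]$ with $\sigma_i j$ non-injective, the cell $j^* f(x_i) = f(j^* x_i)$ is degenerated. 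But $p(x_i)$ is $\sigma_i$-degenerated by assumption, so $j^* p(x_i) = p(j^* x_i)$ is degenerated, and condition (D) then gives the required degeneracy of $f(j^* x_i)$.

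For the reverse direction, suppose $p : A \to B$ is an epimorphism satisfying the factoring criterion. Form the degeneracy quotient $q : A \to A[(x_i,\sigma_i)]$ indexed by \emph{all} pairs $(x_i, \sigma_i)$ with $p(x_i)$ $\sigma_i$-degenerated in $B$; note that $q$ is itself an epimorphism as a pushout of a coproduct of epimorphic degeneracies. I then want to produce a canonical isomorphism between $p$ and $q$ as quotients of $A$. First, $q$ satisfies condition (D) with respect to $p$: if $p(a) = \sigma^* y$, then the pair $(a, \sigma)$ lies in our indexing collection, so $q(a)$ is $\sigma$-degenerated by the very construction of $A[(x_i,\sigma_i)]$. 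By the hypothesis on $p$ this produces a factorization $\overline{q} : B \to A[(x_i,\sigma_i)]$ with $\overline{q} \circ p = q$. Second, $p$ factors through $q$ by the universal property of $A[(x_i, \sigma_i)]$, since $p(x_i)$ is $\sigma_i$-degenerated for every $i$ by definition, giving $\overline{p} : A[(x_i,\sigma_i)] \to B$ with $\overline{p} \circ q = p$. Since both $p$ and $q$ are epimorphisms, $\overline{p}$ and $\overline{q}$ must be mutually inverse, exhibiting $p$ as (isomorphic to) a degeneracy quotient.

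The main obstacle — modest but essential — is the very last part of the forward direction: extracting the specific $\sigma_i$-degeneracy of $f(x_i)$ from the apparently weaker condition (D), which only speaks of plain degeneracy. Lemma \ref{lem:SigmaDegen_in_terms_of_degen} is precisely what makes this work, by equating $\sigma$-degeneracy with a family of plain degeneracy conditions on restrictions — exactly the type of statement (D) can handle. The reverse direction is then mostly a formal epimorphism argument once the correct $q$ is produced and the round trip is checked.
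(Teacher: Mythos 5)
Your proof is correct, and it rests on the same two ingredients as the paper's: lemma \ref{lem:SigmaDegen_in_terms_of_degen} to convert condition (\ref{eq:degenCondition}) into the $\sigma_i$-degeneracy statements required by the universal property of $A[(x_i,\sigma_i)]$, and the canonical quotient indexed by \emph{all} pairs $(a,\sigma)$ with $p(a)$ $\sigma$-degenerated. The one real difference is organisational: the paper routes both directions through proposition \ref{prop:degenQuo_degenDetect_facto}, observing that the factoring criterion is literally the universal property of the maximal quotient and that any degeneracy quotient is isomorphic to it by the orthogonal factorization system, whereas you verify the forward direction directly for an arbitrary presentation $(x_i,\sigma_i)$ (so you never need \ref{prop:degenQuo_degenDetect_facto}) and, for the converse, build the two comparison maps $\overline{p}$, $\overline{q}$ by hand and cancel them using that $p$ and $q$ are epimorphisms. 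This makes your argument slightly more self-contained and elementary; the paper's version is shorter because it reuses the already-established factorization system. Both are constructively unproblematic, and your epimorphism claim for $q$ (pushout of a coproduct of degeneracies) is sound.
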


Note that if such a factorization exists then condition (\ref{eq:degenCondition}) holds without any assumption on $p$, so that if $p$ is a degeneracy quotient then a factorization exists if and only condition (\ref{eq:degenCondition}) holds.

\begin{proof}

It follows from \ref{lem:SigmaDegen_in_terms_of_degen}, that condition (\ref{eq:degenCondition}) is equivalent to:

\begin{equation} \label{eq:degenCondition_alt} \tag{D'}  \forall a \in A([n]) \quad p(a) \text{ is $\sigma$-degenerated } \Rightarrow f(a) \text{ is $\sigma$-degenerated.} \end{equation} 

A factorization of $f$ through $p$ is always unique as $p$ is an epimorphism, so saying that $f$ factors through $p$ if and only if condition (\ref{eq:degenCondition_alt}) (or (\ref{eq:degenCondition}) )  holds is equivalent to saying that $B$ (endowed with the map $p:A \rightarrow B$) has the universal property of $A[(a_i,\sigma_i)]$ where $(a_i,\sigma_i)$ are all the pairs of $a_i \in A([n])$ such that $p(a_i)$ is $\sigma_i$-degenerated. Hence this indeed holds if and only if $A \rightarrow B$ is a degeneracy quotient, as because of proposition \ref{prop:degenQuo_degenDetect_facto}, any degeneracy quotient $p: A \rightarrow B$ is isomorphic to $A \rightarrow  A[(a_i,\sigma_i)]$ where $(a_i,\sigma_i)$ are all the pairs of $a_i \in A([n])$ such that $p(a_i)$ is $\sigma_i$-degenerated.
\end{proof}

This observation has a quite interesting consequence that will be extremely useful to us, and in fact is the unique reason why we are interested in degeneracy quotient in the present paper:

\begin{lemma} \label{lem:decidability_lifting_degenQuo} Given $p:A \rightarrow B$ a degeneracy quotient of finite decidable simplicial sets, and $f :A \rightarrow X$ a morphisms to a cofibrant simplicial set. Then it is decidable if there exists a diagonal lift:

\[\begin{tikzcd}
A \ar[d] \ar[r] & X \\
B \ar[dotted,ur,"?"{description}]
\end{tikzcd} \]

\end{lemma}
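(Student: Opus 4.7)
The plan is to reduce the existence of a diagonal lift to a finite conjunction of decidable propositions via Lemma \ref{CharacDegenLemma}. Since $p : A \to B$ is a degeneracy quotient (in particular an epimorphism), that lemma says that a factorization of $f$ through $p$ (equivalently, a diagonal lift in the given square) exists if and only if condition (\ref{eq:degenCondition}) holds for $f$: for every cell $a \in A([n])$, whenever $p(a)$ is degenerate in $B$, the cell $f(a)$ is degenerate in $X$. So the question reduces to deciding condition (\ref{eq:degenCondition}).

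The key observation is that condition (\ref{eq:degenCondition}) only needs to be checked on the non-degenerate cells of $A$. Indeed, if $a \in A$ is itself degenerate, say $a = \tau^* a_0$ for some non-identity degeneracy $\tau$, then $f(a) = \tau^* f(a_0)$ is automatically degenerate, so the implication in (\ref{eq:degenCondition}) holds trivially. Hence (\ref{eq:degenCondition}) is equivalent to the same implication restricted to non-degenerate cells of $A$. Since $A$ is finite and decidable, it has only finitely many non-degenerate cells, and by cofibrancy of $A$ we can enumerate them effectively.

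For each such non-degenerate cell $a$, both halves of the implication are decidable propositions: deciding whether $p(a)$ is degenerate in $B$ is possible because $B$ is finite and decidable, hence cofibrant, so that the degeneracy status of its cells is decidable; and deciding whether $f(a)$ is degenerate in $X$ is possible because $X$ is cofibrant by hypothesis. A finite conjunction of decidable implications between decidable propositions is decidable, which yields the desired decidability of the existence of a lift.

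The only delicate point is the reduction to non-degenerate cells, which is what actually requires $A$ to be cofibrant (so that Eilenberg--Zilber-type decomposition into a unique degenerate-of-non-degenerate form is available, cf.\ Lemma 5.1.2 in \cite{henry2018weakmodel}); everything else is a routine finite case analysis. No genuine obstacle is expected, and the argument does not rely on the structure theory of degeneracy quotients beyond the characterization of Lemma \ref{CharacDegenLemma}.
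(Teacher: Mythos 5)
Your proposal is correct and follows essentially the same route as the paper: reduce via Lemma \ref{CharacDegenLemma} to checking condition (\ref{eq:degenCondition}), note that it holds automatically on degenerate cells so only the finitely many non-degenerate cells of $A$ need testing, and observe that both sides of the implication are decidable (degeneracy in $B$ since $B$ is finite and decidable, degeneracy in $X$ since $X$ is cofibrant). The only cosmetic difference is that you invoke cofibrancy of $A$ and Eilenberg--Zilber for the restriction to non-degenerate cells, whereas finiteness and decidability of $A$ already suffice for that step.
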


\begin{proof}
One can use condition (\ref{eq:degenCondition}) of lemma \ref{CharacDegenLemma} to test whether such a diagonal lift exists. As $B$ is finite and decidable, degeneracy in $B$ is decidable. So for each cell $a \in A$ it is decidable if `` $p(a) \text{ is degenerated } \Rightarrow f(a) \text{ is degenerated}$'' as both side of the implication are decidable. Moreover this condition is automatically valid for all degenerated cells of $A$, so it is necessary to test it only on a finite number of cells to know whether $f$ factors through $p$, which makes the validity of condition (\ref{eq:degenCondition}) decidable and hence the existence of a diagonal lift decidable.
\end{proof}

The following lemma is obvious, but will be a convenient a technical tools to show that certain maps are degeneracy quotient:

\begin{lemma}\label{lem:test_degen_quotient} Let $p:A \rightarrow B$ be an epimorphism. One considers the equivalence relation $\sim_p $ on $A$ generated by:

\begin{itemize}

\item If $p(a)$ is $\sigma$-degenerated, then $a \sim_p \sigma^* t^* a$ for any section $t$ of $\sigma$.

\item $\sim_p$ is compatible with all the faces and degeneracy maps of $A$.

\end{itemize}

Then $p$ is a degeneracy quotient if and only if any two $a,a'\in A$ such that $pa = pa'$ one has $ a \sim_p a'$.

\end{lemma}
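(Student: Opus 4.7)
The plan is to reduce both directions to Lemma~\ref{CharacDegenLemma} via the simplicial set $A/{\sim_p}$ equipped with its quotient map $q : A \to A/{\sim_p}$. The second clause in the definition of $\sim_p$ is exactly what is needed for $A/{\sim_p}$ to carry a well-defined simplicial structure (the face and degeneracy operations descend from $A$), and the first clause guarantees that $q$ satisfies condition~\eqref{eq:degenCondition}: whenever $p(a)$ is $\sigma$-degenerate with $\sigma$ non-identity, picking a section $t$ of $\sigma$ yields $a \sim_p \sigma^* t^* a$, so $q(a) = \sigma^* q(t^* a)$ is degenerate.

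For the forward implication I would assume that $p$ is a degeneracy quotient, apply Lemma~\ref{CharacDegenLemma} to the map $q$ to obtain a factorization $q = g \circ p$ for some $g : B \to A/{\sim_p}$, and then read off immediately that $p(a) = p(a')$ forces $q(a) = q(a')$, i.e.~$a \sim_p a'$.

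For the converse I would assume the hypothesis and check the condition of Lemma~\ref{CharacDegenLemma}: for any simplicial map $f : A \to X$ satisfying~\eqref{eq:degenCondition}, I need to construct a factorization of $f$ through $p$. Since $p$ is an epimorphism of simplicial sets, hence levelwise surjective, this factorization exists precisely when $f(a) = f(a')$ whenever $p(a) = p(a')$; by the standing hypothesis this reduces to proving $a \sim_p a' \Rightarrow f(a) = f(a')$. The relation ``$f(a) = f(a')$'' is an equivalence relation which is compatible with faces and degeneracies since $f$ is simplicial, so it suffices to verify the generating pairs of $\sim_p$. Given $p(a)$ that is $\sigma$-degenerate with section $t$, the equivalent condition~\eqref{eq:degenCondition_alt} (see Lemma~\ref{lem:SigmaDegen_in_terms_of_degen}) yields $f(a) = \sigma^* y$ for some $y$, and a direct computation gives
\[ f(\sigma^* t^* a) = \sigma^* t^* \sigma^* y = \sigma^* (\sigma t)^* y = \sigma^* y = f(a), \]
using $\sigma t = \mathrm{id}$.

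The argument is essentially a mechanical unwinding of the universal property packaged in Lemma~\ref{CharacDegenLemma} against the generators of $\sim_p$; the only ingredient drawn from outside is the constructively valid fact that an epimorphism of presheaves is levelwise surjective, which suffices to build the factorization of $f$ through $p$ without any appeal to choice, as the value at each $b \in B([n])$ is uniquely determined by the condition that $f(a) = f(a')$ whenever $p(a) = p(a') = b$.
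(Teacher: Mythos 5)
Your argument is correct, and it is a legitimate proof of the lemma; it differs from the paper's proof mainly in packaging. The paper argues in one stroke: $\sim_p$ is by construction exactly the simplicial congruence by which one must quotient $A$ to obtain $A[(a_i,\sigma_i)]$ for the family of all pairs with $p(a_i)$ $\sigma_i$-degenerate, so by (the second half of) Proposition \ref{prop:degenQuo_degenDetect_facto} the map $p$ is a degeneracy quotient precisely when the comparison $A[(a_i,\sigma_i)]\rightarrow B$ is an isomorphism, i.e.\ precisely when $\sim_p$ coincides with the kernel pair of $p$. You instead route everything through Lemma \ref{CharacDegenLemma}, introducing the auxiliary quotient $A/\!\sim_p$ for the forward direction and building the factorization of an arbitrary $f$ satisfying \eqref{eq:degenCondition} cell by cell for the converse, with the verification on the generators of $\sim_p$ (via the strengthened condition coming from Lemma \ref{lem:SigmaDegen_in_terms_of_degen} and the computation $\sigma^*t^*\sigma^*y=\sigma^*y$). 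In effect you are re-proving, in explicit terms, the paper's ``one easily sees'' identification of $\sim_p$ with the presenting congruence; what your version buys is that this identification is actually checked, and that the choice-free construction of the factorization through the epimorphism $p$ (unique value at each $b\in B([n])$, so no appeal to choice) is made explicit, which fits the constructive setting of the paper. Two small points you should still record: when invoking Lemma \ref{CharacDegenLemma} in the converse you also need the trivial half of its inner equivalence (a factorization through $p$ always forces \eqref{eq:degenCondition}), and the map $g:B\rightarrow X$ you define must be observed to be simplicial, which follows from levelwise surjectivity of $p$ and the simpliciality of $f$; both are routine.
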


Note that for any morphisms, $a \sim_p a' \Rightarrow p a = pa'$.

\begin{proof}
One easily see that $\sim_p$ is exactly the simplicial equivalence relation by which one needs to quotient $A$ to obtain $A[(a_i,\sigma_i)]$ where $(a_i,\sigma_i)$ is the family of all $a_i$ such that $p(a_i)$ is $\sigma_i$ degenerated in $B$. By the second half of proposition \ref{prop:degenQuo_degenDetect_facto}, the map $p$ is a degeneracy quotient if and only if the second maps in the factorization $A \rightarrow A[(a_i,\sigma_i)] \rightarrow B$ is an isomorphism, which happens if and only if the relation $\sim_p$ is equivalent to $p(a) = p(a')$.
\end{proof}

We continue with a proposition that will be convenient to get examples of degeneracy quotient (see for example the proof of lemma \ref{lem:Decidability_for_ExX} for examples).

\begin{prop} \label{Prop:retractOfPosetAndDegeneracyQuotient}

Let $P$ be a poset with an idempotent order preserving endomorphism $\pi$ satisfying either $\forall x, \pi x \leqslant x$ or $\forall x, \pi x \geqslant x$. Let $Q = \pi P$. Then the morphisms between the simplicial nerve:

\[ N(P) \rightarrow N(Q) \]

induced by $\pi:P \rightarrow Q$ is a degeneracy quotient.

\end{prop}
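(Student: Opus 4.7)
The plan is to invoke the criterion of Lemma~\ref{lem:test_degen_quotient}. The map $N(\pi):N(P)\to N(Q)$ is surjective since $\pi:P\to Q$ is, so it suffices to show that whenever two $n$-chains $(x_0\leqslant\cdots\leqslant x_n)$ and $(y_0\leqslant\cdots\leqslant y_n)$ in $P$ satisfy $\pi x_i=\pi y_i$ for all $i$, one has $(x_0,\dots,x_n)\sim_p (y_0,\dots,y_n)$. By transitivity this reduces to proving $(x_0,\dots,x_n)\sim_p(\pi x_0,\dots,\pi x_n)$ for every chain.

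I will treat the case $\pi x\leqslant x$, the dual case being symmetric. The core idea is to interpolate between the original chain and its image by the sequence
\[ \beta_i = (\pi x_0,\dots,\pi x_{i-1},x_i,x_{i+1},\dots,x_n)\in N(P)_n,\qquad i=0,\dots,n+1, \]
so that $\beta_0=(x_0,\dots,x_n)$ and $\beta_{n+1}=(\pi x_0,\dots,\pi x_n)$. The monotonicity $\pi x_{i-1}\leqslant \pi x_i\leqslant x_i$ (using order-preservation of $\pi$ and the hypothesis) makes each $\beta_i$ a valid chain, and by induction it suffices to show $\beta_i\sim_p\beta_{i+1}$ for each $i$.

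To produce this relation I would introduce the auxiliary $(n+1)$-chain $\alpha_i=(\pi x_0,\dots,\pi x_i,x_i,x_{i+1},\dots,x_n)$, valid for the same reason. Its image $p(\alpha_i)=(\pi x_0,\dots,\pi x_i,\pi x_i,\pi x_{i+1},\dots,\pi x_n)$ is $\sigma^i$-degenerate, obtained by doubling $\pi x_i$ in $(\pi x_0,\dots,\pi x_n)$. The degeneracy $\sigma^i:[n+1]\to[n]$ admits exactly two sections, namely the face maps $t_0$ and $t_1$ skipping positions $i+1$ and $i$ respectively; a direct check shows $t_0^*\alpha_i=\beta_{i+1}$ and $t_1^*\alpha_i=\beta_i$. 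Applying the first clause in the definition of $\sim_p$ to both sections then yields $\alpha_i\sim_p(\sigma^i)^*\beta_{i+1}$ and $\alpha_i\sim_p(\sigma^i)^*\beta_i$, hence $(\sigma^i)^*\beta_{i+1}\sim_p(\sigma^i)^*\beta_i$. Since $\sim_p$ is compatible with face maps and $t_0$ is a section of $\sigma^i$, applying $t_0^*$ on both sides cancels the $(\sigma^i)^*$ and leaves $\beta_{i+1}\sim_p\beta_i$.

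Iterating gives $\beta_0\sim_p\beta_{n+1}$, which is the desired conclusion. The case $\pi x\geqslant x$ is obtained by replacing $\alpha_i$ with $(x_0,\dots,x_i,\pi x_i,\pi x_{i+1},\dots,\pi x_n)$ and running the same argument. There is no real obstacle: the only subtle point is to check that the hypothesis $\pi x\leqslant x$ (resp.\ $\pi x\geqslant x$) is exactly what makes the interpolating chains $\alpha_i$ and $\beta_i$ order-preserving, and to keep track of the face/degeneracy bookkeeping; these are routine once the interpolation is written down.
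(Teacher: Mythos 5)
Your proof is correct and follows essentially the same route as the paper's: both interpolate between a chain and its image by $\pi$ via the chains $(\pi x_0\leqslant\dots\leqslant\pi x_{i-1}\leqslant x_i\leqslant\dots\leqslant x_n)$, and relate consecutive ones through the auxiliary $(n+1)$-chain obtained by inserting $\pi x_i$, whose image in $N(Q)$ is $\sigma^i$-degenerate, concluding via the criterion of Lemma~\ref{lem:test_degen_quotient}. Your write-up merely makes explicit the two-sections bookkeeping that the paper leaves implicit, so there is nothing to add.
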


\begin{proof}

We assume that $\pi x \leqslant x$. The other case follows by simply reversing the order relation on $P$ and on all objects of the category $\Delta$.

Let $p_0 \leqslant p_1 \leqslant \dots \leqslant p_n$ be an element of $N(P)_n$ and assumes that $p_0,\dots,p_{i-1} \in Q$, then one forms

\[ p_0 \leqslant p_1 \leqslant \dots \leqslant p_{i-1} \leqslant \pi p_{i} \leqslant p_{i} \leqslant \dots \leqslant p_n \]

It is an element of $N(P)_{n+1}$ whose image in $Q$ is degenerated as $\sigma^{i*} ( \pi p_0 \leqslant \dots \leqslant \pi p_n)$. This implies that in $N(P)$:

\[(p_0 \leqslant \dots  \leqslant p_n) \sim (p_0 \leqslant \dots \leqslant p_{i-1} \leqslant \pi p_{i} \leqslant p_{i+1} \leqslant \dots \leqslant p_n) \]

Hence using this for all $i$ from $0$ to $n$, one obtains that for any sequence $p_0 \leqslant \dots \leqslant p_n$ all the

\[ ( \pi p_0 \leqslant \dots \leqslant \pi p_{i-1} \leqslant p_{i} \leqslant \dots \leqslant p_n) \]

for $i=0,\dots, n+1$ are equivalent. In particular any sequence is equivalent to its image by $\pi$ and finally any two sequences whose image in $N(Q)$ are the same are equivalent.

\end{proof}

We finish with a proposition that will only be useful in future work (\cite{Gambino2019simplicial}):

\begin{prop} \label{prop:DQ_stable_under_pullback} The class of degeneracy quotient is stable under pullback. \end{prop}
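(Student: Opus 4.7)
The plan is to reduce the claim to the case of a single degeneracy map, using the locally cartesian closed structure of $\widehat{\Delta}$. For any $g : C \to B$ the pullback functor $g^* : \widehat{\Delta}/B \to \widehat{\Delta}/C$ is a left adjoint (to dependent product), hence preserves all colimits. Writing a given degeneracy quotient $p : A \to B$ as a pushout of a coproduct of degeneracy maps $\sigma_i : \Delta[n_i] \twoheadrightarrow \Delta[m_i]$, and letting $Z_i := \Delta[m_i] \times_B C$, the pulled back map $p' : A \times_B C \to C$ is identified as the pushout of $\coprod_i (\Delta[n_i] \times_{\Delta[m_i]} Z_i) \to \coprod_i Z_i$. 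Since by Proposition~\ref{prop:degenQuo_degenDetect_facto} degeneracy quotients form the left class of an orthogonal factorization system---and are therefore closed under coproducts and pushouts---the proposition reduces to the sub-claim that for any degeneracy $\sigma : \Delta[n] \twoheadrightarrow \Delta[m]$ and any $f : Z \to \Delta[m]$, the projection $\pi : W = \Delta[n] \times_{\Delta[m]} Z \to Z$ is a degeneracy quotient.

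For the sub-claim I would apply Lemma~\ref{lem:test_degen_quotient}. The projection $\pi$ is an epimorphism because any poset section $t : [m] \to [n]$ of $\sigma$ yields a section $z \mapsto (t \circ fz, z)$ of $\pi$. For the second condition, given $(\alpha, z), (\alpha', z) \in W_k$ with common image $z \in Z_k$ (so $\sigma \alpha = \sigma \alpha' = fz$), I would show $(\alpha, z) \sim_\pi (\alpha', z)$. The set of $\alpha : [k] \to [n]$ lifting $fz$ is connected by \emph{elementary flips}, i.e.\ changing $\alpha(i_0) = r$ to $\alpha'(i_0) = r+1$ with $\sigma(r) = \sigma(r+1)$ and the result still non-decreasing, so it suffices to realize each such flip in $\sim_\pi$. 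For this one defines $\tilde\alpha : [k+1] \to [n]$ by inserting the pair $r, r+1$ at positions $i_0, i_0+1$, obtaining a cell $(\tilde\alpha, \sigma^{i_0*} z) \in W_{k+1}$ whose projection is $\sigma^{i_0}$-degenerate. Applying the direct generator of $\sim_\pi$ with the two sections $d^{i_0}, d^{i_0+1}$ of $\sigma^{i_0}$, and using that by construction $d_{i_0}\tilde\alpha = \alpha'$ and $d_{i_0+1}\tilde\alpha = \alpha$, gives
\[
(\alpha \sigma^{i_0}, \sigma^{i_0*} z) \;\sim_\pi\; (\tilde\alpha, \sigma^{i_0*} z) \;\sim_\pi\; (\alpha' \sigma^{i_0}, \sigma^{i_0*} z),
\]
and then applying $d_{i_0+1}$ via the compatibility of $\sim_\pi$ with face maps yields $(\alpha, z) \sim_\pi (\alpha', z)$ as required.

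The only real obstacle is the combinatorial observation that any two preimages of $fz$ under $\sigma$ are connected by elementary flips; this is routine once one observes that, restricted to each fiber of $\sigma$ over a value of $fz$, the preimages parameterize non-decreasing maps between finite intervals of integers, and any two such are connected by adjacent swaps. Everything else reduces to bookkeeping with the simplicial identities $d_{i_0}s_{i_0} = d_{i_0+1}s_{i_0} = \mathrm{id}$.
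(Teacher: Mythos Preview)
Your argument is correct. The outer reduction---using that $g^*$ preserves colimits and that the left class of an orthogonal factorization system is closed under coproducts and pushouts---is exactly what the paper does. The difference lies in how the key sub-claim (that the pullback of a single degeneracy $\sigma:\Delta[n]\twoheadrightarrow\Delta[m]$ along an arbitrary $f:Z\to\Delta[m]$ is a degeneracy quotient) is handled.

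The paper proceeds in two further steps: it first reduces again, via the canonical colimit $Z=\colim_{\Delta[k]\to Z}\Delta[k]$ and universality of colimits, to the case $Z=\Delta[k]$; there the pullback is the nerve of the poset $[k]\times_{[m]}[n]$, and Proposition~\ref{Prop:retractOfPosetAndDegeneracyQuotient} applies directly, using the idempotent induced by the minimal section of $\sigma$. Your approach instead attacks general $Z$ in one shot via Lemma~\ref{lem:test_degen_quotient}, by an explicit combinatorial argument: any two lifts $\alpha,\alpha'$ of $fz$ through $\sigma$ are connected by elementary flips (which is clear since the fibers of $\sigma$ are intervals and one can walk any lift down to the minimal lift one coordinate at a time), and each flip is realized in $\sim_\pi$ by the $(k{+}1)$-cell $(\tilde\alpha,\sigma^{i_0*}z)$ you describe. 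The bookkeeping with $d^{i_0},d^{i_0+1}$ and the simplicial identity $\sigma^{i_0}d^{i_0+1}=\mathrm{id}$ is correct.

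Both routes are constructive. The paper's version is shorter because it recycles Proposition~\ref{Prop:retractOfPosetAndDegeneracyQuotient}; yours avoids the second colimit reduction and gives a more hands-on picture of why the relation $\sim_\pi$ saturates the fibers, at the cost of a small explicit combinatorial lemma that you correctly identify as the only nontrivial point.
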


\begin{proof} 
First we show that given a pullback of the form:

\[
\begin{tikzcd}
 P \ar[r] \ar[d,"\phi"{swap}] \ar[dr,phantom,"\lrcorner"{very near start}] & \Delta[n] \ar[d,"\sigma"] \\
  \Delta[k] \ar[r,"f"] & \Delta[m] \\
\end{tikzcd}
\]

where $\sigma$ is a degeneracy map, the map $\phi$ is a degeneracy quotient. This is proved using proposition \ref{Prop:retractOfPosetAndDegeneracyQuotient}. Indeed in such a pullback $P$ is nerve of the pullback of posets, that we will also denote $P$ (because the nerve functor commutes to pullback). We will show that the map $P \rightarrow [k]$ is of the form of proposition \ref{Prop:retractOfPosetAndDegeneracyQuotient}. The map $\sigma: [n] \twoheadrightarrow [m]$ is of this form, with the section $[n] \rightarrow [m]$ sending each $i \in [m]$ to the smallest element of the fiber, this gives an order preserving idempotent $\pi: [n] \rightarrow [n]$ such that $\pi x \leqslant x$. This induce an idempotent on $P$ sending a pair $(i,j)$ (with $i \in [k]$, $ j \in [n]$) to $\pi'(i,j) = (i,\pi j)$. This is still an element of $P$, $\pi'(i,j) \leqslant (i,j)$ it is idempotent, and its image identifies naturally with $[k]$.

Hence $\phi : P \rightarrow \Delta[k]$ is indeed a degeneracy quotient by proposition \ref{Prop:retractOfPosetAndDegeneracyQuotient}. We now show that given any pullback of the form:

\[
\begin{tikzcd}
 P \ar[r] \ar[d,"\phi"{swap}] \ar[dr,phantom,"\lrcorner"{very near start}] & \Delta[n] \ar[d,"\sigma"] \\
  X \ar[r,"f"] & \Delta[m] \\
\end{tikzcd}
\]

for a degeneracy $\sigma$, the map $\phi$ is a degeneracy quotient.

Indeed, one write:

\[ X = \colim_{\Delta[k] \rightarrow X} \Delta[k]\]

Given a $x: \Delta[k] \rightarrow X$ one write $P_x$ the pullback:

\[
\begin{tikzcd}
P_x \ar[d,"\phi_x"{swap}] \ar[r] \ar[dr,phantom,"\lrcorner"{very near start}] & P \ar[r] \ar[d,"\phi"{swap}] \ar[dr,phantom,"\lrcorner"{very near start}] & \Delta[n] \ar[d,"\sigma"] \\
\Delta[k] \ar[r] &  X \ar[r,"f"] & \Delta[m] \\
\end{tikzcd}
\]

All map $\phi_x$ are degeneracy quotient by the first part of the proof.  As the category of simplicial sets is a topos, colimits are universal, one has the morphism $\phi$ is the colimit of the arrows $\phi_x$ (in the category of arrows). As the class of degeneracy quotient is the left class of an orthogonal factorization system, the colimit $\phi$ is also a degeneracy quotient. To give an explicit argument: given a lifting problem of $\phi$ against a degeneracy detecting map one can construct for each $x$ a lifting:

\[
\begin{tikzcd}
P_x \ar[d,"\phi_x"] \ar[r] & P \ar[d,"\phi"] \ar[r] & A \ar[d] \\
\Delta[k] \ar[r] \ar[urr,dotted] & X \ar[r] & B \\  
\end{tikzcd}
\]

By uniqueness of the lifts, they will all be compatible and produces a morphisms from the colimits to $A$ making the square commutes.

Finally we can prove the claim in the proposition. Given a morphism $f:X \rightarrow Y$ any degeneracy map $\Delta[n] \rightarrow \Delta[m]$ over $Y$ (i.e with $\delta[m] \rightarrow Y$) is sent by the pullback functor $\widehat{\Delta}_{/Y} \rightarrow \widehat{\Delta}_{/X} $ to a degeneracy quotient. send degeneracy quotient to degeneracy quotient. But a general degeneracy quotient is pushout of coproduct of degeneracy map, this coproduct and pushout are preserved by the pullback functor (because the category of simplicial sets is cartesian closed), and coproduct of pushout of degeneracy are degeneracy quotient so this concludes the proof.
\end{proof}

\subsection{P-structures}
\label{subsec:PStructure}
This section recalls the notion of $P$-structure introduced in \cite{moss2015another} with some minor modification to make it more suitable to the constructive context. A ``P-structure'' on a morphism $f:A \rightarrow B$ is essentially a recipe for constructing it as an iterated pushout of (coproduct of) horn inclusion $\Lambda^i[n] \hookrightarrow \Delta[n]$. The general idea of this definition is that in such an iterated pushout cells are added by pairs: each pushout by a horn inclusion $\Lambda^i[n] \rightarrow \Delta[n] $ adds exactly two non-degenerate cells:

\begin{itemize}

\item[(\I)] The cell $P$ corresponding to the identity of $\Delta[n]$.

\item[(\II)] The cell $F$ corresponding to the the $i$-th face $\partial^i[n] : \Delta[n-1] \rightarrow \Delta[n]$.

\end{itemize}

These two cells are connected by $F= d_i P$. So if $A \hookrightarrow B$ is constructed by iterating such pushout, then one can partition the non-degenerate cells of $B$ that are not in $A$ into ``type $\I$'' and ``type $\II$'' and there should be a bijection which associate to any type $\II$ cell the type $\I$ cell that is added by the same pushout. The formal definition look like this:

\begin{definition} \label{def:P_structure}

Let $f:A \rightarrow B$ be a cofibration of simplicial sets. A $P$-structure on $f$ is the data of:

\begin{itemize}

\item A (decidable) partition of the set of non-degenerate cells of $B$ which are not in $A$ into:

\[ B_{\I} \coprod B_{\II} \]

called respectively type $\I$ cells and type $\II$ cells.

\item A bijection $P:B_\II \overset{\sim}{\rightarrow} B_{\I}$.

\end{itemize}

Such that:

\begin{enumerate}

\item For all $x \in B_{\II}$, $dim( Px) =dim(x)+1$

\item For all $x \in B_{\II}$, there is a unique $i$ such that $d_i( P x) = x$.

\item Every cell of $B_{\II}$ has finite $P$-height (see definition \ref{Def_P_height} and lemma \ref{lem:Finite_weak_Pheight_is_enough} below).

\end{enumerate}

\end{definition}

In \cite{moss2015another}, the last condition was formulated as a well-foundness condition. Well-foundness is a tricky notion constructively so we prefer to avoid it. It should be clear to the reader that the condition we will now explain is equivalent to well-foundness if one assumes classical logic, or if one has a nice enough notion of well-foundness constructively. Intuitively this last condition just assert that the ``recipe'' given by the $P$-structure to construct $B$ from $A$ as an iterated pushout of horn inclusion is indeed well-founded, i.e. can be executed. We will formulate it by introducing for each cell $b \in B$ a set:

\[ Ant(b) \]

of ``antecedent of $b$'' which corresponds to the set of cells that needs to be constructed before $b$ in the process described by $P$. In \cite{moss2015another} the well-foundness condition is essentially that the order relation generated by $b' \in Ant(b)$ is well-founded. As each $Ant(b)$ is a finite set this is equivalent to the fact that for each $b$ there is an integer $k$ such that when iterating $Ant(b)$ more than $k$ times one has only cells in $A$. This is this second definition that we will use in our constructive context.

\bigskip

More precisely:

Given a cell $b \in B_{\II}$ and let $i$ be the unique integer such that $d_i Px = x$, one defines the set $Ant(b)$ of antecedent of $b$ as:

\[ Ant_0(b) = \{ d_j P(b) | j \neq i \} \]

And one defines $Ant(b)$ as $Ant_0(b)$ together with all (iterated) faces of cells appearing in $Ant_0(b)$.

Similarly, if $b = P b'$ is type $\I$, one defines:

\[ Ant(b) = Ant(b')\]

Finally, if $b \in A$:

 \[ Ant(b) = \emptyset \]

and if $b$ is not in $A$ but degenerated, then

\[ Ant(b) = Ant(b') \]

where $b'$ is the unique non-degenerate cell such that $b= \sigma^* b'$.

One also defines $Ant_{\II}(b)$ to be the set of non-degenerate type $\II$ cell in $Ant_0(b)$. Note that in all cases $Ant(b)$ and $Ant_0(b)$ are Kurawtowski-finite\footnote{A set $X$ is said to be Kuratowski-finite if $\exists n, \exists x_1,\dots,x_n \in X, \forall x \in X, x= x_1 \text{ or } \dots \text{ or } x= x_n$.} sets, and as the subset of type $\II$ cell is decidable, $Ant_{\II}(b)$ is also Kurawtowski-finite. One defines $Ant^k(b)$ and $Ant_{\II}^k(b)$ by:

\[ Ant^1(b) = Ant (b) \qquad  Ant^k(b) = \bigcup_{c \in Ant b } Ant^{k-1} c \]
\[ Ant_{\II}^1(b) = Ant_{\II} (b) \qquad  Ant_{\II}^k(b) = \bigcup_{c \in Ant_{\II} b } Ant_{\II}^{k-1} c \]

Note that when applied to a non-degenerate type $\II$ cell $b \in B$, all elements of $Ant_{\II}(b)$  (and hence of $Ant^k_{\II}(b)$ as well) are non-degenerate type $\II$ cells of the same dimension as $b$.

\begin{definition} \label{Def_P_height}

\begin{itemize}
\item[]
\item One says that $b$ has finite $P$-height if there exists an integer $k$ such that:

\[ Ant^k (b) = \emptyset \]

\item One says that $b$ has finite weak $P$-height if there is an integer $k$ such that:

\[ Ant_{\II}^k (b) = \emptyset \]

\end{itemize}

\end{definition}

Note that for each given $k$ and $b \in B$, as the sets $Ant^k(b)$ and $Ant_{\II}^k(b)$ are Kuratowski-finite it is decidable whether or not $Ant^k(b)$ and $Ant_{\II}^k(b)$ are empty. In particular, assuming $b$ has finite (weak) $P$-height there is smallest integer $k$, called the (weak) $P$-height of $b$, such that $Ant_{(\II)}^k(b) = \emptyset$. But in general it might not be decidable whether $b$ has finite (weak) $P$-height or not.

\begin{lemma} \label{lem:Finite_weak_Pheight_is_enough}

Let $f :A \hookrightarrow B$ with a $P$-structure satisfying all the conditions of definition \ref{def:P_structure} but the last.  Then the following are equivalent:

\begin{itemize}

\item Every $b \in B$ has finite $P$-height.

\item Every non-degenerate type $\II$ cell $b \in B_{\II}$ has finite weak $P$-height.

\end{itemize}
\end{lemma}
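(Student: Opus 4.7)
The $(\Rightarrow)$ direction is immediate: since $Ant_{\II}(b) \subseteq Ant_0(b) \subseteq Ant(b)$ for any non-degenerate type $\II$ cell $b$, a straightforward induction on $k$ gives $Ant_{\II}^k(b) \subseteq Ant^k(b)$, so $Ant^k(b)=\emptyset$ forces $Ant_{\II}^k(b)=\emptyset$.

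For the converse, I would first reduce to proving that every non-degenerate type $\II$ cell has finite $P$-height. Inspecting the definition of $Ant$: one has $Ant(b)=\emptyset$ when $b\in A$, $Ant(b)=Ant(b')$ when $b=Pb'$ is non-degenerate type $\I$ (with $b'$ non-degenerate type $\II$ of dimension $\dim(b)-1$), and $Ant(b)=Ant(b')$ when $b$ is degenerate outside $A$ (with $b'$ non-degenerate of strictly smaller dimension). Iterating these identities gives $Ant^k(b)=Ant^k(b')$ in both non-trivial cases, so a finite $P$-height bound on non-degenerate type $\II$ cells propagates to every cell.

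The main step is a double induction, outer on $n=\dim(b)$ and inner on the weak $P$-height $w(b)$. For a non-degenerate type $\II$ cell $b$ of dim $n$, I would assume (outer hypothesis) that every cell of $B$ of dim $<n$ has finite $P$-height, and (inner hypothesis) that every non-degenerate type $\II$ cell of dim $n$ with weak $P$-height strictly less than $w(b)$ has finite $P$-height. The crucial observation is that for any $c \in Ant_{\II}(b)$ one has $Ant_{\II}^{w(b)-1}(c) \subseteq Ant_{\II}^{w(b)}(b) = \emptyset$, hence $w(c)<w(b)$, so the inner hypothesis grants $c$ finite $P$-height. Every other element of $Ant(b)$ is either in $A$ (with empty antecedent), or a proper iterated face of an element of $Ant_0(b)$ (of dim $<n$, by the outer hypothesis), or a degenerate element of $Ant_0(b)$ whose $Ant$ equals that of a non-degenerate cell of dim $<n$ (outer hypothesis), or a non-degenerate type $\I$ element of $Ant_0(b)$ of the form $Pc'$ with $\dim c'=n-1$ (outer hypothesis). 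Since $Ant(b)$ is Kuratowski-finite and each of its elements has finite $P$-height, letting $M$ be the maximum of these heights gives $Ant^{M+1}(b) \subseteq \bigcup_{c \in Ant(b)} Ant^M(c) = \emptyset$. The inner base case $w(b)=1$, where $Ant_{\II}(b)=\emptyset$, runs identically with the contribution of $Ant_{\II}(b)$-elements vacuous.

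The main obstacle to constructivity is manipulating $w(b)$ as a concrete natural number to drive the inner induction: the hypothesis gives only existence of some $k$ with $Ant_{\II}^k(b)=\emptyset$, but because emptiness of the Kuratowski-finite set $Ant_{\II}^k(b)$ is decidable in $k$, a bounded search for the least such $k$ terminates, so $w(b)$ is available as an explicit natural number; taking maxima over Kuratowski-finite sets of naturals is similarly unproblematic, and the remainder is routine bookkeeping.
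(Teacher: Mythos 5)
Your proposal is correct and follows essentially the same route as the paper: the easy inclusion $Ant_{\II}^k(b)\subseteq Ant^k(b)$ for one direction, and for the converse a double induction (outer on dimension, inner on the weak $P$-height), handling degenerate, type $\I$, in-$A$ and lower-dimensional cells via the outer hypothesis, the type $\II$ antecedents via the strict drop in weak $P$-height, and concluding with a maximum over the Kuratowski-finite set $Ant(b)$. Your constructive remarks (decidability of emptiness giving the least $k$, maxima over Kuratowski-finite sets of naturals) match the paper's own justification, so no gap remains.
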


\begin{proof}

It is clear that $Ant^k_{\II}(b) \subset Ant^k(b)$ hence the first condition implies the second. Conversely, assume that every $b \in B$ has finite weak $P$-height. We will prove by double induction on both the dimension and the weak $P$-height that all cells of $B$ have finite $P$-height.

First we assume that all cell of dimension $<n$ have finite $P$-height. 
Cells of $A$ have $P$-height zero. All cells of $B$ of dimension $n$ that are either degenerate or of type $\I$ satisfies $Ant(b) = Ant(b')$ for some $b'$ of dimension strictly less than $n$, hence for $b'$ of finite $P$-height by the induction assumption. As $Ant^k(b) = Ant^k(b')$ this implies that $b$ has finite $P$-height as well.

It remains to show that all non-degenerate $n$-cell of type $\II$ in $B$ have finite $P$-height. We do that by induction on their weak $P$-height.

Indeed for a general type $\II$ cell $b$, $Ant(b)$ is constituted of:

\begin{itemize}

\item Degenerate or type $\I$ cell, that are already known to have finite $P$-height.

\item Faces of cell in $Ant_0(b)$ which are hence of dimension $<n$ and hence known to be of finite $P$-height.

\item Non-degenerate type $\II$ cells that are hence elements of $Ant_{\II}(b)$, but 

\[\emptyset = Ant_{\II}^{k}(b) = \bigcup_{c \in Ant_{\II} b } Ant_{\II}^{k-1} c  \]

hence all $c \in Ant_{\II} b$ have weak $P$-height at most $k-1$, and hence they all have finite $P$-height by induction. 

\end{itemize}

So all elements of $Ant(b)$ have finite $P$-height, let $m$ be the maximum of all these $P$-height, one has that:

\[Ant^{m+1}(b) = \bigcup_{c \in Ant(b)} Ant^m(b) = \emptyset \]

\end{proof}

\begin{lemma} \label{lem:PstructurImpAnodyne}

A cofibration with a $P$-structure is anodyne. More precisely it is a $\omega$-transfinite composition of pushout of coproduct of horn inclusions.

\end{lemma}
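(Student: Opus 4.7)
The plan is to filter $f : A \hookrightarrow B$ by the $P$-height function and to identify each step of the filtration as the pushout of a single coproduct of horn inclusions. For $k \geq 1$ let
\[ A_k := \{ b \in B : \text{$b$ has $P$-height at most $k$} \}. \]
By the observations surrounding definition \ref{Def_P_height}, $A_1 = A$ (cells in $A$ have $Ant = \emptyset$, and no non-degenerate cell of $B_\I \cup B_\II$ can have $Ant = \emptyset$), and lemma \ref{lem:Finite_weak_Pheight_is_enough} gives $B = \colim_k A_k$. The first task is to verify that each $A_k$ really is a simplicial subset. Closure under degeneracies is automatic since $Ant(\sigma^* b') = Ant(b')$. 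For closure under faces I would prove the stronger statement that for every non-degenerate $c \in B$ and every index $j$, the face $d_j c$ has $P$-height at most that of $c$, with strict inequality unless $c = Pb$ and $j = i(b)$: for $c = Pb \in B_\I$ and $j \neq i(b)$ one has $d_j c \in Ant_0(b) \subset Ant(c)$ by definition, while for $c \in B_\II$ the simplicial identities $d_j d_i = d_{i-1} d_j$ (when $j<i$) and $d_j d_i = d_i d_{j+1}$ (when $j \geq i$) rewrite $d_j c = d_j d_{i(c)} Pc$ as an iterated face of some $d_{j'} Pc \in Ant_0(c)$, hence $d_j c \in Ant(c)$.

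Next I would exhibit $A_k \hookrightarrow A_{k+1}$ as a pushout. The non-degenerate cells added are exactly those of $P$-height $k+1$, and they come in pairs $\{b, Pb\}$ with $b \in B_\II$, because $Ant(Pb) = Ant(b)$ forces $Pb$ and $b$ to share $P$-height. For each such pair, write $n = \dim b$ and $i = i(b)$; the horn $\Lambda^{i}[n+1]$ is generated by the faces $d_j Pb$ for $j \neq i$, all of which lie in $Ant_0(b) \subset Ant(b) \subseteq A_k$. The claim is that
\[\begin{tikzcd}
\coprod \Lambda^{i(b)}[\dim b +1] \ar[r] \ar[d,hook] & A_k \ar[d,hook] \\
\coprod \Delta[\dim b +1] \ar[r] & A_{k+1}
\end{tikzcd}\]
(coproduct indexed by pairs of $P$-height exactly $k+1$) is a pushout in $\widehat{\Delta}$. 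Commutativity is the computation just made; for the pushout property, the non-degenerate cells of $\Delta[n+1]$ lying outside $\Lambda^{i}[n+1]$ are exactly the top cell (going to $Pb$) and its $i$-th face (going to $b$), so the injectivity of $P$ together with the decidability of the partition $B_\I \coprod B_\II$ ensures that no two such contributions collide and that nothing outside $A_{k+1}$ is reached.

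Iterating for $k = 1, 2, \ldots$ then exhibits $A = A_1 \hookrightarrow A_2 \hookrightarrow \cdots$ as an $\omega$-transfinite composition of pushouts of coproducts of horn inclusions with colimit $B$, which is the desired conclusion. The main obstacle is the face-closure step for $A_k$, with the somewhat fiddly case analysis based on the simplicial identities; everything else is bookkeeping. Constructively the argument is fine because the predicate $Ant^k(b) = \emptyset$ is decidable for each fixed $k$ (the $Ant^k(b)$ are Kuratowski-finite), the partition $B_\I \coprod B_\II$ is decidable by hypothesis, and the pairing index $i(b)$ is part of the data of the $P$-structure.
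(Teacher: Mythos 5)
Your proof is correct and follows essentially the same route as the paper: filter $B$ by $P$-height, check each level is a simplicial subset via the $Ant$-closure of faces and degeneracies, and realize each successive inclusion as a pushout of the coproduct of horn inclusions $\Lambda^{i(b)}[\dim b+1]\hookrightarrow\Delta[\dim b+1]$ attached along $Pb$ for the type $\II$ cells $b$ of the given height, concluding by the finite-height condition that the $\omega$-composite is $B$. The only differences are cosmetic (an index shift in where the filtration starts and a more explicit write-up of the simplicial-identity case analysis and of the pushout verification, both of which the paper only sketches).
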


A map will be called ``strongly anodyne'' if it admits a $P$-structure.

\begin{proof}
Let $A \hookrightarrow B$ be a cofibration with a $P$-structure.

Let $B_k \subset B$ be the subset of $B$ of cell of $P$-height at most $k$. One has $B_0 =A$, and $B_k$ is a sub-simplicial set. Indeed, for every cell $b \in B$ all faces of $b$ appears in $Ant(b)$ or are such that $Ant(d_i b)=Ant(b)$ and all degeneracies of $b$ satisfies $Ant(\sigma^* b)= Ant(b)$, hence they all have $P$-height at most $k$.

Let $U$ be the set of non-degenerate type $\II$ cell of $B$ of $P$-height exactly $k$. For each $u \in U$, let $i_u$ be the unique integer such that $d_{i_u} P(u)= u$.

Then the corresponding map $\Delta[n] \overset{P u}{\rightarrow} B_k$ send $\Lambda^{i_u}[n]$ to $B_{k-1}$ and both $u$ and $Pu$ are in $B_k- B_{k-1}$.

Hence taking the pushout:

\[\begin{tikzcd}
\Lambda^{i_u} [n] \ar[r] \ar[d] & \Delta [n] \ar[d] \\
B_{k-1} \ar[r] & R
\end{tikzcd}\]

produces the simplicial set $R \subset B_k$ whose cells are all those of $B_{k-1}$, $u$ and $Pu$ and all their degeneracy. Taking the pushout by the coproduct of all these horn inclusions for all $u \in U$ gives $B_{k-1} \rightarrow B_{k}$.

Hence $B= \bigcup B_k$ is a $\omega$-transfinite composition of the maps $B_k \rightarrow B_{k+1}$ which are all pushout of coproduct horn inclusion.

\end{proof}

Classically one also has the converse: any transfinite composition of pushouts of coproduct horn inclusion has a canonical $P$-structure. Constructively this sort of statement is somehow problematic, mostly because the general notion of ``transfinite composition'' require a notion of ordinal to be formulated appropriately, but it works perfectly fine if one restrict to $\omega$-composition:

\begin{prop}
The class of strongly anodyne morphism contains all horn inclusion and is stable under pushout and $\omega$-transfinite\footnote{Here the restriction to ``$\omega$'' is only to avoid the discussion of what is an ordinal constructively.} composition. Any morphism can be factored as a strongly anodyne morphisms followed by a Kan fibration, and any anodyne morphism is a retract of a strongly anodyne morphism.

\end{prop}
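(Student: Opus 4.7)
The plan is to verify each claim in order, with the finite $P$-height condition being the subtle point at each step.

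For horn inclusions $\Lambda^k[n] \hookrightarrow \Delta[n]$, I would exhibit the obvious $P$-structure: the only non-degenerate cells of $\Delta[n]$ not in $\Lambda^k[n]$ are the top cell (call it $P$, type $\I$) and the $k$-th face $\partial^k[n]$ (type $\II$), with $P(\partial^k[n])$ set to be the top cell. All three conditions of Definition~\ref{def:P_structure} hold trivially: the dimensions differ by one, $d_k$ picks out the type $\II$ cell uniquely, and $Ant_{\II}$ is empty so the weak $P$-height is $1$, giving finite $P$-height by Lemma~\ref{lem:Finite_weak_Pheight_is_enough}. Stability under pushout is handled as follows. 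Given a pushout of $f:A\hookrightarrow B$ along $A\to C$ producing $C\hookrightarrow D$, the levelwise complementedness of $f$ makes the non-degenerate cells of $D$ not in $C$ biject with those of $B$ not in $A$; transport the partition, the bijection $P$, and observe that antecedents in $D$ are the images of antecedents in $B$, with cells of $A$ being replaced by cells of $C$ (which have empty $Ant$ in the new structure). Hence weak $P$-heights can only decrease, and condition (3) is preserved.

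For stability under $\omega$-transfinite composition $A=A_0\hookrightarrow A_1\hookrightarrow\cdots$, take the disjoint union of partitions and bijections $P_i$ for each stage. The only point requiring care is condition (3); I would use Lemma~\ref{lem:Finite_weak_Pheight_is_enough} and prove by induction on the stage index $i$ that every non-degenerate type~$\II$ cell appearing at stage $i$ has finite weak $P$-height in the global structure. A type $\II$ cell $b$ appearing at stage $i$ has $Ant_{\II}(b)$ equal to the set of faces of $P(b)$ (excluding the distinguished one) that are non-degenerate type $\II$, which by construction all lie in $A_i$; iterating, we only cross from stage $i$ down into $A_{i-1}$ at points where a face of $P(b)$ already lives in $A_{i-1}$. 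Splitting these into the ``stays at level $i$'' part (finite by the local $P$-structure on $A_{i-1}\hookrightarrow A_i$) and the ``drops to $A_{i-1}$'' part (finite by induction on $i$) gives a global bound.

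The factorization is the constructive small object argument applied to the set of horn inclusions: starting from any $f:X\to Y$, form $X=X_0\to X_1\to\cdots$ where $X_{k+1}$ is the pushout of $X_k$ along the coproduct of $\Lambda^i[n]\hookrightarrow\Delta[n]$ indexed by all lifting problems against $X_k\to Y$. Each $X_k\hookrightarrow X_{k+1}$ is a pushout of a coproduct of horn inclusions, hence strongly anodyne by the first two parts (coproducts being a trivial instance of pushout, or by direct inspection of $P$-structures); the colimit $X\hookrightarrow X_\infty$ is then strongly anodyne by $\omega$-transfinite stability, and the induced $X_\infty\to Y$ is a Kan fibration since $\Lambda^i[n]$ and $\Delta[n]$ are finitely presentable so every lifting problem at the colimit factors through some finite stage where a lift was explicitly attached.

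Finally, given an anodyne morphism $f$, apply the factorization just constructed to write $f=pj$ with $j$ strongly anodyne and $p$ a Kan fibration. Since $f$ is anodyne it has the left lifting property against $p$, and Lemma~\ref{lem:retract} exhibits $f$ as a retract of $j$. The main obstacle throughout is the $\omega$-transfinite composition step: without classical well-foundedness one must work with the explicit iterated-antecedent formulation of Definition~\ref{Def_P_height}, and the two-variable induction (on stage and on local weak $P$-height) must be executed carefully so as not to implicitly appeal to dependent choice.
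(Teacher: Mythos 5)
Your proposal is correct and follows essentially the same route as the paper: the trivial $P$-structure on horn inclusions, transport of $P$-structures along pushouts, coproducts and $\omega$-compositions, and then the small object argument together with the retract lemma \ref{lem:retract}. The only difference is one of detail: where the paper simply asserts that the transported structures satisfy the finite $P$-height condition, you spell out the double induction (stage index and local weak $P$-height), which is a correct elaboration rather than a different argument.
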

\begin{proof} Horn inclusion have a trivial $P$-structure with one cell of type $\I$ and one cell of type $\II$. It is easy to see that coproduct, pushout and transfinite composition of strongly anodyne map have $P$-structure induced by the $P$-structure we start from, for example if $A \hookrightarrow B$ has a $P$-structure, then $C \rightarrow B \coprod_A C$ has a $P$-structure where a cell in $B \coprod_A C$ is type $\I$ or $\II$ if and only if it is type $\I$ or $\II$ for the $P$-structure on $A \hookrightarrow B$ and the map $P$ is the same as the one on $B$, and similarly for coproduct and transfinite composition.

It follows that the factorization of the map as an anodyne followed by a Kan fibration obtained by the small object argument is a strongly anodyne morphism as it is constructed as a $\omega$-transfinite composition of pushout of coproduct of horn inclusion. Finally any anodyne morphism $j$ can be factored as a strongly anodyne morphism followed by a Kan fibration, and the usual retract lemma (\ref{lem:retract}) shows that $j$ is a retract of the strongly anodyne part of the factorization.

 \end{proof}

We finish this section by mentioning a very important example where this machinery applies, mostly to serve as an example to show how it can be used.

Given two morphisms $f:A\rightarrow B$ and $g:X \rightarrow Y$ between simplicial sets one define as usual $f \corner{\times} g$ the ``corner-product'' of $f$ and $g$ as the morphism:

\[f \corner{\times} g :  \left( A \times Y \right) \coprod_{A \times X} \left( B \times X \right) \rightarrow B \times Y\]

One then has the following well known proposition, which we have referred to in the introduction as the corner-product conditions, and which is a key point in establishing the existence of the weak model structure on simplicial sets. It also corresponds to the fact the model structure on simplicial sets that we are constructing is cartesian.

\begin{prop} \label{prop:pushoutProdCond}
If $i$ and $j$ are cofibrations, then $i \corner{\times} j$ is a cofibration as well. Is one of them is anodyne then $i \corner{\times} j$ is also anodyne.
\end{prop}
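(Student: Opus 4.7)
The plan is to reduce the statement to its generators via the standard Joyal--Tierney calculus. Since $\corner{\times}$ preserves colimits in each variable separately, and since both cofibrations and anodyne morphisms are stable under coproducts, pushouts, $\omega$-transfinite compositions and retracts, it suffices to verify the proposition when $i$ and $j$ range over generators. Concretely, I must show: (a)~$\partial_n \corner{\times} \partial_m$ is a cofibration into $\Delta[n]\times\Delta[m]$; and (b)~$\partial_n \corner{\times} (\Lambda^k[m]\hookrightarrow\Delta[m])$ is anodyne. Symmetry of the cartesian product takes care of the other way of reading ``one of them is anodyne''.

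For (a), I would use the explicit description of non-degenerate cells of $\Delta[n]\times\Delta[m]$ as monotone sequences $((a_0,b_0),(a_1,b_1),\ldots,(a_p,b_p))$ in the product poset $[n]\times[m]$ with no two consecutive pairs equal. Such a sequence lies outside the source of the corner map exactly when both projections are surjective (onto $[n]$ and onto $[m]$ respectively), a decidable condition because equality in finite ordinals is decidable; and degeneracy in $\Delta[n]\times\Delta[m]$ is decided by testing whether two consecutive pairs coincide, again decidable. This produces both the levelwise complementation and the decidability of degeneracy on the complement that a cofibration requires.

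For (b), the heart of the argument, I would construct a $P$-structure on the inclusion in the sense of Definition~\ref{def:P_structure} and then invoke Lemma~\ref{lem:PstructurImpAnodyne}. The non-degenerate cells of $\Delta[n]\times\Delta[m]$ missing from the source are those staircases whose first projection is surjective onto $[n]$ and whose second projection is surjective onto $[m]\setminus\{k\}$. I would partition them into types $\I$ and $\II$ and define the pairing $P$ using a canonical ``pivot'' read off combinatorially from the chain, namely the first index where the chain crosses the horizontal line $y=k$ and testing whether a particular lozenge step is present there: the two chains related by insertion or removal of that step form an $(\I,\II)$ pair, and by construction there is a unique face index $i$ with $d_i P\alpha = \alpha$. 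All the decidability conditions required by Definition~\ref{def:P_structure} (decidability of the partition, of the involution, of the face index) follow from decidability of equality in finite ordinals.

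The principal obstacle is the third clause of Definition~\ref{def:P_structure}: finiteness of weak $P$-height. Classically Moss finishes by appealing to a well-founded order on staircases, which is not directly constructive. Instead I would use Lemma~\ref{lem:Finite_weak_Pheight_is_enough} and exhibit, for each type~$\II$ chain $b$, an explicit natural-number complexity—such as a lexicographic rank on $[n]\times[m]$ computed from the position of the pivot and the entries lying before it—which strictly decreases along the relation $c\in Ant_{\II}(b)$. A direct induction on this bound then yields an integer $k_b$ with $Ant_{\II}^{k_b}(b)=\emptyset$, as required. Once this is in place, Lemma~\ref{lem:PstructurImpAnodyne} shows that the corner map is strongly anodyne, which finishes (b) and hence the proposition.
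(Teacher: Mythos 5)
Your proposal follows essentially the same route as the paper: reduce to the generating maps by the Joyal--Tierney corner calculus, check the cofibration case by the explicit decidable description of non-degenerate cells of $\Delta[n]\times\Delta[m]$, and prove the anodyne case by equipping $\partial_n \corner{\times}(\Lambda^k[m]\hookrightarrow\Delta[m])$ with a Moss-style $P$-structure and invoking Lemma~\ref{lem:PstructurImpAnodyne}. The only difference is that you spell out the finite weak $P$-height verification (via Lemma~\ref{lem:Finite_weak_Pheight_is_enough} and an explicit decreasing rank), which the paper leaves as an easy exercise after describing the type $\I$/$\II$ pairing.
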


As usual (following for example the appendix of \cite{joyal2006quasi}) this implies the dual condition, that if $i:A \rightarrow B$ is a cofibration and $p:Y \rightarrow X$ is a fibration, then the map $[B,Y] \rightarrow [B,X] \times_{[A,X]} [A,Y]$ is a fibration (the brackets denotes the cartesian exponential in simplicial sets), and it is a trivial fibration as soon as either $i$ is anodyne or $p$ is a trivial fibration.

\begin{proof}
By usual abstract manipulation (see for example the appendix of \cite{joyal2006quasi}) it is sufficient to show it when $i$ and $j$ are generating cofibrations/generating anodyne map. If $i$ and $j$ are generating cofibrations it is very easy to check that $i \corner{\times} j$ is a cofibration as defined in the statement of our main theorem \ref{main_theorem}. It remains to check that if $i$ is one of the generating cofibrations, i.e. $\partial \Delta[n] \hookrightarrow \Delta[n]$ for some $n$, and $j$ is one of the generating anodyne morphisms, i.e. $\Lambda^k[m] \hookrightarrow \Delta[m]$ for some $k,m$, then $i \corner{\times} j$ is anodyne. This is done by constructing an explicit $P$-structure on $i \corner{\times} j$.

The first direct proof of this claim that we know of is in \cite{joyal2008notes} (theorem 3.2.2), here we follow the proof of S.Moss' in 2.12 of \cite{moss2015another} to show how P-structures works. We only treat the case $k < m $ for simplicity, by reversing the order relation on can treat the case $k>0$ similarly, which in particular cover the case $k=m$.

A $i$-cell of $\Delta[n] \times \Delta[m]$ is an order preserving function $[i] \rightarrow [n] \times [m]$. It is non-degenerate if and only if it is an injective function. The domain $D$ of $i \corner{\times} j$ is:

\[ \left( \Delta[n] \times \Lambda^k[m] \right) \coprod_{\partial \Delta[n] \times \Lambda^k[m]} \left( \partial \Delta[n] \times \Delta[m] \right) = \left( \Delta[n] \times \Lambda^k[m] \right) \bigcup \left( \partial \Delta[n] \times \Delta[m] \right) \]

It corresponds to the morphisms  $[i] \rightarrow [n] \times [m]$ such that either they skip a column or they skip a row other than $k$, where we consider that $[n] = \{0,\dots,n\}$ numbers the column of $[n] \times [m]$ and $[m]=\{0,\dots,k,\dots,m\}$ numbers the row. So the only non-degenerate cell of $\Delta[n] \times \Delta[m]$ that are not in $D$ are injection $[i] \rightarrow [n] \times [m]$ whose first projection takes all possible value, and whose second projection takes all possible values except maybe $k$.

One says that a cell is type $\II$ if either it skip the $k^{th}$ row by going directly from $(a,k-1)$ to $(a+1,k+1)$, in which case one define $Px$ by adding the intermediate step $(a,k-1),(a+1,k),(a+1,k+1)$ , or if the last point where the $k^{th}$ row is reached, is $(a,k)$ followed by $(a+1,k+1)$ in which case $Px$ is defined by inserting the intermediate step: $(a,k), (a,k+1), (a+1,k+1)$.

It is an easy exercise to check that this defines a $P$-structure.

\end{proof}

\subsection{Kan $\Ex$ and $\Sd$ functors} \label{subsec:KanExfunctorDef}

Consider the barycentric subdivision functor $\Delta \rightarrow \widehat{\Delta}$:

\[ \Delta[n] \mapsto  \Sd \Delta[n] := N \Kcal([n]) \]

Where $\Kcal([n])$ denotes the set of \emph{finite non-empty decidable} subsets of $[n]$. Functoriality in $[n]$ is given by direct image of subsets on $\Kcal[n])$.

This extend to an adjunction:

\[ \Sd : \widehat{\Delta} \leftrightarrows \widehat{\Delta} : \Ex  \]

with: 

\[ (\Ex X)_n = \Hom(\Sd \Delta [n], X) \qquad \Sd X = \colim_{\Delta [n] \rightarrow X} \Sd \Delta [n] \]

The barycentric subdivision construction has a nice expression not just for the $\Delta[n]$, but also for all objects which are in the image of the functor $\widehat{\Delta_+} \rightarrow \widehat{\Delta}$, indeed:

\begin{prop} \label{Prop:semi-simplicialSubdivision} The composite:

\[ \widehat{\Delta_+} \rightarrow \widehat{\Delta} \overset{\Sd}{\rightarrow} \widehat{\Delta} \]

Is the functor sending a semi-simplicial set $X$ to $N(\Delta_{+}/X)$.

\end{prop}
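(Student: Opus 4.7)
The plan is to compute both sides at the level of simplices and exhibit a natural bijection. First, the functor $i: \widehat{\Delta_+} \to \widehat{\Delta}$ (the left Kan extension along $\Delta_+ \hookrightarrow \Delta$) is a left adjoint to the restriction functor and sends $\Delta_+[n]$ to $\Delta[n]$; since $\Sd$ is also a left adjoint, the density of representables gives
\[ \Sd(iX) \;=\; \colim_{([n],x) \in \Delta_+/X} \Sd\Delta[n] \;=\; \colim_{\Delta_+/X} N\Kcal([n]). \]
Moreover, the poset $\Kcal([n])$ is canonically isomorphic to the slice $\Delta_+/\Delta_+[n]$: a finite non-empty decidable subset $S \subseteq [n]$ corresponds to the unique injection $[\#S-1] \hookrightarrow [n]$ with image $S$.

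I would then construct a level-wise bijection $\Sd(iX)_k \cong N(\Delta_+/X)_k$. A representative of a $k$-simplex in the colimit is a pair $(([n], x),\, S_0 \subseteq \cdots \subseteq S_k)$ with $x \in X_n$ and the $S_i$ in $\Kcal([n])$, modulo the equivalence generated by pushforward along morphisms $f: ([n], x) \to ([n'], x')$ in $\Delta_+/X$ (acting as $S_i \mapsto f(S_i)$); a $k$-simplex of $N(\Delta_+/X)$ is a chain $([n_0], x_0) \to \cdots \to ([n_k], x_k)$. The comparison map sends a representative $(([n],x), (S_i))$ to the chain $([\#S_i - 1], x|_{S_i})_i$, where $x|_{S_i}$ is the pullback of $x$ along $S_i \hookrightarrow [n]$ and the connecting injections come from $S_i \subseteq S_{i+1}$; this is invariant under elementary equivalences because $x|_S = x'|_{f(S)}$ whenever $f^*x' = x$, upon identifying $S$ with $f(S)$. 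For the inverse, a chain in $\Delta_+/X$ produces the \emph{canonical representative} $(([n_k], x_k),\, S_0 \subseteq \cdots \subseteq S_k = [n_k])$, where $S_i$ is the image of $[n_i]$ in $[n_k]$. These constructions are visibly mutually inverse on canonical representatives; to conclude, any representative is equivalent to one in canonical form, obtained by reducing $[n]$ to the ordinal underlying $S_k$ via the inclusion $S_k \hookrightarrow [n]$, and this canonical form is exactly what the comparison map produces.

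Face and degeneracy operators on both sides correspond to omitting or repeating entries in the chain of subsets (respectively in the chain in $\Delta_+/X$), so the bijection is simplicial; naturality in $X$ is immediate from the construction. The main technical point is verifying invariance of the comparison map under all elementary equivalences, which reduces to the functoriality identity $x|_S = (f^*x')|_S = x'|_{f(S)}$; once this is established, the rest is routine bookkeeping, and in particular there is no need to analyze arbitrary zigzags in the equivalence relation since the comparison map itself serves as an invariant.
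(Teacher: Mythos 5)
Your proof is correct, but it takes a more hands-on route than the paper. The paper's argument is purely formal: it observes that $X \mapsto N(\Delta_+/X)$ itself preserves colimits, via the rewriting $N(\Delta_+/X)_k = \coprod_{F:[k]\to\Delta_+} X(F(k))$, so that both sides are cocontinuous functors on $\widehat{\Delta_+}$ and it suffices to compare them on representables, where the identification $\Delta_+/\Delta_+[n] \simeq \Kcal[n]$ finishes the proof in one line. You instead keep only the cocontinuity of $\Sd \circ i$ (which is automatic, both being left adjoints), write $\Sd(iX)$ as $\colim_{\Delta_+/X} N\Kcal([n])$, and compute that colimit explicitly by exhibiting canonical representatives: every class contains a unique representative supported on the ordinal underlying the top subset $S_k$, and these normal forms match bijectively and simplicially with chains in $\Delta_+/X$. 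Your well-definedness check (invariance under the generating relations via $x|_{S} = x'|_{f(S)}$) and the observation that the comparison map itself disposes of zigzag issues are exactly the points that need care, and they hold. What each approach buys: the paper's is shorter and avoids all bookkeeping, at the price of the (easy but not completely obvious) coproduct formula showing $N(\Delta_+/-)$ is cocontinuous; yours avoids that lemma and, as a bonus, produces an explicit description of the isomorphism and a normal form for simplices of $\Sd(iX)$, which can be convenient when one later wants to verify decidability or compute with $\Sd$ on images of semi-simplicial sets.
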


One can note that as the category $\Delta_+ /X$ is directed, the nerve $N(\Delta_{+}/X)$ is itself the image of the semi-simplicial set of its non-degenerate cells. We won't make any use of this remark though.

\begin{proof}
This functors $X \mapsto N(\Delta_{+}/X)$ preserves colimit, because it can be rewritten as:

\[ N(\Delta_+/X)_k = \coprod_{F: [k] \rightarrow \Delta_+} X(F(k)) \] 

which is levelwise a coproduct of colimits preserving functors.

Hence we are comparing to colimits preserving functor, so it is enough to show they are isomorphic when restricted to representable. But $\Delta_+/[n] \simeq \Kcal [n]$ functorially on map of $\Delta_+$ so this concludes the proof. 

\end{proof}

\begin{prop} \label{prop:Sd_Ex_preserves_cof_fib}$\Sd$ preserves cofibrations and anodyne morphisms, $\Ex$ preserves fibrations and trivial fibrations. \end{prop}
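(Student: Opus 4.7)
The plan is to reduce the statement about $\Sd$ to its behaviour on generators, and then transfer to $\Ex$ by adjunction. Since $\Sd$ is a left adjoint it commutes with colimits and preserves retracts; the class of cofibrations (resp.\ anodyne morphisms) is the closure of the set of boundary inclusions (resp.\ horn inclusions) under retracts, pushouts, coproducts and transfinite compositions, so it is enough to check that $\Sd$ sends each $\partial\Delta[n] \hookrightarrow \Delta[n]$ to a cofibration and each $\Lambda^k[n] \hookrightarrow \Delta[n]$ to an anodyne morphism. The adjunction $\Sd \dashv \Ex$ then yields the two statements about $\Ex$: a right lifting problem for $\Ex f$ against a generator corresponds under the adjunction to a right lifting problem for $f$ against the $\Sd$-image of that generator, so $\Ex f$ will have the RLP against all boundary inclusions (resp.\ horn inclusions) whenever $f$ is a trivial fibration (resp.\ Kan fibration).

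For the boundary case, Proposition~\ref{Prop:semi-simplicialSubdivision} identifies $\Sd\partial\Delta[n] \hookrightarrow \Sd\Delta[n]$ with the inclusion of nerves induced by $\Kcal([n])\setminus\{[n]\} \hookrightarrow \Kcal([n])$. Equality of decidable subsets of $[n]$ is decidable, so this is a level-wise complemented monomorphism; its non-degenerate cells are precisely the strictly increasing chains and it is decidable whether a given chain is strictly increasing, so the chain being degenerate or not is decidable. Hence the inclusion is a cofibration in the sense of Theorem~\ref{main_theorem}.

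For the horn case, the plan is to equip $\Sd\Lambda^k[n] \hookrightarrow \Sd\Delta[n]$ with an explicit $P$-structure in the sense of Definition~\ref{def:P_structure} and conclude by Lemma~\ref{lem:PstructurImpAnodyne}. The non-degenerate cells to be added are the strictly increasing chains of non-empty decidable subsets of $[n]$ in which $[n]$ or $[n]\setminus\{k\}$ appears. The pairing $P$ associates to a type~$\II$ chain a type~$\I$ chain containing the subchain $[n]\setminus\{k\}\subsetneq [n]$: roughly, when $[n]\setminus\{k\}\in C$ but $[n]\notin C$ one appends $[n]$ at the top; when $[n]\in C$ with predecessor $S$ satisfying $k\notin S$ one inserts $[n]\setminus\{k\}$ between $S$ and $[n]$. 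The remaining case, where the predecessor of $[n]$ in $C$ contains $k$, requires a further case split on $C$ and a correspondingly adjusted $P$-map, which is essentially the combinatorial bookkeeping carried out in \cite[2.12]{moss2015another}.

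The main obstacle will be showing constructively that every type~$\II$ cell has finite weak $P$-height (Definition~\ref{Def_P_height}), rather than only the classical well-foundedness used in \cite{moss2015another}. I expect this to follow by induction on a computable invariant of the chain (e.g.\ the position and behaviour of $[n]\setminus\{k\}$ and $[n]$ within it), yielding an explicit integer bound on the weak $P$-height; Lemma~\ref{lem:Finite_weak_Pheight_is_enough} then upgrades this to finite $P$-height, and Lemma~\ref{lem:PstructurImpAnodyne} finishes by producing the required presentation of $\Sd\Lambda^k[n] \hookrightarrow \Sd\Delta[n]$ as a transfinite composite of pushouts of horn inclusions.
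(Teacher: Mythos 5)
Your overall route is the same as the paper's: reduce to the generating maps (using that $\Sd$ is cocontinuous and that cofibrations, resp.\ anodyne maps, are the saturation of the boundary, resp.\ horn, inclusions), transfer the statements about $\Ex$ through the adjunction, compute $\Sd$ on the generators via Proposition~\ref{Prop:semi-simplicialSubdivision}, settle the boundary case by decidability, and settle the horn case by exhibiting a $P$-structure. This is exactly what the paper does; for the horn case it simply cites Moss's explicit $P$-structure, which is Proposition~2.14 of \cite{moss2015another} (not 2.12 --- 2.12 is the pushout-product statement reproduced in \ref{prop:pushoutProdCond}).

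The one genuine problem is that the part of the pairing you make explicit does not satisfy Definition~\ref{def:P_structure}. Write $T=[n]$ and $T_k=[n]\setminus\{k\}$, take $n\geqslant 2$ and any nonempty $S\subsetneq T_k$. The chain $C_0=(S\subsetneq T_k)$ contains $T_k$ but not $T$, so your first rule makes it type $\II$ with $P(C_0)=(S\subsetneq T_k\subsetneq T)$; the chain $C_1=(S\subsetneq T)$ contains $T$ with predecessor $S$ and $k\notin S$, so your second rule makes it type $\II$ with $P(C_1)=(S\subsetneq T_k\subsetneq T)=P(C_0)$. Thus $P$ is not injective, and the type $\I$ cell $(S\subsetneq T_k\subsetneq T)$ has two distinct type $\II$ faces claiming it, so the bijectivity and the unique-face condition of Definition~\ref{def:P_structure} both fail. (Note also that when both $T_k$ and $T$ lie in $C$, the predecessor of $T$ is $T_k$, which does not contain $k$, so your second rule as literally stated even fires on cells that ought to be type $\I$.) The case analysis therefore has to be organized differently from the start --- this is precisely the content of Moss's Proposition~2.14, and deferring to it is legitimate (the paper does exactly that), but your two rules should then be replaced by his rather than offered alongside the deferral. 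By contrast, your worry about the finite weak $P$-height condition is not a serious obstacle here: $\Sd\Delta[n]$ is a finite simplicial set with decidable degeneracies, so once a correct pairing with the unique-face property is given, the antecedent relation lives on a finite decidable set and an explicit height bound (e.g.\ by the number of non-degenerate cells, or by a rank such as the dimension of the largest subset where the chain meets $\{T_k,T\}$) is available, after which Lemmas~\ref{lem:Finite_weak_Pheight_is_enough} and~\ref{lem:PstructurImpAnodyne} apply as you say. The reduction to generators, the decidability argument for the boundary inclusions, and the adjunction step for $\Ex$ are all correct and agree with the paper.
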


\begin{proof} It is enough to check that the image of the generating cofibrations and generating anodyne maps by $\Sd$ are cofibrations and anodyne respectively.

In both case one can use proposition \ref{Prop:semi-simplicialSubdivision} to computes $\Sd$ on the generators as they are image of semi-simplicial maps. This makes the results immediate for cofibrations:

\[ \Sd \partial \Delta[n] \rightarrow \Sd \Delta[n] \]

is the morphism $N( \Kcal[n] - \{ [n] \}) \rightarrow N( \Kcal[n])$ which is clearly a levelwise complemented monomorphisms between finite decidable, hence cofibrant, simplicial sets.

For anodyne:
\[ \Sd \Lambda^i[n] \rightarrow \Sd \Delta[n]\]

is the morphisms $N(\Kcal[n] - \{[n], [n]-\{i\} \}) \rightarrow N (\Kcal[n])$. It can then be checked completely explicitly that this is a (strongly) anodyne morphisms, see Proposition 2.14 of \cite{moss2015another} for an explicit description of a $P$-structure.

\end{proof}

There is a natural transformation:

\[ \Sd \Delta[n] \rightarrow \Delta [n] \]

Which is induced by the order preserving function:

\[ \max: \Kcal [n] \rightarrow [n] \]

sending each (decidable) subset of $[n]$ to its maximal element. By Kan extension, this gives us natural transformations:

\[ \Sd \overset{m}{\rightarrow} Id \qquad Id \overset{n}{\rightarrow} \Ex \]

One can hence define a sequences of functors:

\[\begin{tikzcd}[ampersand replacement=\&]
X \ar[r, "n_x"] \& \Ex X \ar[r,"n_{\Ex X}"] \&  \Ex^2 X \ar[r,"n_{\Ex^2 X}"] \& \dots \ar[r,"n_{\Ex^{k-1} X}"] \& \Ex^{k} X \ar[r,"n_{\Ex^k X}"] \& \dots \ar[r] \& \Ex^{\infty} X \\
\end{tikzcd}\]

with $\Ex^{\infty}$ the colimit.

\begin{lemma} \label{lemma:Ex_fibrant1} For each $k,n$, there is a (dotted) arrow $\Psi^k_n$ making the following triangle commutes. 

\[\begin{tikzcd}[ampersand replacement=\&]
\Sd^2 \Lambda^k [n] \arrow{dr}  \arrow{rr}{\Sd( m_{\Lambda^k [n]}) } \& \& \Sd \Lambda^k [n]\\
\& \Sd^2 \Delta[n] \arrow[dotted]{ur}{\Psi^k_n} \& 
\end{tikzcd}\]

\end{lemma}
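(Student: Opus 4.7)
My approach is to exhibit $\Psi^k_n$ as the nerve of an explicit order-preserving map between posets. Write $F := [n]\setminus\{k\}$ and $P := \Kcal([n])\setminus\{[n], F\}$, and for a poset $Q$ let $\Bcal(Q)$ denote the poset of finite non-empty decidable chains in $Q$ ordered by inclusion. By Proposition~\ref{Prop:semi-simplicialSubdivision} (applicable since each of the simplicial sets in question is cofibrant and so freely generated by its non-degenerate simplices), we identify $\Sd\Lambda^k[n]\cong N(P)$, $\Sd^2\Delta[n]\cong N(\Bcal(\Kcal([n])))$ and $\Sd^2\Lambda^k[n]\cong N(\Bcal(P))$. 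A simplicial map between nerves of posets is the same as an order-preserving map between them; under these identifications $\Sd(m_{\Lambda^k[n]})$ is the nerve of the \emph{max-image} map $\Bcal(P)\to P$ sending $\mathcal{A}$ to $\{\max K\mid K\in\mathcal{A}\}$, and it suffices to extend this to an order-preserving map $\phi\colon \Bcal(\Kcal([n]))\to P$ along the inclusion $\Bcal(P)\hookrightarrow \Bcal(\Kcal([n]))$.

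I define
\[
\phi(\mathcal{A}) \;=\; \bigl\{\max K \bigm| K\in\mathcal{A},\ K\neq [n],\ K\neq F\bigr\}\;\cup\; C(\mathcal{A}),
\]
where $C(\mathcal{A}) := \{k\}$ if $\mathcal{A}\cap\{[n], F\}\neq\emptyset$ and $C(\mathcal{A}) := \emptyset$ otherwise. All predicates involved are decidable on the finite decidable data in question, so the formula is constructively meaningful; both summands are monotone in $\mathcal{A}$, giving order-preservation; and when $\mathcal{A}\subseteq P$ the correction $C(\mathcal{A})$ vanishes, so $\phi$ extends the max-image map. Setting $\Psi^k_n := N(\phi)$ therefore gives the required lift, provided $\phi(\mathcal{A})$ really lies in $P$.

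The nontrivial verification is this last point. Non-emptiness is immediate, and $\phi(\mathcal{A})\neq F$ holds because whenever $\mathcal{A}$ meets $\{[n], F\}$ we include $k\in\phi(\mathcal{A})$ (and $k\notin F$), while otherwise we reduce to the special case $\mathcal{A}\subseteq P$, where $\phi(\mathcal{A})$ is the max-image of a chain in $P$ and one checks directly that it still lies in $P$. Ruling out $\phi(\mathcal{A}) = [n]$ is the main obstacle: since $[n] = F\cup\{k\}$, it reduces to the combinatorial claim that \emph{the max-image of any chain in $P$ cannot contain $F$ as a subset}. I prove this by enumerating $F$ as $l_0<l_1<\cdots<l_{n-1}$ and selecting, from any chain in $P$ whose max-image contains $F$, a subchain $K_{(l_0)}\subsetneq K_{(l_1)}\subsetneq\cdots\subsetneq K_{(l_{n-1})}$ with $\max K_{(l_j)} = l_j$. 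An induction on $j$ shows that $K_{(l_j)}$ is squeezed between $\{l_0,\ldots,l_j\}$ and $\{0,\ldots,l_j\}$; applied at $j = n-1$, this forces $K_{(l_{n-1})}\in\{F, [n]\}$, contradicting $K_{(l_{n-1})}\in P$.
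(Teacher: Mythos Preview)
Your argument is correct. The identifications $\Sd\Lambda^k[n]\cong N(P)$, $\Sd^2\Delta[n]\cong N(\Bcal(\Kcal[n]))$, $\Sd^2\Lambda^k[n]\cong N(\Bcal(P))$ are valid, and since any simplicial map between nerves of posets is the nerve of an order-preserving map, your description of $\Sd(m_{\Lambda^k[n]})$ as the max-image map is justified. The map $\phi$ you write down is visibly monotone, restricts correctly on $\Bcal(P)$, and your case analysis showing $\phi(\mathcal{A})\in P$ goes through; in particular the squeeze argument forcing $K_{(l_{n-1})}\in\{F,[n]\}$ is clean and constructive.

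The paper itself does not give a proof of this lemma: it simply records that Cisinski's argument (Proposition~2.1.39 of \cite{cisinski2006prefaisceaux}) is combinatorial and constructive. Your write-up therefore supplies what the paper only cites. One remark that would streamline your final paragraph: the combinatorial claim that the max-image of a chain in $P$ cannot contain $F$ is equivalent to saying that this max-image lies in $P$ (since any subset of $[n]$ containing $F$ is either $F$ or $[n]$), and \emph{that} is immediate from the mere existence of the natural transformation $m$: naturality forces $m_{\Lambda^k[n]}$ to land in $\Lambda^k[n]$, so the underlying non-degenerate simplex of $m(K_0\subsetneq\cdots\subsetneq K_l)$ is an element of $P$. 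Invoking this would let you skip the explicit induction on $j$, though of course the direct argument you give is perfectly valid and keeps the proof self-contained.
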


\begin{proof} The proof given in \cite{cisinski2006prefaisceaux} as proposition 2.1.39 is purely combinatorial and constructive.



\end{proof}

\begin{cor} \label{cor:Ex_infty_fibrant}For every cofibrant simplicial set $X$, $\Ex^{\infty} X$ is a Kan complex. \end{cor}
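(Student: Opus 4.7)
My plan is the following. Let $\phi : \Lambda^k[n] \to \Ex^{\infty} X$ be given; I need to extend it to $\Delta[n] \to \Ex^{\infty} X$.

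First I would argue that $\phi$ factors through some finite stage $\Ex^i X$ of the $\omega$-sequence. Since $\Lambda^k[n]$ has only finitely many non-degenerate cells, with finitely many boundary-compatibility constraints between them, and since $\Ex^{\infty} X = \colim_j \Ex^j X$ is an $\omega$-filtered colimit computed levelwise in the presheaf category, one can find $i$ large enough so that all the cells and constraints defining $\phi$ are already present at stage $i$. This yields $\phi_i : \Lambda^k[n] \to \Ex^i X$ whose composite with the canonical map $j_i : \Ex^i X \to \Ex^{\infty} X$ is $\phi$. The assumption that $X$ is cofibrant is used here to make this compactness argument constructively clean.

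Next I would construct a candidate filler at stage $i+1$. By $i$-fold application of the $\Sd \dashv \Ex$ adjunction, $\phi_i$ transposes to $\tilde{\phi}_i : \Sd^i \Lambda^k[n] \to X$. Applying $\Sd^{i-1}$ to the map $\Psi^k_n$ of Lemma \ref{lemma:Ex_fibrant1} produces a map $\Sd^{i+1} \Delta[n] \to \Sd^i \Lambda^k[n]$, and its composite with $\tilde{\phi}_i$ is a map $\psi : \Sd^{i+1} \Delta[n] \to X$ whose $(i+1)$-fold adjoint is $\hat{\psi} : \Delta[n] \to \Ex^{i+1} X$. My candidate filler is then $j_{i+1} \circ \hat{\psi} : \Delta[n] \to \Ex^{\infty} X$.

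To finish, I would verify that this extends $\phi$ along the horn inclusion. Under the adjunction, restricting $\hat{\psi}$ along $\Lambda^k[n] \hookrightarrow \Delta[n]$ corresponds to precomposing $\psi$ with $\Sd^{i+1}(\Lambda^k[n] \hookrightarrow \Delta[n])$. Applying $\Sd^{i-1}$ to the commutative triangle of Lemma \ref{lemma:Ex_fibrant1} reduces this precomposition to $\tilde{\phi}_i \circ \Sd^i(m_{\Lambda^k[n]})$; by naturality of $m$ and re-transposition, this is precisely the $(i+1)$-fold adjoint of $n_{\Ex^i X} \circ \phi_i$. Hence $\hat{\psi}|_{\Lambda^k[n]} = n_{\Ex^i X} \circ \phi_i$, so $j_{i+1} \circ \hat{\psi}|_{\Lambda^k[n]} = j_i \circ \phi_i = \phi$. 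The main obstacle is the very first step: classically the compactness factorization is immediate, but constructively one must verify that the finite data determining $\phi$ genuinely assembles into a map at a common stage, and this is precisely where cofibrancy of $X$ does its work.
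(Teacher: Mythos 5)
Your second and third steps are correct and coincide with the paper's own argument: they are Lemma \ref{lemma:Ex_fibrant1} transported along the adjunction $\Sd \dashv \Ex$, and your verification that $\hat{\psi}$ restricts on the horn to $n_{\Ex^i X}\circ \phi_i$ (so that composing into the colimit recovers $\phi$) is exactly the commutativity the paper asserts. (A minor point: $\Sd^{i-1}\Psi^k_n$ presupposes $i \geqslant 1$; replace $i$ by $\max(i,1)$ or first compose with $n_X$.) The genuine gap is in your first step, which is where the entire constructive content of the corollary --- and the only reason cofibrancy appears in the statement --- lives, and which you leave as an unexplained assertion. Moreover your description of what must be checked there is off target: that the finitely many cells of $\Lambda^k[n]$ ``genuinely assemble into a map at a common stage'' is the \emph{existential} part of the compactness argument, and it is constructively unproblematic with no hypothesis on $X$; the paper notes explicitly that for arbitrary $X$ one still obtains the existential right lifting property against horns. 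What fails for non-cofibrant $X$ is the possibility of \emph{choosing} the stage $i$, and hence the filler, as a function of the lifting problem. This matters because, under the paper's conventions, being a Kan complex means being equipped with a chosen solution to every horn-lifting problem, and without such a uniform choice one cannot (absent the axiom of choice) lift against anodyne maps built from infinitely many pushouts of horns.

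The missing ingredient is Lemma \ref{lem:Decidability_for_ExX}: for $X$ cofibrant, each $\Ex^j X$ is again cofibrant and every map $\Ex^j X \rightarrow \Ex^{j+1} X$ is a levelwise decidable inclusion. Since $\Lambda^k[n]$ has only finitely many cells, it is then decidable, for each $j$, whether $\phi$ factors through $\Ex^j X$, and such a factorization is unique because these inclusions are monomorphisms; combined with the choice-free existence of some stage, this yields a \emph{least} $i$ through which $\phi$ factors. Feeding that canonical $i$ into your construction is precisely what makes the filler a well-defined function of $\phi$ and completes the proof; as written, your step 1 is the classical compactness argument with the constructive work still to be done.
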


The proof that follows essentially comes from \cite{cisinski2006prefaisceaux}. If one does not assume that $X$ is cofibrant it still applies to proves that $X$ has the ``existential'' right lifting property against horn inclusion, but it does not seems possible to give a uniform choice of solution to all lifting problems without this assumption. Without such a uniform choice of lifting against horn inclusion one cannot construct solution to lifting problems against more complicated anodyne morphism that involves an infinite number of pushout of horn inclusion, unless we assume the axiom of choice.

\begin{proof}

Lemma \ref{lemma:Ex_fibrant1} allows to show that given any solid diagram as below, there is a dotted filling:

\[\begin{tikzcd}[ampersand replacement=\&]
\Lambda^k [n] \arrow{r} \ar[d,hook] \& \Ex X \ar[d,"n_{\Ex X}"] \\
\Delta[n] \ar[r,dotted] \& \Ex^2 X 
\end{tikzcd}\]

Indeed, through the adjunction the map $\Lambda^k [n] \rightarrow \Ex X$ corresponds to an arrow $\Sd \Lambda^k [n] \rightarrow X$, which due to lemma \ref{lemma:Ex_fibrant1} can be extended in:

\[\begin{tikzcd}[ampersand replacement=\&]
\Sd^2 \Lambda^k [n] \ar[r,"\Sd m_{\Lambda^k[n]}"] \ar[d, "\Sd^2 \_"{swap}] \& \Sd \Lambda^k [n] \ar[r] \& X \\
\Sd^2 \Delta [n] \ar[ur, " \psi^k_n"{swap}] 
\end{tikzcd}\]

The resulting map $\Sd^2 \Delta[n] \rightarrow X$ corresponds to a map $\Delta[n] \rightarrow \Ex^2 X$ which has exactly the right property to make the square above commutes.

Now by smallness of $\Lambda^k[n]$, any map $\Lambda^k[n] \rightarrow \Ex^{\infty} X$ factors in $\Ex^{k} X$, the observation above produces a canonical filling in $\Delta[n] \rightarrow \Ex^{k+1} X$. The choice of the filling, seen as taking values in $\Ex^{\infty} X$, in general depends on $k$, but if one further assume that $X$ is cofibrant, than by lemma \ref{lem:Decidability_for_ExX}, the maps $\Ex^k X \rightarrow \Ex^{k+1} X$ are all level wise decidable inclusion, so there is a smallest $k$ such that the map $\Lambda^k[n] \rightarrow \Ex^{\infty} X$ factors into $\Ex^{k} X$ and this produces a canonical solution to the lifting problem.
\end{proof}

\begin{prop} \label{prop:Ex_infty_Preserves_Fib}
If $f:X \rightarrow Y$ is a fibration (resp. a trivial fibration) with $X$ and $Y$ cofibrant then $\Ex^{\infty} f :\Ex^{\infty} X \rightarrow \Ex^{\infty} Y$ is also a fibration (resp. a trivial fibration).

\end{prop}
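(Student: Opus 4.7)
The plan is to reduce any lifting problem of a generating (trivial) cofibration against $\Ex^{\infty} f$ to a lifting problem at some finite stage $\Ex^j f$, which is already known to be a (trivial) fibration by iterating Proposition \ref{prop:Sd_Ex_preserves_cof_fib}, and to use the levelwise decidability of the inclusions $\Ex^k X \hookrightarrow \Ex^{k+1} X$ and $\Ex^k Y \hookrightarrow \Ex^{k+1} Y$ to choose this $j$ canonically.

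First, I would observe that iterating Proposition \ref{prop:Sd_Ex_preserves_cof_fib} gives that $\Ex^j f : \Ex^j X \to \Ex^j Y$ is a (trivial) fibration for every $j \geqslant 0$, with a chosen (trivial) fibration structure inherited from the one on $f$. Next, given a lifting problem consisting of a compatible pair of maps $K \to \Ex^{\infty} X$ and $\Delta[n] \to \Ex^{\infty} Y$, where $K$ is either $\partial \Delta[n]$ or $\Lambda^k[n]$, the finiteness of $K$ and $\Delta[n]$ forces both legs to factor through $\Ex^j X$ and $\Ex^j Y$ respectively for some $j$. Using Lemma \ref{lem:Decidability_for_ExX}, and the assumption that $X$ and $Y$ are cofibrant, the inclusions $\Ex^k X \hookrightarrow \Ex^{k+1} X$ and $\Ex^k Y \hookrightarrow \Ex^{k+1} Y$ are levelwise decidable, so one can canonically select the smallest $j$ for which both factorizations exist. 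Applying the (trivial) fibration structure on $\Ex^j f$ at this stage produces a lift $\Delta[n] \to \Ex^j X$, and post-composing with $\Ex^j X \to \Ex^{\infty} X$ yields the desired diagonal.

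The main subtlety, and essentially the only one, is constructive: a priori the mere existence of a lift at each stage $j$ does not combine into a single lifting structure on $\Ex^{\infty} f$, because one would need to decide at which stage to construct it. The cofibrancy assumptions on $X$ and $Y$ are precisely what makes this canonical choice possible through Lemma \ref{lem:Decidability_for_ExX}; this is the whole reason those hypotheses appear in the statement, and explains why the classical proof has to be modified here.
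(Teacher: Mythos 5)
Your proposal is correct and follows essentially the same route as the paper: factor the generating lifting problem through a finite stage, use Lemma \ref{lem:Decidability_for_ExX} (iterated, since its second point gives cofibrancy of $\Ex X$) to pick the smallest such stage canonically, solve the lift there via Proposition \ref{prop:Sd_Ex_preserves_cof_fib}, and compose into $\Ex^{\infty}X$. The remark that cofibrancy is exactly what turns stagewise existence of lifts into a single chosen lifting structure is also the point the paper makes.
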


Similarly to what happen with corollary \ref{cor:Ex_infty_fibrant}, without the assumption that $X$ and $Y$ are cofibrant it is only possible to obtain the ``existential'' form of the lifting property and no canonical choice of lifting.

\begin{proof}
Given a lifting problem:

\[\begin{tikzcd}
\Lambda^k[n] \ar[r] \ar[d,hook,"\sim"] & \Ex^{\infty} X  \ar[d] \\
\Delta[n] \ar[r] & \Ex^{\infty} Y 
\end{tikzcd}\]

There is an $i$ such that it factors into:

\[\begin{tikzcd}
\Lambda^k[n] \ar[r] \ar[d,hook,"\sim"] & \Ex^i X \ar[d] \ar[r] & \Ex^{\infty} X  \ar[d] \\
\Delta[n] \ar[r] & \Ex^i Y \ar[r] &\Ex^{\infty} Y 
\end{tikzcd}\]

Moreover, assuming $X$ and $Y$ are cofibrant, lemma \ref{lem:Decidability_for_ExX} shows that $\Ex^i X \subset \Ex^{i+1} X$ are levelwise decidable inclusion, so (by finiteness of $\Lambda^k[n]$ and $\Delta[n]$) the set of $i$ such that a factorization as above exists is decidable, and hence there is a smallest such $i$. Proposition \ref{prop:Sd_Ex_preserves_cof_fib} shows that $\Ex^i f$ is a fibration, so the first square has a diagonal lifting and this concludes the proof.

\end{proof}

\subsection{S.Moss' proof that $X \rightarrow \Ex X$ is anodyne} \label{subsec:KanEx_anodyne}

Let $f:X \rightarrow Y$ be a simplicial morphisms. One has a square:

\[\begin{tikzcd}
X \ar[d] \ar[r] & Y \ar[d] \\
\Ex^{\infty} X \ar[r] & \Ex^{\infty} Y
\end{tikzcd}\]

 Our goal in this section is to show that when $X$ is cofibrant the induced map:

\[ X \rightarrow \Ex^{\infty} X \fprod_{Ex^{\infty} Y} Y \]

is a strong anodyne morphism. Note that if $Y=\Delta[0]$ is the terminal object, then $Ex^{\infty}(Y)=Y$  hence the statement above boils down to the fact that $X \rightarrow Ex^{\infty} X$ is a strong anodyne morphism. The idea to consider this morphisms comes form D.C Cisinski's book \cite[Cor 2.1.32]{cisinski2006prefaisceaux}, but the proof below follows closely the proof given by S.Moss in \cite{moss2015another} that $X \rightarrow \Ex^{\infty} X$ is strong anodyne.

Following the argument given in \cite[Cor 2.1.32]{cisinski2006prefaisceaux} (reproduced in the proof of corollary \ref{cor:X->ExinftyX_Anodyne} below), it will be enough to show:

\begin{prop} \label{prop_XtoExyXisAnodyne}

Given $f :X \rightarrow Y$ a simplicial morphisms, with $X$ cofibrant, then the morphism:

\[ X \rightarrow \Ex X \fprod_{\Ex Y} Y \]

is strongly anodyne.

\end{prop}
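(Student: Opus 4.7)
The plan is to construct an explicit $P$-structure on the cofibration $X \to W := \Ex X \fprod_{\Ex Y} Y$, adapting to the relative setting the construction that S.\ Moss gives in \cite{moss2015another} for the absolute case $Y = \Delta[0]$. An $n$-cell of $W$ is a compatible pair $(\alpha, y)$ where $\alpha : \Sd \Delta[n] \to X$, $y : \Delta[n] \to Y$, and $f \circ \alpha = y \circ m_{\Delta[n]}$; such a cell lies in the image of $X$ if and only if $\alpha$ factors through $m_{\Delta[n]}$, which, because $X$ is cofibrant (so that degeneracy in $X$ is decidable and the Eilenberg--Zilber lemma holds, see \cite[Lem.~5.1.2]{henry2018weakmodel}), is a decidable condition on $(\alpha,y)$. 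Similarly, the condition of being a non-degenerate cell is decidable on $W$ under our cofibrancy assumption, which is what makes the partition $B_\I \coprod B_\II$ a decidable partition.

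Next, I would transplant Moss's combinatorial pairing $P$ directly onto $W$. For a non-degenerate cell $(\alpha,y)$ of $W$ that is not in $X$, the classification into type $\I$ or type $\II$, and the assignment of $P(\alpha,y)$ when of type $\II$, depends only on the structure of $\alpha : \Sd \Delta[n] \to X$ (viewed as an order-preserving map of posets $\Kcal[n] \to \{\text{simplices of } X\}$), using the maximum-vertex criterion that Moss exploits in the proof of his main theorem; the partner cell is defined by inserting or deleting a distinguished vertex. The $y$-component is carried along passively: the unique $y' : \Delta[n+1] \to Y$ (or its restriction) dictated by the compatibility condition with the new $\alpha'$ is forced, and the resulting pair is again a cell of $W$ because the interpolated vertex lies above the same ``row'' of $Y$. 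The axioms $\dim P(\alpha,y) = \dim(\alpha,y) + 1$ and existence of a unique $i$ with $d_i P(\alpha,y) = (\alpha,y)$ then hold for exactly the combinatorial reasons they hold for Moss's $P$-structure on $X \to \Ex X$, since only the $\alpha$-coordinate is affected.

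The substantive step is the finite $P$-height axiom. By lemma~\ref{lem:Finite_weak_Pheight_is_enough} it suffices to show that each non-degenerate type $\II$ cell $(\alpha,y)$ has finite weak $P$-height. I would do this by defining, again using the cofibrancy of $X$, a concrete numeric invariant $\mu(\alpha,y) \in \N$ (for instance, something like the number of ``descent pairs'' in the poset map underlying $\alpha$, or an explicit measure on its support in $\Kcal[n]$) such that every element of $Ant_{\II}(\alpha,y)$ has strictly smaller $\mu$. Then $Ant_{\II}^{\mu(\alpha,y)+1}(\alpha,y) = \emptyset$ gives the required bound, replacing Moss's well-foundness argument by an explicit constructive witness.

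The main obstacle is precisely this last step: Moss's original argument invokes well-foundness of a relation on non-degenerate cells, which is delicate to make constructive. Reformulating it as a concretely decreasing numeric invariant requires the decidability of degeneracy of cells in $X$, which is exactly what the cofibrancy hypothesis on $X$ provides. Once this bound is in place, corollary \ref{cor:X->ExinftyX_Anodyne} follows by iterating the proposition along the tower $X \to \Ex X \to \Ex^2 X \to \dots$ and passing to the colimit, anodynicity being stable under $\omega$-transfinite composition.
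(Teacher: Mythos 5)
Your overall strategy is the same as the paper's: build an explicit $P$-structure on $X \rightarrow \Ex X \fprod_{\Ex Y} Y$ following Moss, extract the needed decidability from cofibrancy of $X$, and replace well-foundedness by the finite weak $P$-height criterion of lemma \ref{lem:Finite_weak_Pheight_is_enough}. The genuine gap is at the step you yourself flag as the main obstacle: you never actually define the pairing, the type $\I$/$\II$ partition, or the decreasing invariant — you only posit that ``something like the number of descent pairs'' should work. In the paper all three come from a single piece of structure: Moss's operators $j^k_n$ and $r^k_n$ and the filtration $X_n = J^0_n \subset J^1_n \subset \dots \subset J^n_n = (\Ex X)_n$ by the images of the idempotents $j^k_n$, which preserve $\Ex_Y X$ by equations \eqref{eq:Moss0} and \eqref{eq:Moss8} (this is the precise content of your remark that the $Y$-component is ``carried along passively''). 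One sets $Px = x\, r^k_n$ for $k$ minimal with $x \in J^k_n$, declares type $\I$ the cells of the form $y\, r^k_n$ with $y \in J^k_n$, and the decreasing invariant is exactly this filtration index $k$: points \ref{Lem:KeyToExUnitPstructure:FacesofPx_smaller_than_k} and \ref{Lem:KeyToExUnitPstructure:FacesofPx_bigger_than_k+1} of lemma \ref{Lem:KeyToExUnitPstructure} show that $Ant_{\II}(x) \subset J^{k-1}_n$, so the weak $P$-height of $x$ is at most $k$. Without identifying this filtration you cannot even define $P$ constructively (you need decidability of the $J^k_n$, lemma \ref{lem:Decidability_for_ExX}, to pick the minimal $k$), you have no decidable criterion for the partition (in the paper: $x$ is type $\I$ iff $Px$ is degenerate, point \ref{Lem:KeyToExUnitPstructure:TypeI=P_is_degen}), and your candidate invariants are not shown to decrease along $Ant_{\II}$; the axioms you wave through ``for exactly the combinatorial reasons they hold for Moss'' are precisely the content of lemmas \ref{lem:MossEq} and \ref{Lem:KeyToExUnitPstructure}, which is where the work lies.

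A secondary, more minor, gap: your decidability claims (membership in $X$, non-degeneracy in $\Ex_Y X$, and, implicitly, membership in the $J^k_n$) are asserted from cofibrancy of $X$ plus Eilenberg--Zilber, but each of them is a question of whether a map $\Sd \Delta[n] \rightarrow X$ factors through a given epimorphism ($\Sd\Delta[n] \rightarrow \Delta[n]$, $\Sd \sigma$, or $j^k_n$), and deciding such a factorization needs a criterion. The paper supplies it by showing these epimorphisms are degeneracy quotients (proposition \ref{Prop:retractOfPosetAndDegeneracyQuotient}) and that factorization of a map from a finite decidable source through a degeneracy quotient, into a cofibrant target, is decidable (lemma \ref{lem:decidability_lifting_degenQuo}). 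This part is routine once stated, but it is exactly the kind of detail a constructive proof has to make explicit.
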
 

The proof will be concluded in \ref{proof_of_prop_XtoExyXisAnodyne}, essentially, we will construct an explicit $P$-structure on this map. This construction is mostly due to S.Moss in \cite{moss2015another}. In addition to the dependency in $Y$, the main new contributions of this paper in this section is to show that assuming $X$ is cofibrant one can show that sufficiently many decidability conditions can be proved to make S.Moss' argument constructive. In order to do that properly one needs to completely reproduce his argument.

Following, \cite{moss2015another} one introduces two functions between the $\Sd \Delta[n]$.

Let $j^k_n: \Sd \Delta[n] \rightarrow \Sd \Delta[n]$ and $r^k_n : \Sd \Delta [n+1] \rightarrow \Sd \Delta [n]$ be the maps defined at the level of posets by:

\[ j^k_n \{ i \} = \left\lbrace \begin{array}{l l}
\{i\} & \text{ if $i \leqslant k$ } \\
\{0,\dots,i\} & \text{ if $i >  k$ }\\
\end{array} \right. \qquad r^k_n \{ i \} = \left\lbrace \begin{array}{l l}
\{i\} & \text{ if $i \leqslant k$ } \\
\{0,\dots,i-1\} & \text{ if $i =  k+1$ }\\
\{ i-1 \} & \text{ if $i>k+1$}
\end{array} \right. \]

Both extended to non-singleton elements as binary join preserving maps. These functions satisfies a certain number of equations, we list here those that we will need, they are all due to S.Moss.

\begin{lemma} \label{lem:MossEq} 
\begin{align}
\label{eq:Moss0} j^k_n j^h_n & =   j^h_n j^k_n=j^h_n & & 0 \leqslant h \leqslant k \leqslant n \\
  Id_{\Delta[n]} & = r^k_n \circ \Sd \partial^{k+1}_{n+1} &  & 0 \leqslant k \leqslant n \label{eq:Moss1}\\
\label{eq:MossP1} j^k_n r^k_n & = (\Sd \sigma^k_n) j^k_{n+1} & & 0 \leqslant k \leqslant n \\
\label{eq:Moss5} j^h_n r^k_n &= j^h_n (\Sd \sigma^k_n) & & 0 \leqslant h < k \leqslant n \\
\label{eq:Moss8} r^k_n j^h_{n+1} & = j^h_n r^k_n & & 0 \leqslant h \leqslant k \leqslant n \\
\label{eq:Moss3} r^k_n (\Sd \partial^{i+1}_{n+1}) & = (\Sd \partial^{i}_n) r^k_{n-1} & & 0 \leqslant k < i \leqslant n \\
\label{eq:Moss7} j^k_n r^k_n r^k_{n+1} & = j^k_n r^k_n (\Sd \sigma^{k+1}_{n+1}) & & 0 \leqslant k \leqslant n \\
\label{eq:Moss4} j^k_{n+1} (\Sd \partial^{h}_{n+1}) j^k_n & = j^k_{n+1} (\Sd \partial^{h}_{n+1}) & & 0 \leqslant k \leqslant n \text{ and } 0 \leqslant h \leqslant n+1  \\
\label{eq:Moss2} j^k_n r^k_n (\Sd \partial^i_{n+1}) j^{k-1}_n &= j^k_n r^k_n (\Sd \partial^i_{n+1}) & &  0 \leqslant i \leqslant k \leqslant n \\
\label{eq:Moss9} (\Sd \sigma^h_{n}) j^k_{n+1} r^k_{n+1} &= j^{k-1}_n r^{k-1}_n (\Sd \sigma^h_{n+1}) & & 0 \leqslant h < k \leqslant n+1 \\
\label{eq:Moss10} (\Sd \sigma^h_{n}) j^k_{n+1} r^k_{n+1}& = j^{k}_n r^{k}_n (\Sd \sigma^{h+1}_{n+1}) & & 0 \leqslant k \leqslant h \leqslant n
\end{align}
\end{lemma}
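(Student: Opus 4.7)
The plan is to exploit the fact that every map appearing on either side of (\ref{eq:Moss0})--(\ref{eq:Moss10}) is the nerve of a binary-join-preserving order-preserving map between the posets $\Kcal[n]$. Indeed, for a morphism $f: [n] \to [m]$ in $\Delta$, the induced map $\Sd f : \Kcal[n] \to \Kcal[m]$ is given by direct image of finite decidable subsets, and direct image preserves binary joins (unions). The maps $j^k_n$ and $r^k_n$ are by definition extended from singletons as binary-join-preserving maps. Since every element $\{a_0, \dots, a_p\}$ of $\Kcal[n]$ is the binary join $\{a_0\} \vee \cdots \vee \{a_p\}$ of its singleton subsets, a binary-join-preserving map on $\Kcal[n]$ is completely determined by its values on singletons $\{i\}$. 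So to prove each identity it suffices to evaluate both sides on an arbitrary singleton and check equality, using the explicit piecewise definitions of $j^k_n$ and $r^k_n$.

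Concretely, I would verify each of the eleven equations in turn, in each case performing a small case analysis on the position of $i \in [n]$ (or $[n+1]$) relative to the thresholds $h$, $k$. For the simplest identity (\ref{eq:Moss0}), one has $j^k_n\{i\} = \{i\}$ if $i \leq k$ and $j^k_n\{i\} = \{0,\dots,i\}$ otherwise, so $j^h_n j^k_n\{i\}$ and $j^h_n\{i\}$ are easily seen to agree in each subcase $i \leq h$, $h < i \leq k$, $i > k$ (for the middle case, one must use join-preservation to compute $j^h_n\{0,\dots,i\}$). For (\ref{eq:Moss1}), $\Sd \partial^{k+1}_{n+1}\{i\} = \{i\}$ for $i \leq k$ and $\{i+1\}$ for $i > k$, and applying $r^k_n$ recovers $\{i\}$ in both cases. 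Identity (\ref{eq:MossP1}) splits into cases $i \leq k$, $i = k+1$, $i > k+1$; in the middle case both sides yield $\{0,\dots,k\}$ (the right-hand side because $\sigma^k_n$ identifies $k$ and $k+1$, so its direct image of $\{0,\dots,k+1\}$ is $\{0,\dots,k\}$). The remaining identities (\ref{eq:Moss5})--(\ref{eq:Moss10}) follow the same pattern: write out both sides on $\{i\}$, split on the position of $i$, and observe the values match; the joins-of-singletons step is only needed when an intermediate application of $r^k_n$ or $\Sd \partial^i$ or $\Sd \sigma^i$ produces a non-singleton, in which case one applies binary-join-preservation to the next map.

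The only genuine subtlety, and the main obstacle to expect, is bookkeeping with the edge cases where $r^k_n$ sends a singleton to a non-singleton (namely $r^k_n\{k+1\} = \{0,\dots,k\}$): whenever this happens inside a composite, one must unfold the next map using its join-preserving extension rather than reading its definition on singletons directly. For instance, in (\ref{eq:Moss8}) with the input $\{k+1\}$, one has $r^k_n j^h_{n+1}\{k+1\} = r^k_n\{0,\dots,k+1\} = \{0\} \cup \cdots \cup \{k\} \cup \{0,\dots,k\} = \{0,\dots,k\}$, and separately $j^h_n r^k_n\{k+1\} = j^h_n\{0,\dots,k\} = \{0,\dots,k\}$; these coincide only because the union absorbs the prefix. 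Once one is careful about this, each identity reduces to at most four or five straightforward arithmetic checks on integers, all of which are completely constructive (requiring only decidable equality and order on $\N$). No well-foundedness or choice principle intervenes, so the proof goes through verbatim in the constructive setting required by the paper.
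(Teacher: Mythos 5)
Your proposal is correct and is essentially the paper's own proof: the paper likewise observes that all the maps involved are nerves of binary-join-preserving maps between the posets $\Kcal[n]$, hence are determined by their values on singletons $\{i\}$, and then verifies each identity by the explicit piecewise formulas. Your extra care about intermediate non-singleton values (e.g.\ $r^k_n\{k+1\}=\{0,\dots,k\}$) is exactly the right bookkeeping and your sample computations check out.
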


\begin{proof}All the functions involved are nerve of join preserving maps between the $\Kcal[n]$, so it is enough to check the relations at the level of posets and when function are evaluated at $\{i\}$, where one has explicit formula for all of them. \end{proof}

As functions between the $\Sd \Delta[n]$, $j^k_n$ and $r^k_n$ automatically acts one the cells of $\Ex X$. One denotes this action by $x \mapsto x j^k_n$ and $x \mapsto x r^k_n$ which is compatible to the identification of cells of $\Ex X$ with functions $\Sd \Delta[n] \rightarrow x$.

By equation (\ref{eq:Moss0}), the $j^{k}_n$ are an increasing family of commuting projection whose image defines a series of subsets:

\[X_n =  J^0_n \subset J^1_n \subset \dots J^n_n = (\Ex X)_n \]

where the identifications with $(\Ex X)_n$ and $X_n$ comes from the fact that $j^n_n$ is the identity, and $j^0_n : \Kcal [n] \rightarrow \Kcal [n]$ has image isomorphic to $[n]$, with $j^0_n : \Kcal [n] \rightarrow [n]$ being the ``Max'' function used in the definition of the natural transformation $\Sd \Delta[n] \rightarrow \Delta[n]$.

We define:

 \[ \Ex_Y(X) = \Ex X \fprod_{\Ex Y} Y \]

An $n$-cell in $\Ex_Y$ is a morphism $\Sd \Delta[n] \rightarrow X$ whose image in $Y$ factors through the map $\Sd \Delta[n] \rightarrow \Delta[n]$. I.e. it is an $n$-cell of $x \in (\Ex X)_n$ which satisfies:

\[ f x j^0_n = f x \]

Note that because of relation (\ref{eq:Moss0}) and (\ref{eq:Moss8}), $\Ex_Y X$ is stable under the action of $j^k_n$ and $r^k_n$.

Before going any further, one needs to state some decidability conditions:

\begin{lemma}\label{lem:Decidability_for_ExX} If $X$ is a cofibrant simplicial set, then:

\begin{enumerate}

\item The inclusion $X \subset \Ex_Y X$ is levelwise decidable.

\item $\Ex_Y X$ is cofibrant and $X \rightarrow \Ex_Y X$ is a cofibration.

\item The sets $J^n_k \subset (\Ex_Y X)_n$ are decidable.

\end{enumerate}

\end{lemma}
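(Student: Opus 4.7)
My plan is to derive all three statements from the degeneracy-quotient machinery of Subsection~\ref{subsection:Degen_quo}, most centrally from Proposition~\ref{Prop:retractOfPosetAndDegeneracyQuotient} (which identifies degeneracy quotients arising from order-preserving idempotent retractions of posets above or below the identity) and Lemma~\ref{lem:decidability_lifting_degenQuo} (which supplies decidability of diagonal lifts against a degeneracy quotient of finite decidable simplicial sets whenever the target is cofibrant). The unifying idea is that each subset of $(\Ex_Y X)_n$ whose decidability we seek is defined by the existence of a factorisation of the map $x : \Sd\Delta[n] \to X$ through some canonical quotient of $\Sd\Delta[n]$, and in each case we will identify this quotient as a degeneracy quotient between finite decidable simplicial sets.

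I would treat (1) and (3) in parallel. A cell $x \in (\Ex_Y X)_n$ lies in $J^k_n$ precisely when $x\, j^k_n = x$, equivalently when $x : \Sd\Delta[n] \to X$ factors through the quotient $\Sd\Delta[n] \twoheadrightarrow N(j^k_n \Kcal[n])$. The idempotent $j^k_n : \Kcal[n] \to \Kcal[n]$ is order-preserving and satisfies $j^k_n S \supseteq S$: on singletons this is immediate from the defining formula, and it is preserved by the binary-join extension to arbitrary elements of $\Kcal[n]$. Proposition~\ref{Prop:retractOfPosetAndDegeneracyQuotient} then identifies $\Sd\Delta[n] \twoheadrightarrow N(j^k_n \Kcal[n])$ as a degeneracy quotient of finite decidable simplicial sets, and Lemma~\ref{lem:decidability_lifting_degenQuo} supplies the decision procedure for the factorisation (using cofibrancy of $X$). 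This is (3). For (1) one specialises to $k = 0$: since $j^0_n$ is the max map, its image is the chain of singletons, so $N(j^0_n \Kcal[n]) \cong \Delta[n]$, and the factorisation condition becomes exactly the condition $x \in X_n \subseteq (\Ex_Y X)_n$.

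For (2), the inclusion $X \to \Ex_Y X$ is a monomorphism because the unit $X \to \Ex X$ is obtained, via adjunction, from the epimorphism $\Sd\Delta[n] \twoheadrightarrow \Delta[n]$ and is therefore levelwise injective, and monomorphisms are preserved by pullback. Levelwise complementedness of $X \hookrightarrow \Ex_Y X$ is precisely (1); moreover, for a cell $x$ already in $X$, degeneracy in $\Ex_Y X$ coincides with degeneracy in $X$ (since the candidate predecessor $d_i x$ again lies in $X$), hence is decidable by cofibrancy of $X$. What remains, and constitutes the main obstacle, is to decide degeneracy of cells in $(\Ex_Y X) \setminus X$. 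A cell $x : \Sd\Delta[n] \to X$ is $\sigma^j$-degenerate iff $x = x \circ \Sd(d^j \sigma^j)$; but here Proposition~\ref{Prop:retractOfPosetAndDegeneracyQuotient} does \emph{not} apply directly, because the induced idempotent $(d^j \sigma^j)_* : \Kcal[n] \to \Kcal[n]$ (which replaces $j$ by $j{+}1$ in a subset) is incomparable with the identity. The plan is therefore to test the equation $x = x \circ \Sd(d^j \sigma^j)$ directly as a finite conjunction of equalities between $x(S)$ and $x((d^j \sigma^j)_* S)$ indexed by the finitely many non-degenerate chains $S$ of $\Sd\Delta[n]$, reducing via Lemma~\ref{lem:SigmaDegen_in_terms_of_degen} to cells in $X$ and appealing to the Eilenberg--Zilber decomposition of cofibrant $X$ to resolve these equalities. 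Running this test over the finitely many $j \in \{0,\dots,n-1\}$ then yields the decidability of degeneracy and completes the proof.
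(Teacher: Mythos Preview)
Your treatment of (1) and (3) is correct and matches the paper's proof exactly: the idempotent $j^k_n$ on $\Kcal[n]$ satisfies $j^k_n(S)\supseteq S$, so Proposition~\ref{Prop:retractOfPosetAndDegeneracyQuotient} applies and Lemma~\ref{lem:decidability_lifting_degenQuo} finishes; the case $k=0$ gives (1).

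The gap is in (2). You assert that Proposition~\ref{Prop:retractOfPosetAndDegeneracyQuotient} does not apply to the quotient $\Sd(\sigma^j):\Sd\Delta[n]\to\Sd\Delta[n-1]$, but it does---you simply chose the wrong idempotent. Rather than the direct-image idempotent $(d^j\sigma^j)_*$ (which, as you note, is incomparable to the identity), take $\pi(S)=(\sigma^j)^{-1}\bigl(\sigma^j(S)\bigr)$, the $\sigma^j$-saturation of $S$. This is order-preserving, idempotent, satisfies $\pi(S)\supseteq S$ for all $S$, and its image is canonically identified with $\Kcal[n-1]$ via $\sigma^j_*$. Proposition~\ref{Prop:retractOfPosetAndDegeneracyQuotient} then shows $\Sd(\sigma^j)$ is a degeneracy quotient of finite decidable simplicial sets, and Lemma~\ref{lem:decidability_lifting_degenQuo} gives decidability of the factorisation, i.e.\ of $\sigma^j$-degeneracy in $\Ex X$. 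This is exactly the paper's argument, and it keeps all three parts uniform.

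Your proposed workaround---testing $x = x\circ\Sd(d^j\sigma^j)$ as a finite conjunction of equalities $x(S)=x\bigl((d^j\sigma^j)_*S\bigr)$ in $X$---is problematic constructively: cofibrancy of $X$ gives decidable \emph{degeneracy}, not decidable \emph{equality} of cells. The appeal to Lemma~\ref{lem:SigmaDegen_in_terms_of_degen} and Eilenberg--Zilber does not obviously close this gap, since even after reducing both sides to their non-degenerate cores you are still left comparing two non-degenerate cells of $X$ for equality.
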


\begin{proof} All these decidability problems corresponds to the decidability of a factorization of a map $\Sd \Delta[n] \rightarrow X$ through some epimorphism $\Sd \Delta[n] \rightarrow K$. In all this case we will show that the corresponding epimorphism is a degeneracy quotient using lemma \ref{Prop:retractOfPosetAndDegeneracyQuotient} and conclude about the decidability using lemma \ref{lem:decidability_lifting_degenQuo}.

\begin{enumerate}

\item It corresponds to the map $\Sd \Delta[n] \rightarrow \Delta[n] $ which is the nerve of the max function $\Kcal[n] \rightarrow [n]$, whose section $i \mapsto \{0,\dots,i\}$ satisfies the condition of lemma \ref{Prop:retractOfPosetAndDegeneracyQuotient}.

\item  One just needs to check degeneracy are decidable in $\Ex X$, so it is about the epimorphism $\Sd (\sigma) : \Sd \Delta[n] \rightarrow \Sd \Delta[m]$ for any degeneracy $\sigma$. It is the nerve of $\sigma: \Kcal [n] \rightarrow \Kcal[m]$ which has a section satisfying the condition of lemma \ref{Prop:retractOfPosetAndDegeneracyQuotient} which send every $P \in \Kcal[m]$ to $\sigma^{-1}P$

\item It corresponds to the map $j^k_n: \Sd \Delta[n] \rightarrow j^k_n (\Sd \Delta[n])$, which is just is the nerve of the projection $j^k_n : \Kcal [n] \rightarrow j^k_n \Kcal [n]$ which is already of the form of lemma \ref{Prop:retractOfPosetAndDegeneracyQuotient}.

\end{enumerate}

\end{proof}

We can now give the definition of the $P$-structure on $X \hookrightarrow \Ex_Y X$.

\begin{itemize}

\item Type $\I$ cells are the non-degenerated cells $v \in \Ex_Y(X)$ which are not\footnote{It appears that because of point \ref{Lem:KeyToExUnitPstructure:xPxSameLevel} of lemma \ref{Lem:KeyToExUnitPstructure} and the fact that $r^0_n$ is the same as $\Sd \sigma_0$ it is actually a consequence from the rest of the definition that type $\I$ cells are not in $X$.} in $X$  and can be written as $y r^k_n$ with $y \in J^k_n \subset \Ex_Y X$.

\item Point \ref{Lem:KeyToExUnitPstructure:TypeI=P_is_degen} of lemma \ref{Lem:KeyToExUnitPstructure} will prove that being type $\I$ is decidable. Type $\II$ cells are just the cells that are not of type $\I$ (and which are non-degenerated and not in $X$).

\item For any cell $x$ one defines $P x$ as $x r^k_n$ where $k$ is the smallest integer such that $x \in J^k_n$, i.e. $x \in J^k_n - J^{k-1}_n$. Lemma \ref{lem:Decidability_for_ExX} shows that the $J^k_n$ are decidable so there is indeed such a smaller integer $k$.

\end{itemize}

In order to show that being type $\I$ is decidable and that $P$ defined this way defines a bijection from type $\II$ cells to type $\I$ cells, one needs a few technical lemma that we have regrouped in:

\begin{lemma} \label{Lem:KeyToExUnitPstructure}

\begin{enumerate}

\item[]

\item \label{Lem:KeyToExUnitPstructure:d_k+1P x =x} If $ x \in J^k_n - J^{k-1}_n$, then $d_{k+1} P x =x$.

\item \label{Lem:KeyToExUnitPstructure:xPxSameLevel} $x \in J^k_n$ if and only if $P x \in J^k_{n+1}$

\item \label{Lem:KeyToExUnitPstructure:jk-1rk_degenerated} If $x \in J^{k-1}_n$ then $x r^k_n$ is degenerate.

\item \label{Lem:KeyToExUnitPstructure:P2Degenerated} $P^2 x$ is always degenerated.

\item \label{Lem:KeyToExUnitPstructure:PTIouDegenIsdegen} If $x$ is degenerated or type $\I$ or in $X$, then $Px$ is degenerated.

\item \label{Lem:KeyToExUnitPstructure:FacesofPx_smaller_than_k} If $x \in J^k_n - J^{k-1}_n$ then for all $i \leqslant k$ $d_i (P x) \in J^{k-1}_n$.

\item \label{Lem:KeyToExUnitPstructure:FacesofPx_bigger_than_k+1} If  $x \in J^k_n - J^{k-1}_n$  then for all $i$, with $k+1 <i \leqslant n+1$,  $d_i (Px)$ is either of type $\I$ or degenerated.

\item \label{Lem:KeyToExUnitPstructure:TypeI=P_is_degen} A non-degenerated cell $x$ in $(\Ex_Y X)_n - X_n$ is type $\I$ if and only $Px$ is degenerated.

\end{enumerate}

\end{lemma}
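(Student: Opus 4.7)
My approach is to prove the eight parts by direct computation using the relations of Lemma~\ref{lem:MossEq}, under the convention that a cell $x \in (\Ex_Y X)_n$ is identified with its corresponding map $\tilde x : \Sd \Delta[n] \to X$, so that the action of any operator on $\Sd \Delta[\bullet]$ corresponds to precomposition with that operator. I would address the parts in roughly this order: (1), (6), (7), (3), (2), (4), (5), (8), grouping the short algebraic parts first and leaving the converse direction of (8) for last.

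Part (1) is immediate from \eqref{eq:Moss1}. Part (3) follows by inserting $j^{k-1}_n$ on the left and applying \eqref{eq:Moss5}, which converts $r^k_n$ into $\Sd \sigma^k_n$ and exhibits $x r^k_n$ as a degeneracy. Parts (6) and (7) are applications of \eqref{eq:Moss2} (giving $d_i Px \in J^{k-1}_n$ for $i \leq k$) and of the combination of \eqref{eq:Moss3} and \eqref{eq:Moss4} (rewriting $d_i Px$ for $i > k+1$ as $(d_{i-1}x) \cdot r^k_{n-1}$ with $d_{i-1}x \in J^k_{n-1}$, hence type $\I$ or degenerate). Part (2) has two directions: the forward one combines \eqref{eq:Moss8} at the critical level $k_0$ with \eqref{eq:Moss0} to extend to all $k \geq k_0$, and the converse is obtained by applying $\Sd \partial^{k_0+1}_{n+1}$ on the right to $Px \cdot j^k_{n+1} = Px$ and then combining \eqref{eq:Moss1} with \eqref{eq:Moss4} to recover $x = x \cdot j^k_n$. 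As a byproduct, the minimum level of $Px$ coincides with that of $x$, so that Part (4) becomes a direct application of \eqref{eq:Moss7}: $P^2 x = \tilde x \circ r^{k_0}_n \circ r^{k_0}_{n+1}$ factors through $\Sd \sigma^{k_0+1}_{n+1}$ after inserting $j^{k_0}_n$. Part (5) splits into three cases: for $x \in X$, one combines \eqref{eq:MossP1} with the naturality identity $\mathrm{Max}_n \circ r^0_n = \sigma^0_n \circ \mathrm{Max}_{n+1}$; for $x$ of type $\I$, it is the same use of \eqref{eq:Moss7} as in Part (4); for $x$ degenerate, say $\tilde x = \tilde y \circ \Sd \sigma^h_{n-1}$, one commutes $\Sd \sigma^h$ past $j^{k_0}_n r^{k_0}_n$ using \eqref{eq:Moss9} or \eqref{eq:Moss10} according to whether $h < k_0$ or $k_0 \leq h$, exhibiting $\tilde{Px}$ as factoring through $\Sd$ of a degeneracy.

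The main obstacle is the converse direction of (8): given $x$ non-degenerate with $x \notin X$ (so $k_0 \geq 1$) and $Px$ degenerate, show that $x$ is of type $\I$. I would write $Px = w \cdot \Sd \sigma^j_n$ for some single-step degeneracy, apply $\Sd \partial^{k_0+1}_{n+1}$ on the right, and use \eqref{eq:Moss1} to obtain $\tilde x = \tilde w \circ \Sd(\sigma^j_n \partial^{k_0+1}_{n+1})$. For $j \notin \{k_0, k_0+1\}$, the composite $\sigma^j \partial^{k_0+1}$ decomposes as a non-trivial degeneracy followed by a face map, which would make $x$ degenerate, contradicting the hypothesis. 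The case $j = k_0$ is excluded by a more delicate argument: applying $\Sd \partial^{k_0}_{n+1}$ to the identity $\tilde x \circ r^{k_0}_n = \tilde w \circ \Sd \sigma^{k_0}_n$ and using the simplicial identity $\sigma^{k_0} \partial^{k_0} = \mathrm{Id}$ together with \eqref{eq:Moss4} produces $\tilde w = \tilde x \circ j^{k_0-1}_n$, and then \eqref{eq:Moss5} combined with a second use of \eqref{eq:Moss1} forces $\tilde x = \tilde x \circ j^{k_0-1}_n$, contradicting the minimality of $k_0 \geq 1$ (here the hypothesis $x \notin X$ is essential). This leaves only $j = k_0 + 1$, in which case the constraint $\tilde x \circ r^{k_0}_n = \tilde w \circ \Sd \sigma^{k_0+1}_n$ exhibits $\tilde x$ as factoring through $r^{k_0}_{n-1}$ via $\tilde y := \tilde x \circ \Sd \partial^{k_0+1}_n$, and an application of \eqref{eq:Moss4} together with $\tilde x \circ j^{k_0}_n = \tilde x$ shows $y \in J^{k_0}_{n-1}$, so that $x = y \cdot r^{k_0}_{n-1}$ is indeed of type $\I$.
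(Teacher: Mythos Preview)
Your proof is essentially correct and follows the same overall strategy as the paper: each item is deduced from one or two of the relations in Lemma~\ref{lem:MossEq}, and the forward direction of (8) is Part~(5). The chief difference is in the converse of~(8). The paper argues more economically by reusing Parts~(6) and~(7): since $x=d_{k_0+1}Px$ is non-degenerate, a degenerate $Px$ can only be $\sigma^{k_0}$- or $\sigma^{k_0+1}$-degenerate; in the first case $x=d_{k_0}Px\in J^{k_0-1}_n$ by~(6), a contradiction, and in the second $x=d_{k_0+2}Px$ is type~$\I$ by~(7). Your argument unwinds these same facts by hand, which is longer but arrives at the same place.

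Two minor imprecisions to tighten. In Part~(5), the type~$\I$ case is not literally ``the same use of \eqref{eq:Moss7}'': writing $x=y\,r^h_{n-1}$ with $y\in J^h_{n-1}$ does not ensure $h$ is the minimal level of $y$; the paper handles this by splitting into $y\in J^{h-1}_{n-1}$ (then $x$ is degenerate by~(3), reducing to the degenerate case) and $y\notin J^{h-1}_{n-1}$ (then $x=Py$ and $Px=P^2y$ is degenerate by~(4)). In your $j=k_0$ subcase of~(8), the step ``\eqref{eq:Moss4} produces $\tilde w=\tilde x\circ j^{k_0-1}_n$'' does not follow from \eqref{eq:Moss4} alone; the identity you need is $j^{k_0}_n r^{k_0}_n(\Sd\partial^{k_0}_{n+1})=j^{k_0-1}_n$, which one gets from \eqref{eq:MossP1} together with a direct check (or, more simply, bypass this entirely by using $d_{k_0}Px=d_{k_0+1}Px=x$ combined with Part~(6) as the paper does).
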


\begin{proof}

\begin{enumerate}

\item $d_{k+1} P x$ is $x  r^k_n  (\Sd \partial^{k+1})$ which is equal to $x$ by equation \eqref{eq:Moss1}.

\item Let $k$ is the smallest value such that $x j^k_n =x$, i.e.$ P x = x r^k_n$. Equation \eqref{eq:Moss8} gives $x r^k_n j^k_{n+1} =x j^k_n r^k_n =x r^k_n$. Hence $P x \in J^k_{n+1}$, in particular $x \in J^h_n \Rightarrow k \leqslant h \Rightarrow Px \in J^h_{n+1}$. Conversely, if $P x \in J^k_{n+1}$ then:

\[\begin{array}{r c l r}
x j^k_n & = & (Px) (\Sd \partial^{h+1}) j^k_n & \text{(as $x=d_{h+1} Px$)} \\
 & =&  (Px) j^k_{n+1} (\Sd \partial^{h+1}) j^k_n & \text{( as $P x \in J^k_{n+1}$)} \\
 & = &  (Px) j^k_{n+1} (\Sd \partial^{h+1}) & \text{ (by equation \eqref{eq:Moss4})} \\
 & = & x &\text{( $Px \in J^k_{n+1}$ and $x=d_{h+1}Px$)}\\
\end{array} \]

Hence $x \in J^k_n$.

\item $x r^k_n= x j^{k-1}_n r^k_n$ is degenerated because of equation \eqref{eq:Moss5}

\item Let $k$ such that $x \in J^k_n - J^{k-1}_{n}$, then $Px = x r^k_n = x j^k_n r^k_n$ and $Px \in J^k_{n+1} - J^{k-1}_{n+1}$ because of point (\ref{Lem:KeyToExUnitPstructure:xPxSameLevel}), hence $P^2 x = x r^k_n r^k_{n+1} = x j^k_n r^k_n r^k_{n+1} $ which is degenerated because of equation \eqref{eq:Moss7}.

\item  Equation \eqref{eq:Moss9} and \eqref{eq:Moss10} show that if $x$ is degenerated then $Px$ is degenerated. If $x \in X$, i.e. $x \in J^0_n$ then $Px = xr^0_n$ but $r^0_n = \Sd \sigma_0$ so $Px$ is degenerated.

It follows that if $x$ is of type $\I$, then $x = y r^k_n$ with $y \in J^k_n$ if $y \in J^{k-1}_n$ then $x$ is degenerated because of point (\ref{Lem:KeyToExUnitPstructure:jk-1rk_degenerated}) hence $Px$ is degenerated because of the first part of the present point, if $ y \notin J^{k-1}_n$ then $x = Py$ and hence $Px$ is degenerated because of point (\ref{Lem:KeyToExUnitPstructure:P2Degenerated}).

\item This follows immediately from equation \eqref{eq:Moss2} as $d_i(Px)=x j^k_n r^k_n (\Sd \partial^i)$.

\item For $k+1< i \leqslant n+1$ on has:

\[\begin{array}{r c l r}
j^k_n r^k_n (\Sd \partial^i_{n+1}) &= &  j^k_n (\Sd \partial^{i-1}_n) r^k_{n-1}  & \text{by equation \eqref{eq:Moss3}}\\
&=& j^k_n (\Sd \partial^{i-1}_n) j^k_{n-1} r^k_{n-1}& \text{by equation \eqref{eq:Moss4}}
\end{array}\]

This equations shows that for $x \in J^k_{n}$,  $d_i Px$ is of the form $y r^k_{n-1}$ for $y \in J^k_{n-1}$, namely $y=x (\Sd \partial^{i-1}) j^k_{n-1}$,  hence, if $d_i Px$ is non-degenerated, it is of type $\I$. 

\item We have shown in \ref{Lem:KeyToExUnitPstructure:PTIouDegenIsdegen} that if $x$ is type $\I$ then $Px$ is degenerated. Conversely let $x$ be a non-degenerated cell such that $Px$ is degenerated. Let $k$ be such that $x \in J^k_n - J^{k-1}_n$. One has $x = d_{k+1} Px$ by point \ref{Lem:KeyToExUnitPstructure:d_k+1P x =x} of the lemma, hence $d_{k+1} Px$ is non-degenerated, which means that $Px$ can only be $\sigma_k$-degenerated or $\sigma_{k+1}$-degenerated (otherwise $d_{k+1}PX$ would also be degenerated). If $Px$ is $\sigma_k$-degenerated then $d_k Px = d_{k+1} Px = x$, but by point \ref{Lem:KeyToExUnitPstructure:FacesofPx_smaller_than_k} of the lemma $d_k Px \in J^{k-1}_n$ so this is impossible. If $Px$ is $\sigma_{k+1}$-degenerated then $d_{k+2} Px = d_{k+1}Px = x$ hence point \ref{Lem:KeyToExUnitPstructure:FacesofPx_bigger_than_k+1} shows that $x$ is of type $\I$.

\end{enumerate}

\end{proof}

\begin{npar} \label{proof_of_prop_XtoExyXisAnodyne} We are now ready to prove proposition \ref{prop_XtoExyXisAnodyne}:
\begin{proof}

The goal is to show that the type $\I$ cell and the operation $P$ we have defined satisfies the condition of \ref{def:P_structure}, so that the map is anodyne because of \ref{lem:PstructurImpAnodyne}.

Point (\ref{Lem:KeyToExUnitPstructure:TypeI=P_is_degen}) of lemma \ref{Lem:KeyToExUnitPstructure} (combined with lemma \ref{lem:Decidability_for_ExX}) shows that being a type $\I$ cell is decidable. So one can indeed defines type $\II$ cells as the cells that are not of type $\I$ (and non-degenerate nor in the domain) and get a partition of the non-degenerate cells. It also follows from point  (\ref{Lem:KeyToExUnitPstructure:TypeI=P_is_degen}) that if $x$ is a type $\II$ cell then $Px$ is a non-degenerate cell, and it is type $\I$ (either by definition or because of point (\ref{Lem:KeyToExUnitPstructure:P2Degenerated}) ).
Finally, point (\ref{Lem:KeyToExUnitPstructure:xPxSameLevel}) show that $P$ preserve the $k$ such that $x \in J^k_n$, as $X \subset \Ex_Y X$ corresponds to $J^0_n$ it shows that $P$ never send cell to cell in $X$. So $P$ restricts into a function from type $\II$ cells to type $\I$ cells.

We now show that it is a bijection:

If $x$ is a type $\I$ cell than it can be written as $y r^k_n$ with $y \in J^k_n$. 
By point (\ref{Lem:KeyToExUnitPstructure:jk-1rk_degenerated}) of lemma \ref{Lem:KeyToExUnitPstructure}, if $y \in J^{k-1}_n$, then $x=y r^k_n$ is degenerated, hence $y \notin J^{k-1}_n$ and hence $x=Py$. By point (\ref{Lem:KeyToExUnitPstructure:PTIouDegenIsdegen}) of lemma \ref{Lem:KeyToExUnitPstructure} if $y$ is degenerated or type $\I$ then $x=Py$ is degenerated, hence $y$ is a type $\II$ cell. This proves the surjectivity of $P$.

If $x$ is a type $\II$ cell and $y= Px$, then $x=d_{k+1} Px$ (because of point \ref{Lem:KeyToExUnitPstructure:d_k+1P x =x} of lemma \ref{Lem:KeyToExUnitPstructure}) where $k$ can be characterized as the unique integer such that $y \in J^k_{n+1}-J^{k-1}_{n+1}$ (because of point \ref{Lem:KeyToExUnitPstructure:xPxSameLevel} of lemma \ref{Lem:KeyToExUnitPstructure}). Hence $P$ is injective on type $\II$ cell and this concludes the proof that $P$ is a bijection between non-degenerated type $\II$ cells and non-degenerated type $\I$ cells.

Finally if $x$ is a non-degenerate type $\II$ cell, and let $k$ such that $x \in J^k_n -J^{k-1}_n$. Point (\ref{Lem:KeyToExUnitPstructure:d_k+1P x =x}) of lemma \ref{Lem:KeyToExUnitPstructure} shows that $d_{k+1} (Px)=x$, while point (\ref{Lem:KeyToExUnitPstructure:FacesofPx_smaller_than_k}) and  (\ref{Lem:KeyToExUnitPstructure:FacesofPx_bigger_than_k+1}) shows that for all $i \neq k+1$, $d_i Px$ is either in $J^{k-1}_n$, type $\I$ or degenerated, hence always distinct from $x$. So there is indeed a unique $i$ such that $d_i P x =x$, and it is $k+1$.

It remains to proves the ``well-foundness'' or ``finite height'' condition. It follows from point (\ref{Lem:KeyToExUnitPstructure:FacesofPx_smaller_than_k}) and (\ref{Lem:KeyToExUnitPstructure:FacesofPx_bigger_than_k+1}) of lemma \ref{Lem:KeyToExUnitPstructure} that given $x \in J^k_n - J^{k-1}_n$ a non-degenerate type $\II$ cell, $Ant_{\II}(x) \subset J^{k-1}_n$. In particular, any cell $x \in J^k_n$ has weak $P$-height at most $k$, hence by lemma \ref{lem:Finite_weak_Pheight_is_enough} this shows that every cell has finite $P$-height and hence concludes the proof.

\end{proof}\end{npar}

\begin{cor} \label{cor:X->ExinftyX_Anodyne}
For any $f:X \rightarrow Y$ with $X$ cofibrant, the morphism:

\[ X \rightarrow \Ex^{\infty} X \fprod_{\Ex^{\infty} Y } Y   \]

Is strongly anodyne.

\end{cor}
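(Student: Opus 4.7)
The plan is to exhibit $X \to \Ex^\infty X \fprod_{\Ex^\infty Y} Y$ as an $\omega$-transfinite composition of strongly anodyne maps, and then invoke the closure of strongly anodyne morphisms under such compositions (stated after lemma~\ref{lem:PstructurImpAnodyne}).

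Define a tower inductively by $Z_0 = X$ (with its given structure map to $Y$) and $Z_{n+1} = \Ex Z_n \fprod_{\Ex Y} Y$ (with its projection to $Y$); take $Z_n \to Z_{n+1}$ to be the map produced by proposition~\ref{prop_XtoExyXisAnodyne} applied to the structure map $Z_n \to Y$. A simple induction shows that every $Z_n$ is cofibrant and every $Z_n \to Z_{n+1}$ is strongly anodyne: the base case is the hypothesis on $X$, and in the inductive step the proposition (applicable thanks to cofibrancy of $Z_n$) yields a strongly anodyne map, which is in particular a cofibration, so $Z_{n+1}$ inherits cofibrancy.

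Next I would identify this tower with the tower $T_n := \Ex^n X \fprod_{\Ex^n Y} Y$ by induction on $n$. The key ingredient is that $\Ex$, being a right adjoint, preserves the defining pullback: assuming $Z_n \cong T_n$, one obtains $\Ex Z_n \cong \Ex^{n+1} X \fprod_{\Ex^{n+1} Y} \Ex^n Y$, whence
\[ Z_{n+1} = \Ex Z_n \fprod_{\Ex Y} Y \cong \bigl(\Ex^{n+1} X \fprod_{\Ex^{n+1} Y} \Ex^n Y\bigr) \fprod_{\Ex Y} Y \cong \Ex^{n+1} X \fprod_{\Ex^{n+1} Y} Y = T_{n+1}, \]
the last step using the factorisation $Y \to \Ex Y \to \Ex^n Y$. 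Naturality of the map $n : Id \to \Ex$ ensures that under these identifications the map $Z_n \to Z_{n+1}$ from the proposition corresponds to the canonical transition $T_n \to T_{n+1}$.

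Finally, since $\widehat{\Delta}$ is a topos, filtered colimits commute with finite limits, giving
\[ \colim_n T_n \cong \bigl(\colim_n \Ex^n X\bigr) \fprod_{\colim_n \Ex^n Y} Y = \Ex^\infty X \fprod_{\Ex^\infty Y} Y. \]
Hence $X \to \Ex^\infty X \fprod_{\Ex^\infty Y} Y$ is the $\omega$-transfinite composition of the strongly anodyne maps $Z_n \to Z_{n+1}$, and therefore is itself strongly anodyne. The only non-formal piece is the identification of the two towers via preservation of pullbacks by $\Ex$; I do not expect this to be an obstacle, as everything else is formal from proposition~\ref{prop_XtoExyXisAnodyne} and the closure properties already established.
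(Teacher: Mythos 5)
Your argument is correct and is essentially the paper's own proof: both iterate Proposition~\ref{prop_XtoExyXisAnodyne} through the functor $\Ex_Y(-) = \Ex(-) \fprod_{\Ex Y} Y$, identify the resulting tower with $\Ex^k X \fprod_{\Ex^k Y} Y$ using that $\Ex$ (a right adjoint) preserves the pullback, keep track of cofibrancy of the stages so the proposition applies at each step, and pass to the colimit via commutation of directed colimits with finite limits. The only blemish is a slip in your identification step: $\Ex Z_n$ should be $\Ex^{n+1} X \fprod_{\Ex^{n+1} Y} \Ex Y$ rather than $\Ex^{n+1} X \fprod_{\Ex^{n+1} Y} \Ex^n Y$, after which the simplification to $T_{n+1}$ goes through exactly as in the paper because the map to $\Ex Y$ used in the outer fiber product is the second projection.
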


\begin{proof}

Consider $\Ex^k X \fprod_{\Ex^k Y } Y \rightarrow Y$ and apply the functor $\Ex_Y$ to it. One obtains:

\[\begin{array}{ r c l} \Ex_Y  \left( \Ex^k X \fprod_{\Ex^k Y } Y \right) &=&  \Ex  \left( \Ex^k X \fprod_{\Ex^k Y } Y \right) \fprod_{\Ex Y} Y \\
 &=& \left( \Ex^{k+1}  X \fprod_{\Ex^{k+1} Y} \Ex Y \right) \fprod_{\Ex Y} Y
\end{array} \]

In the last terms the map from the term  $\left( \Ex^{k+1}  X \fprod_{\Ex^{k+1} Y} \Ex Y \right)$ to $\Ex Y$ used in the fiber product is just the second projection, so the fiber product simplifies to:

\[  \Ex_Y  \left( \Ex^k X \fprod_{\Ex^k Y } Y \right) = \Ex^{k+1} X \fprod_{\Ex^{k+1} Y } Y \]

And the natural map $ \Ex^k X \fprod_{\Ex^k Y } Y \rightarrow \Ex_Y  \left( \Ex^k X \fprod_{\Ex^k Y } Y \right) $ corresponds through this identification to just:

\[ n_{\Ex^k X} \fprod_{n_{\Ex^k Y}} Id_Y : \Ex^k X \fprod_{\Ex^k Y } Y \rightarrow \Ex^{k+1} X \fprod_{\Ex^{k+1} Y } Y \]

It follows by induction that the sequence of maps:

\[ X \rightarrow \Ex X \fprod_{\Ex Y } Y \rightarrow \dots \rightarrow \Ex^k X \fprod_{\Ex^k Y } Y \rightarrow \Ex^{k+1} X \fprod_{\Ex^{k+1} Y } Y \rightarrow \dots \]

are all strong anodyne maps (and all these objects are cofibrant), and the map $X \rightarrow \Ex^\infty X \fprod_{\Ex^\infty Y } Y$ is their transfinite composite (this last claim can either be observed very explicitly, or formally by commutation of directed colimits with finite limits).

\end{proof}

\subsection{Applications}
\label{subsec:AppOfExInf}

\begin{prop} \label{prop:Kan=Strong}
Kan fibration are the same as the strong fibrations of definition \ref{def:Simplicial_weak_equivalence}.
Dually, the trivial cofibrations of definition \ref{def:Simplicial_weak_equivalence}  are the same as anodyne morphisms.
\end{prop}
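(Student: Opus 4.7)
The plan is to prove the statement in the form ``every trivial cofibration is anodyne''. Together with the reverse inclusions already recorded in Remark~\ref{rk:def_WE_fibrations}, this will also yield Kan fibration $=$ strong fibration, since by definition Kan fibrations have the right lifting property against anodyne morphisms. So let $i: A \hookrightarrow B$ be a trivial cofibration, and factor it via the small object argument as $A \xrightarrow{j} C \xrightarrow{q} B$, with $j$ strongly anodyne and $q$ a Kan fibration; by 2-out-of-3 for equivalences, $q$ is an equivalence as well. By Lemma~\ref{lem:retract}, it will be enough to produce a diagonal filler for the square
\[
\begin{tikzcd}
A \ar[r,"j"] \ar[d,"i"] & C \ar[d,"q"] \\
B \ar[r,equal] \ar[ur,dotted] & B,
\end{tikzcd}
\]
because such a filler exhibits $i$ as a retract of the strongly anodyne morphism $j$, hence as anodyne.

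The core construction, valid when $A$ is cofibrant (which forces $B$ and $C$ to be cofibrant, since each is the codomain of a cofibration from a cofibrant source), uses Corollary~\ref{cor:X->ExinftyX_Anodyne} applied to $q$. It yields a factorization $C \overset{\sim}{\to} P := \Ex^{\infty} C \fprod_{\Ex^{\infty} B} B \xrightarrow{\pi} B$, where the first map is strongly anodyne and $\pi$ is the pullback of $\Ex^{\infty} q$ along the unit $B \to \Ex^{\infty} B$. Under the cofibrancy hypothesis, Proposition~\ref{prop:Ex_infty_Preserves_Fib} and Corollary~\ref{cor:Ex_infty_fibrant} imply that $\Ex^{\infty} q$ is a Kan fibration between Kan complexes, and hence that $\pi$ is a Kan fibration. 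Since $q$ is itself a Kan fibration, Lemma~\ref{lem:retract} realizes $q$ as a retract of $\pi$, so it suffices to lift $i$ against $\pi$. By the universal property of the pullback, this reduces to a lifting problem for $\Ex^{\infty} q$ against $i$ compatible with $B \to \Ex^{\infty} B$, and since $\Ex^{\infty} q$ is a Kan fibration between Kan complexes, the lift is produced by Lemma~\ref{Lem:Cof+eq=trivCof_and_=anodyne_if_fibranttarget}(i).

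The hard part will be removing the cofibrancy hypothesis on $A$: a general trivial cofibration $i: A \to B$ can have arbitrary source, while the $\Ex^{\infty}$-based construction above requires cofibrancy to invoke Corollary~\ref{cor:X->ExinftyX_Anodyne}, Proposition~\ref{prop:Ex_infty_Preserves_Fib}, and Corollary~\ref{cor:Ex_infty_fibrant}. My plan is to reduce the general case to the cofibrant one by cofibrant replacement: choose a trivial fibration $A^c \twoheadrightarrow A$ with $A^c$ cofibrant (from the small object argument on $\emptyset \to A$), factor the composite $A^c \to A \to B$ as a cofibration followed by a trivial fibration $A^c \hookrightarrow B^\flat \twoheadrightarrow B$, and observe that $A^c \to B^\flat$ is a cofibration between cofibrants that is an equivalence by 2-out-of-3 applied to the trivial fibrations $A^c \twoheadrightarrow A$ and $B^\flat \twoheadrightarrow B$. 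The cofibrant case then gives that $A^c \to B^\flat$ is anodyne. Propagating anodynicity back to $i$ itself is the delicate step, and I plan to handle it by rerunning the $\Ex^{\infty}$ lifting argument on the cofibrantly-replaced factorization of $i$ and then arguing by compatibility that the resulting lift descends; the key technical point is arranging the diagonal filler produced in the $\Ex^{\infty}$ layer to be compatible with the trivial fibrations $A^c \twoheadrightarrow A$ and $B^\flat \twoheadrightarrow B$ so that descent to the original square is possible.
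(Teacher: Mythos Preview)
Your argument for the cofibrant case is correct and is essentially the paper's argument viewed from the dual side: both reduce, via the factorization $C \to \Ex^{\infty} C \times_{\Ex^{\infty} B} B \to B$ from Corollary~\ref{cor:X->ExinftyX_Anodyne}, to lifting against a Kan fibration between Kan complexes, where Lemma~\ref{Lem:Cof+eq=trivCof_and_=anodyne_if_fibranttarget} applies. The paper phrases this as ``a Kan fibration between cofibrants is a strong fibration'' and then invokes the definition of strong fibration; you instead lift the specific trivial cofibration $i$ directly. Either way, the cofibrant case is fine.

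The general case, however, has a genuine gap. Your plan is to pass to a cofibrant replacement $A^c \hookrightarrow B^\flat$ of $i$, conclude that this replacement is anodyne, and then ``descend''. But the descent step is not an argument, and I do not see how to make it one. Concretely: knowing that $A^c \hookrightarrow B^\flat$ is anodyne lets you produce a lift $B^\flat \to C$ in the square
\[
\begin{tikzcd}
A^c \ar[r] \ar[d,hook,"\sim"] & C \ar[d,"q"] \\
B^\flat \ar[r] & B,
\end{tikzcd}
\]
but you need a lift $B \to C$, and the trivial fibration $B^\flat \twoheadrightarrow B$ points the wrong way for this. There is no reason the map $B^\flat \to C$ should factor through $B$, and no obvious way to force it to. Your phrase ``arranging the diagonal filler \dots\ to be compatible with the trivial fibrations so that descent is possible'' is a hope, not a mechanism.

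The paper avoids this obstacle by switching to the fibration side. The key intermediate step is: \emph{every Kan fibration that is an equivalence is a trivial fibration}. This is proved by testing against the generating cofibrations $\partial\Delta[n] \hookrightarrow \Delta[n]$, whose domain and codomain are cofibrant. Given such a lifting problem against a Kan fibration $f:X \to Y$, one factors the horizontal maps through cofibrant objects (first $\Delta[n] \hookrightarrow Z \overset{\sim}{\twoheadrightarrow} Y$, then pull back $f$, then factor the top map similarly), producing a Kan fibration between cofibrants that is still an equivalence. The cofibrant case then shows this is a strong fibration, hence a trivial fibration by Lemma~\ref{Lem:Cof+eq=trivCof_and_=anodyne_if_fibranttarget}(iv), and the original lift follows. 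Once this intermediate step is in hand, the general statement ``Kan fibration $=$ strong fibration'' follows by the same reduction as in the proof of Lemma~\ref{Lem:Cof+eq=trivCof_and_=anodyne_if_fibranttarget}(i): given a lifting problem of a trivial cofibration against a Kan fibration, factor the horizontal maps to make them equivalences, whence the Kan fibration becomes an equivalence by $2$-out-of-$3$ and hence a trivial fibration. The dual statement about trivial cofibrations and anodyne maps is then an immediate retract argument.

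The moral is that the asymmetry between the two sides matters: cofibrancy hypotheses are removed by exploiting that the \emph{generating cofibrations} live between cofibrant objects, which lets you reduce lifting problems on the fibration side to the cofibrant case. There is no analogous handle on the trivial-cofibration side.
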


The proof given here, at least the case of a Kan fibration between cofibrant object, is essentially the proof proposition 2.1.41 of \cite{cisinski2006prefaisceaux}.

\begin{proof}
We start with the first half: we observed in \ref{rk:def_WE_fibrations} that strong fibrations are Kan fibrations. So we only need to show that any Kan fibration is a strong fibration. We first show this claim for $p: A \twoheadrightarrow B$ a Kan fibration between cofibrant object. One has that $\Ex^{\infty}(f)$ is a Kan fibration (by \ref{prop:Sd_Ex_preserves_cof_fib}) between fibrant objects (because of \ref{cor:Ex_infty_fibrant}), hence it is a strong fibration (by lemma \ref{Lem:Cof+eq=trivCof_and_=anodyne_if_fibranttarget}.(iii)), in particular any pullback of $\Ex^{\infty}(f)$ is also a strong fibration. This gives a factorization of $p$:

\[
\begin{tikzcd}
A \ar[r,hook,"\sim"] \ar[dr,"p"{swap}] & \Ex^{\infty}(A) \times_{\Ex^{\infty}(B)} B \ar[r] \ar[dr,phantom,"\lrcorner"{very near start}] \ar[d,->>] & \Ex^{\infty} A \ar[d,->>,"\Ex^{\infty }p"] \\
&B \ar[r] & \Ex^{\infty}B 
\end{tikzcd}\]

in an anodyne map (by corollary \ref{cor:X->ExinftyX_Anodyne}) followed by strong fibration as a pullback of the strong fibration $\Ex^{\infty}(p)$. So $p$ is a retract of the strong fibration part by the retract lemma (\ref{lem:retract}) and hence is itself a strong fibration.

We now move to the case of a general Kan fibration. We first show that a Kan fibration that is also an equivalence is a trivial fibration. Let $p:X \rightarrow Y$ be such a Kan fibration and weak equivalence, one needs to show that it has the right lifting property against all boundary inclusion: $\partial \Delta[n] \hookrightarrow \Delta[n]$, consider such a lifting problem:

\[\begin{tikzcd}
  \partial \Delta[n] \ar[d,hook] \ar[r] & X \ar[d,two heads,"f"] \\
\Delta[n] \ar[r] & Y \\
\end{tikzcd} \]

One first factors the map $\Delta[n] \rightarrow Y$ as a cofibration followed by a trivial fibration and we form a pullback of $f$ along the fibration part to get a diagram:

\[\begin{tikzcd}
  \partial \Delta[n] \ar[d,hook] \ar[r,"u"] & P \ar[r, two heads,"\sim"] \ar[d,two heads,"f'"] & X \ar[d,two heads,"f"] \\
\Delta[n] \ar[r,hook] & Z  \ar[r, two heads,"\sim"]  & Y \\
\end{tikzcd}\]

By $2$-out-of-$3$ the new fibration $f'$ is again a weak equivalence, but note that now the object $Z$ is cofibrant. One can further factor $u$ in a cofibration followed by a trivial fibration:

\[\begin{tikzcd}
  \partial \Delta[n] \ar[d,hook] \ar[r,hook]& K \ar[dr,two heads,"f''"{swap}] \ar[r,two heads,"\sim"] & P \ar[r, two heads,"\sim"] \ar[d,two heads,"f'"] & X \ar[d,two heads,"f"] \\
\Delta[n] \ar[rr,hook] & & Z  \ar[r, two heads,"\sim"]  & Y \\
\end{tikzcd}\]

$f''$ is a Kan fibration between cofibrant objects, hence is a strong fibration by the first part of the proof, moreover it is an equivalence hence it is a trivial fibration by the last point of lemma \ref{Lem:Cof+eq=trivCof_and_=anodyne_if_fibranttarget}, and hence it has the right lifting property against the boundary inclusion which show that the morphism $f$ is a trivial fibration as well.

One can then concludes the proof by the same argument as used in the proof of the first part of lemma \ref{Lem:Cof+eq=trivCof_and_=anodyne_if_fibranttarget}: Given a lifting problem of a trivial cofibration against a Kan fibration one can, using appropriate factorization, reduce to the case where the top and bottom map of the lifting square are weak equivalences, in which case the Kan fibration is a weak equivalence by $2$-out-of-$3$ and hence is a trivial fibration by the claim we just made, and hence has the right lifting property against all cofibration which concludes the proof.

For the second half of the proposition, given a trivial cofibration $j$ one factors it as an anodyne morphisms followed by a Kan fibration. By the first half of the proof the Kan fibration is a strong fibration and hence has the right lifting property against $j$. It immediately follows from the retract lemma \ref{lem:retract} that $j$ is a retract of the anodyne morphism and hence is anodyne it self.

\end{proof}

\begin{prop} \label{prop:rightProperness} The model structure of \ref{th:Simplicial_Quillen_MS_1} is right proper, i.e. the pullback of a weak equivalence along a fibration is again a fibration.
\end{prop}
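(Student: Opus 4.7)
The plan is to exploit the functor $\Ex^\infty$ to reduce the problem to a pullback of Kan complexes, and then handle that case by the standard ``right properness between fibrant objects'' argument, transporting the result back via the anodyne units $A \to \Ex^\infty A$ for cofibrant $A$.

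First, I would reduce to the case where $X$, $Y$, $B$ are all cofibrant. Take a cofibrant replacement $Y^c \twoheadrightarrow Y$ (a trivial fibration from a cofibrant object, obtained by applying the cofibration/trivial-fibration weak factorization system to $\emptyset \to Y$) and pull back the whole pullback square along $Y^c \to Y$; the connecting maps to the original are pullbacks of a trivial fibration, hence trivial fibrations and equivalences, so the new top map is an equivalence iff $p'$ is. Iterating similarly with cofibrant replacements of $X$ and $B$ over the now-cofibrant $Y^c$, we arrive at an equivalent pullback with $X,Y,B$ cofibrant. Then $P = B\times_Y X$ is also cofibrant, using the Eilenberg--Zilber analysis (decidability of degeneracy in a fiber product of cofibrant simplicial sets along a Kan fibration).

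Now assume $X, Y, B$ are cofibrant. Apply $\Ex^\infty$: since $\Ex$ is a right adjoint and $\Ex^\infty$ is a filtered colimit commuting with finite limits in the topos $\widehat{\Delta}$, $\Ex^\infty$ preserves the pullback. The resulting square is a pullback of Kan complexes (cor~\ref{cor:Ex_infty_fibrant}) in which $\Ex^\infty p$ is a Kan fibration (prop~\ref{prop:Ex_infty_Preserves_Fib}, hence a strong fibration by prop~\ref{prop:Kan=Strong}), and $\Ex^\infty w$ is an equivalence by $2$-out-of-$3$ on the naturality square, using the anodyne units $B \to \Ex^\infty B$ and $Y \to \Ex^\infty Y$ from cor~\ref{cor:X->ExinftyX_Anodyne} (with terminal target). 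I then factor $\Ex^\infty w$ as an anodyne $\Ex^\infty B \to M$ followed by a Kan fibration $M \twoheadrightarrow \Ex^\infty Y$; since $\Ex^\infty Y$ is fibrant, so is $M$, and $M \to \Ex^\infty Y$ is a trivial fibration by lemma~\ref{Lem:Cof+eq=trivCof_and_=anodyne_if_fibranttarget}(iv). Pulling both factors back along $\Ex^\infty p$, the pullback of the trivial fibration is a trivial fibration (an equivalence), and the pullback of the anodyne along the resulting Kan fibration $M \times_{\Ex^\infty Y} \Ex^\infty X \to M$ is again anodyne; this last point reduces, via the preservation of colimits by pullback in the slice topos, to the fact that the pullback of a horn inclusion $\Lambda^k[n] \hookrightarrow \Delta[n]$ along a Kan fibration $E \to \Delta[n]$ is anodyne, which follows from the cartesian structure of prop~\ref{prop:pushoutProdCond} applied in the slice $\widehat{\Delta}/\Delta[n]$. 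By $2$-out-of-$3$, the map $\Ex^\infty p'$ is an equivalence.

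Finally, since $P$ and $X$ are cofibrant, cor~\ref{cor:X->ExinftyX_Anodyne} (terminal target) gives anodyne unit maps $P \to \Ex^\infty P$ and $X \to \Ex^\infty X$; applied to the naturality square relating $p'$ and $\Ex^\infty p'$, $2$-out-of-$3$ yields that $p'$ is an equivalence. The main obstacle is the claim used in the second paragraph that the pullback of an anodyne morphism along a Kan fibration is anodyne: this is the crux of right properness in any setting and must here be justified carefully from the cartesian structure in the slice rather than from right properness itself (which would be circular). The cofibrant reduction of step one also needs attention to confirm that it preserves the Kan-fibration and equivalence structure and that fiber products of cofibrant simplicial sets along Kan fibrations remain cofibrant.
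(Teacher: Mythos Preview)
Your overall architecture matches the paper's: reduce to the case of cofibrant objects, apply $\Ex^{\infty}$ to land among fibrant objects, verify right properness there, and transport back along the anodyne units. The discrepancy is in how you treat the step ``right properness among fibrant objects''.

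The paper dispatches this step by invoking the general fact, valid in any (weak) model category, that a pullback of a weak equivalence between fibrant objects along a fibration is a weak equivalence (it cites corollary~2.4.4 of \cite{henry2018weakmodel}, i.e.\ the Ken Brown argument). You instead factor $\Ex^{\infty} w$ as anodyne followed by trivial fibration and then need that the pullback of the anodyne part along a Kan fibration is anodyne. You justify this by appealing to ``the cartesian structure of prop~\ref{prop:pushoutProdCond} applied in the slice $\widehat{\Delta}/\Delta[n]$''. This is the gap: cartesianness of the model structure on $\widehat{\Delta}$ does \emph{not} imply cartesianness of the induced model structure on a slice. Indeed, the statement you need in the slice over $\Delta[n]$ is that $(\emptyset \to E) \corner{\times_{\Delta[n]}} (\Lambda^k[n] \to \Delta[n])$ is anodyne, i.e.\ that $E|_{\Lambda^k[n]} \to E$ is anodyne for any Kan fibration $E \to \Delta[n]$; but the pushout-product condition you have is for the cartesian product in $\widehat{\Delta}$, not for fiber products over $\Delta[n]$. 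The claim ``pullback of anodyne along a Kan fibration is anodyne'' is precisely the Frobenius property, which is essentially equivalent to right properness and cannot be extracted from prop~\ref{prop:pushoutProdCond} alone. So the argument as written is circular in exactly the way you worried it might be.

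The fix is easy and is what the paper does: once you are among fibrant objects, replace your direct factorization argument by the standard Ken Brown argument (factor the weak equivalence as a section of a trivial fibration followed by a trivial fibration; both pieces pull back to weak equivalences without ever invoking Frobenius). A minor remark: for the cofibrancy of $P$ in the cofibrant case, the paper simply observes that $P$ is a subobject of the cofibrant object $B \times X$, and any sub-simplicial-set of a cofibrant simplicial set is cofibrant; this is more direct than the Eilenberg--Zilber route you sketch.
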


\begin{proof}

We start with the case where all the objects in the pullback are cofibrant. This implies that the pullback itself is cofibrant because it is a subobject of the product which is cofibrant because of the cartesianess of the model structure \ref{prop:pushoutProdCond}, and the explicit description of cofibrant objects in terms of decidability of degeneratness of cell, immediately shows that a subobject of a cofibrant simplicial sets is cofibrant.

In this case, the result follows immediately from an application of Kan's $\Ex^{\infty}$ functor: It preserves the pullback square (because it is a right adjoint), it send each object to a fibrant object, when all the object are fibrant the result is true in any (weak) model category (a clearly constructive argument, valid in weak model category is given as corollary 2.4.4 in \cite{henry2018weakmodel}), and it detect equivalences between cofibrant objects because the morphism $X \rightarrow \Ex^{\infty} X$ is anodyne (hence an equivalence) for $X$ cofibrant.

It appears that having right properness when all the objects are cofibrant is sufficient to deduce the general case by taking cofibrant replacement of all the objects involved in the appropriate order: Given a pullback $P = B \times_A C$ one constructs cofibrant replacement of $B^c \overset{\sim}{\twoheadrightarrow} B, \dots $ which still form a diagram such that the comparison maps $ B^c \times_{A^c} C^c \rightarrow  B \times_A C$ is again a trivial fibrations. This is achieved by constructing first $A^c$ and then defining $B^c$ and $C^c$ respectively as cofibrant replacement of the pullbacks $B \times_A A^c$ and $C \times_A A^c$. Assuming moreover that $B \twoheadrightarrow A$ is a fibration one also obtains this way that $B^c \twoheadrightarrow A^c$ is a fibration. Once this is done one deduces immediately the result in the general case from the result for the cofibrant replacement.

\end{proof}

\bibliography{/home/henry/Documents/Latex/Biblio}{}

\begin{thebibliography}{10}

\bibitem{Aczel2010CST}
P.~Aczel and M.~Rathjen.
\newblock Notes on constructive set theory.
\newblock Available from \url{http://www1.maths.leeds.ac.uk/~rathjen/book.pdf},
  2010.

\bibitem{barwick2010left}
Clark Barwick.
\newblock On left and right model categories and left and right {B}ousfield
  localizations.
\newblock {\em Homology, Homotopy and Applications}, 12(2):245--320, 2010.

\bibitem{cisinski2006prefaisceaux}
Denis-Charles Cisinski.
\newblock {\em Les pr{\'e}faisceaux comme mod{\`e}les des types d'homotopie}.
\newblock Soci{\'e}t{\'e} math{\'e}matique de France, 2006.

\bibitem{Gambino2019simplicial}
Nicola Gambino and Simon Henry.
\newblock Towards a constructive simplicial model of {U}nivalent {F}oundations.
\newblock {\em {To appears on ArXiv}}, 2019.

\bibitem{garner2009understanding}
Richard Garner.
\newblock Understanding the small object argument.
\newblock {\em Applied categorical structures}, 17(3):247--285, 2009.

\bibitem{henry2018weakmodel}
Simon Henry.
\newblock Weak model categories in constructive and classical mathematics.
\newblock {\em {ArXiv preprint, ArXiv:1807.02650v2}}, 2018.

\bibitem{joyal2006quasi}
Andr{\'e} Joyal and Myles Tierney.
\newblock Quasi-categories vs segal spaces.
\newblock {\em ArXiv preprint math/0607820}, 2006.

\bibitem{joyal2008notes}
Andr{\'e} Joyal and Myles Tierney.
\newblock Notes on simplicial homotopy theory.
\newblock {\em Preprint}, 2008.

\bibitem{kan1957css}
Daniel~M Kan.
\newblock On css complexes.
\newblock {\em American Journal of Mathematics}, 79(3):449--476, 1957.

\bibitem{maietti2012induction}
Maria~Emilia Maietti and Steven Vickers.
\newblock An induction principle for consequence in arithmetic universes.
\newblock {\em Journal of Pure and Applied Algebra}, 216(8-9):2049--2067, 2012.

\bibitem{makkai2014cellular}
Michael Makkai and Ji\v{r}\'i Rosick{\'y}.
\newblock Cellular categories.
\newblock {\em Journal of Pure and Applied Algebra}, 218(9):1652--1664, 2014.

\bibitem{moss2015another}
Sean Moss.
\newblock Another approach to the kan-quillen model structure.
\newblock {\em arXiv preprint arXiv:1506.04887}, 2015.

\bibitem{palmgren2007partial}
Erik Palmgren and Steven~J Vickers.
\newblock Partial horn logic and cartesian categories.
\newblock {\em Annals of Pure and Applied Logic}, 145(3):314--353, 2007.

\end{thebibliography}
\bibliographystyle{plain}

\end{document}